
\documentclass[12pt]{amsart}

\addtolength{\oddsidemargin}{-.875in}
	\addtolength{\evensidemargin}{-.875in}
	\addtolength{\textwidth}{1.75in}

	\addtolength{\topmargin}{-.875in}
	\addtolength{\textheight}{1.75in}

\usepackage{subfig}

\usepackage[bbgreekl]{mathbbol}

\usepackage{amssymb,amsfonts,amsmath}

\usepackage{graphicx}
\usepackage{amssymb}
\usepackage{epstopdf}

\usepackage[all,cmtip]{xy}

\DeclareGraphicsRule{.tif}{png}{.png}{`convert #1 `dirname #1`/`basename #1 .tif`.png}

\newcommand{\iinfty}{{\mathchoice
{\begin{minipage}{.15in}\includegraphics[width=.12in]{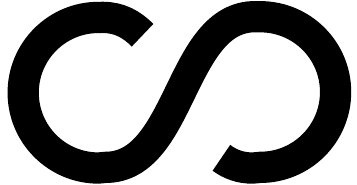}\end{minipage}}
{\begin{minipage}{.10in}\includegraphics[width=.10in]{infty2.pdf}\end{minipage}}
{\begin{minipage}{.08in}\includegraphics[width=.08in]{infty2.pdf}\end{minipage}}
{\begin{minipage}{.08in}\includegraphics[width=.08in]{infty2.pdf}\end{minipage}}
}}



\theoremstyle{plain}
\newtheorem{thm}{Theorem}
\newtheorem{cor}[thm]{Corollary}
\newtheorem{lem}[thm]{Lemma}
\newtheorem{conj}[thm]{Conjecture}
\newtheorem{prop}[thm]{Proposition}
\theoremstyle{definition}
\newtheorem{defn}[thm]{Definition}
\newtheorem{rem}[thm]{Remark}


\renewcommand{\int}{\operatorname{int}}
\newcommand{\sign}{\operatorname{sign}}
\newcommand{\Ker}{\operatorname{Ker}}

\newcommand{\M}{\operatorname{M}}

\newcommand{\Arf}{\operatorname{Arf}}
\newcommand{\Bing}{\operatorname{Bing}}

\newcommand\W{\text{\sf W}}

\newcommand\sL{\text{\sf L}}
\newcommand\sD{\text{\sf D}}
\newcommand\sK{\text{\sf K}}
\newcommand\sY{\text{\sf Y}}

\newcommand{\Z}{\mathbb{Z}}
\newcommand{\N}{\mathbb{N}}

\newcommand{\bM}{\mathbb{M}}
\newcommand{\bW}{\mathbb{W}}
\newcommand{\bL}{\mathbb{L}}

\newcommand{\cT}{\mathcal{T}}
\newcommand{\cV}{\mathcal{V}}
\newcommand{\cW}{\mathcal{W}}

\newcommand{\sra}{\twoheadrightarrow}

\begin{document}

\title{Milnor invariants and twisted Whitney towers}

\begin{abstract}
This paper describes the relationship between the first non-vanishing {\em Milnor invariants} of a classical link and the intersection invariant of a \emph{twisted Whitney tower}. This is a certain 2-complex in the $4$--ball, built from immersed disks bounded by the given link in the $3$--sphere together with finitely many `layers' of Whitney disks.

The \emph{intersection invariant} is a higher-order generalization of the intersection number between two immersed disks in the $4$--ball, well known to give the linking number of the link on the boundary, which measures intersections among the Whitney disks and the disks bounding the given link, together with information that measures the twists (framing obstructions) of the Whitney disks. 

This interpretation of Milnor invariants as higher-order intersection invariants plays a key role in our classifications \cite{CST0,WTCCL} of both the framed and twisted Whitney tower filtrations on link concordance. Here we show how to realize the \emph{higher-order Arf invariants}, 
which also play a role in the classifications, and derive new geometric characterizations of links with vanishing length $\leq 2k$ Milnor invariants.
\end{abstract}

\author[J. Conant]{James Conant}
\email{jconant@math.utk.edu}
\address{Dept. of Mathematics, University of Tennessee, Knoxville, TN}

\author[R. Schneiderman]{Rob Schneiderman}
\email{robert.schneiderman@lehman.cuny.edu}
\address{Dept. of Mathematics and Computer Science, Lehman College, City University of New York, Bronx, NY}

\author[P. Teichner]{Peter Teichner}
 \email{teichner@mac.com}
\address{Dept. of Mathematics, University of California, Berkeley, CA and}
\address{Max-Planck-Institut f\"ur Mathematik, Bonn, Germany}

\keywords{Milnor invariants, Whitney towers, twisted Whitney disk, link concordance, higher-order Arf invariants, trees, gropes, $k$-slice}

\maketitle

\section{Introduction}\label{sec:intro}
In his early work \cite{M1,M2}, John Milnor showed how to extract invariants of classical links from nilpotent quotients of the link group. 
Roughly speaking, Milnor observed that the linking numbers $\mu_L(i,j)\in \Z$ of an oriented link 
$L=L_1\cup L_2\cup\cdots\cup L_m\subset S^3$  
vanish if and only if the link group $\pi_1(S^3 \smallsetminus L)$ is isomorphic {\em modulo 3-fold commutators} to the free group on $m$ generators (the link group of the trivial link). Using this isomorphism, Milnor defined his triple invariants $\mu_L(i,j,k)\in \Z$ which vanish if and only if $\pi_1(S^3 \smallsetminus L)$ is free {\em modulo 4-fold commutators}. Iterating, he obtained a filtration on the set $\bL$ of oriented links in the $3$--sphere:
$$
\dots \subseteq \bM_{3} \subseteq \bM_{2} \subseteq \bM_{1} \subseteq \bL,
$$
where a link $L$ lies in $\bM_n$ if and only if  for all $k\leq n$ the Milnor invariants $\mu_L(i_0,i_1,\dots,i_k)\in\Z$ are defined and vanish. This in turn is equivalent to $\pi_1(S^3 \smallsetminus L)$ being free modulo $(n+2)$-fold commutators, and then the next set of Milnor invariants of {\em length $(n+2)$} are defined (via Magnus expansions of the longitudes, thought of as elements in the free group). We refer to Section~\ref{subsec:intro-Milnor-review} for a precise definition and some history on Milnor invariants. 

In this paper, we will provide a geometric interpretation of Milnor invariants in terms of the intersection invariants of \emph{twisted Whitney towers}. These are certain 2-complexes in the $4$--ball, built on immersed disks bounded by the link $L$ by recursively adding layers of Whitney disks which pair intersections among the previous layers. The intersection invariants measure ``higher-order intersections'' among the Whitney disks, as well as certain relative Euler numbers of their normal bundles, and the relationship with the Milnor invariants provides obstructions to ``raising the order'' of a Whitney tower (see Figures~\ref{fig:Borro-rings},~\ref{fig:Whitehead-122-tower},~\ref{fig:Whitehead-12infty-tower} and Section~\ref{sec:w-towers}). This will show precisely how Milnor invariants are related to the failure of the \emph{Whitney move} \cite{Wh}.

As observed in \cite{KM}, the general failure of the Whitney move in smooth $4$--manifolds goes back to Rohlin's theorem \cite{Ro}. It was dramatically underlined by Donaldson's restrictions on the intersection form of a smooth 4--manifold \cite{Do}. 
Freedman's celebrated recovery \cite{F} of the Whitney move in the settings of surgery and the s-cobordism theorem for topological $4$--manifolds with ``good'' fundamental group was built on a notion of infinitely iterated towers of disks pioneered by Casson \cite{Ca2}. 


In \cite{WTCCL} we studied the {\em twisted Whitney tower filtration}: 
$$
\dots \subseteq \bW^\iinfty_{3} \subseteq \bW^\iinfty_{2} \subseteq \bW^\iinfty_{1} \subseteq \bL,
$$
where a link $L$ lies in $\bW^\iinfty_n$ if and only if $L$ bounds a twisted Whitney tower $\cW$ in the $4$--ball which is of {\em order $n$}, meaning very roughly that $\cW$ consists of $n$ layers of Whitney disks on top of the immersed disks bounding the components of $L$; for details see Section~\ref{sec:w-towers}. 
The ``twist'' symbol $\iinfty$ in our notation stands for the fact that some of the Whitney disks in a twisted Whitney tower are allowed to be \emph{twisted} (i.e.~allowed to have non-zero relative normal Euler number), as opposed to being \emph{framed}.

\begin{thm}\label{thm:pre-Milnor}
There is an inclusion $ \bW^\iinfty_{n}\subseteq \bM_n$ of filtrations. Moreover, all length $(n+2)$ Milnor invariants of $L\in \bW^\iinfty_{n}$ are defined and can be computed from the {\em intersection invariant} $\tau^\iinfty_n(\cW)$ of any order $n$ twisted Whitney tower $\cW$ bounded by $L$. 
\end{thm}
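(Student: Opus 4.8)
The plan is to deduce both assertions from a single explicit computation: expressing each longitude of $L\in\bW^\iinfty_n$ as a product of iterated commutators of meridians, read off from the tree data recorded by $\tau^\iinfty_n(\cW)$. Recall that the length $(n+2)$ Milnor invariants of $L$ are the degree $(n+1)$ coefficients of the Magnus expansions of the longitudes $\ell_1,\dots,\ell_m$, taken in $F/F_{n+2}$ with $F$ free on the meridians, and that they are defined precisely when all shorter invariants vanish, equivalently when the $\ell_i$ lie in $\pi_1(S^3\smallsetminus L)_{n+1}$. So it suffices to show: given an order $n$ twisted Whitney tower $\cW$ bounded by $L$, one has, modulo $(n+2)$-fold commutators of meridians, an equality of the shape $\ell_i \equiv \prod_p c_i(t_p)^{\pm 1}\cdot\prod_W d_i(W)$, where $c_i(t_p)$ is the iterated meridian commutator determined by the order $n$ tree $t_p$ of an unpaired intersection point $p$ rooted at an $i$-labeled leaf, the sign is $\sign(p)$, and $d_i(W)$ is an analogous term coming from each twisted Whitney disk $W$.

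To produce this expression I would induct on the order, exploiting the layered structure of $\cW$. An order $n$ twisted Whitney tower is in particular an order $n-1$ one, so the inductive hypothesis already puts the $\ell_i$ in $\pi_1(S^3\smallsetminus L)_n$ and identifies them modulo $(n+1)$-fold commutators; the new content at stage $n$ is the next commutator, detected exactly by the top layer of Whitney disks together with the unpaired intersections. Geometrically this is the familiar principle that a Whitney disk guiding a Whitney move on sheets with associated trees $I$ and $J$ exhibits, after pushing into the link complement, the corresponding commutator of the meridians labelling the leaves of $\langle I,J\rangle$; iterating along $t_p$ and collecting over all $p$ gives the product. Run at all orders, this computation also proves the inclusion $\bW^\iinfty_n\subseteq\bM_n$ and the definedness of the length $(n+2)$ invariants; alternatively that part follows by converting $\cW$ to a grope of class $n+1$ bounded by $L$ and invoking the standard relation between gropes and the lower central series.

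Next I would translate the commutator formula into the Magnus expansion. An $(n+1)$-fold iterated commutator of meridians $m_{i_1},\dots,m_{i_{n+1}}$ has Magnus expansion whose degree $(n+1)$ part is the corresponding Lie monomial in the $X_{i_1},\dots,X_{i_{n+1}}$, so summing the contributions of the $t_p$ and the twisted disks yields a sum of Lie monomials whose coefficients are, by definition, the length $(n+2)$ Milnor invariants of $L$. The bookkeeping is exactly the realization map $\eta\colon\cT^\iinfty_n\to\cD_n$ from the group of order $n$ trees onto the target group $\cD_n$ of the first non-vanishing length $(n+2)$ Milnor invariants, so the outcome is that $\eta\bigl(\tau^\iinfty_n(\cW)\bigr)$ records all these invariants of $L$; in particular this value is independent of the chosen $\cW$. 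For the twisted Whitney disks one observes that a disk $W_J$ with rooted tree $J$ and nonzero relative Euler number contributes, through its boundary circle (a band sum of meridians indexed by $J$), exactly as a Whitney disk pairing two parallel copies of itself would, i.e.\ via the doubled tree $\langle J,J\rangle$; the factors of two that appear are absorbed by the defining relations of $\cT^\iinfty_n$ (notably $2\,J^\iinfty=\langle J,J\rangle$), so $\eta$ is well defined on the twisted generators and the formula persists.

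The main obstacle is the geometric step: producing, coherently over all the Whitney disks and intersection points of $\cW$ at once, the loop in $S^3\smallsetminus L$ representing each longitude as an explicit product of meridian commutators matching the trees $t_p$, while controlling signs, orientations, base-point and tree-root choices, and — the subtlest point — the contributions of the twisted Whitney disks, where a framing defect must be reinterpreted through the disk's boundary. This is a refinement, to trees with repeated leaf-labels and in the presence of framing obstructions, of the classical ``grope implies iterated commutator'' calculation, and the delicate part is pinning down the constants so that the answer is literally Milnor's $\mu$ rather than some multiple of it.
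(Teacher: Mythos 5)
Your high-level target formula is right — the longitudes should be iterated meridian commutators read off from the trees of $\tau^\iinfty_n(\cW)$, the bookkeeping should be the map $\eta_n\colon\cT^\iinfty_n\to\sD_n$ summing over choices of root, and a twisted disk $W_J$ should contribute through its boundary exactly as the doubled tree $\langle J,J\rangle$, with the factors of two absorbed by the interior-twist relation. That is the shape of the paper's result (Theorem~\ref{thm:Milnor invariant}). But the mechanism you propose, a direct induction on the order of the Whitney tower in which the ``top layer'' of Whitney disks supplies the next commutator, has a genuine gap that you name yourself at the end and then do not fill: there is no explicit loop in $S^3\smallsetminus L$ exhibiting the longitude as that commutator word. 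A Whitney tower lives in $B^4$ and is not embedded; a Whitney move is a homotopy of the order-zero disks, not an isotopy of the link, so it does not act on $\pi_1(S^3\smallsetminus L)$ and does not ``push'' anything into the link complement. The heuristic ``a Whitney disk exhibits a commutator'' needs a precise geometric carrier, and the induction as stated does not produce one.

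The paper's resolution is the step you mention only as an afterthought for the easy inclusion, and then only partially. First, $\cW$ is converted to a twisted \emph{capped grope} $G^c$ of class $n+1$ by the tree-preserving surgery construction of Lemma~\ref{lem:twisted-tower-to-grope} (based on \cite{S1}); the grope is embedded away from its cap--bottom-stage intersections, and each branch literally contains the tree $t_p$. Second — and this is what your sketch omits entirely — one cannot simply compute in $\pi_1(B^4\setminus G^c)$: Lemma~\ref{lem:grope-duality} establishes, via an extension of grope duality \cite{KT} to twisted capped gropes together with Dwyer's theorem \cite{Dw}, that the inclusion induces an isomorphism $\pi_1(S^3\setminus L)/\pi_1(S^3\setminus L)_{n+2}\cong\pi_1(B^4\setminus G^c)/\pi_1(B^4\setminus G^c)_{n+2}$. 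Only then can one read the longitudes off the grope branches as in Section~\ref{subsec:computing-the-longitudes}, where parallel push-offs of bottom stages and dual tori to caps give the meridian commutators recursively, and where the observation that a $\pm1$-twisted cap has boundary push-off equal to $x_J^{\pm1}$ produces the ``reflection'' of $J$ in $\langle J,J\rangle$. Without the grope conversion there is nothing to push off, and without the Dwyer/duality lemma there is no justification for identifying the two nilpotent quotients; both are essential, not optional alternatives.
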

The second statement will be made precise in Theorem~\ref{thm:Milnor invariant}, which describes exactly how Milnor invariants are measured by the higher-order intersections and Whitney disk twistings that determine the invariant $\tau^\iinfty_n$. The theorem also works for an order $0$ twisted Whitney tower 
$\cW$, which by definition is just a union of immersed disks (which are oriented consistently with the orientation of $L$): $\tau_0^\iinfty(\cW)$ counts the transverse intersections between pairs of those disks -- well-known to equal the linking numbers $\mu_L(i,j)$ of the link $L$ -- and also detects the induced framings on the link components, considered as ``self-linking'' numbers $\mu_L(i,i)$ 
(see the proof of Theorem~\ref{thm:Milnor invariant} in Section~\ref{sec:Milnor-thm-proof}). 

Any $L$ in $\bM_{1}$ bounds an order $1$ twisted Whitney tower consisting of immersed disks $D_i$ together with Whitney disks pairing all intersections among the $D_i$. Matsumoto \cite{Ma} showed
that $\mu_L(i,j,k)$ can be computed from the interior intersections between the Whitney disks and the $D_i$ (see Section~\ref{subsec:main-proof-order-1} and Figure~\ref{order-one-longitude-A1-fig}). 
Figure~\ref{fig:Borro-rings} shows the easiest case where one can explicitly see why a Whitney move cannot be used to find 3 disjointly embedded disks in $B^4$ whose boundaries form the Borromean rings.
\begin{figure}[h]
\centerline{\includegraphics[scale=.43]{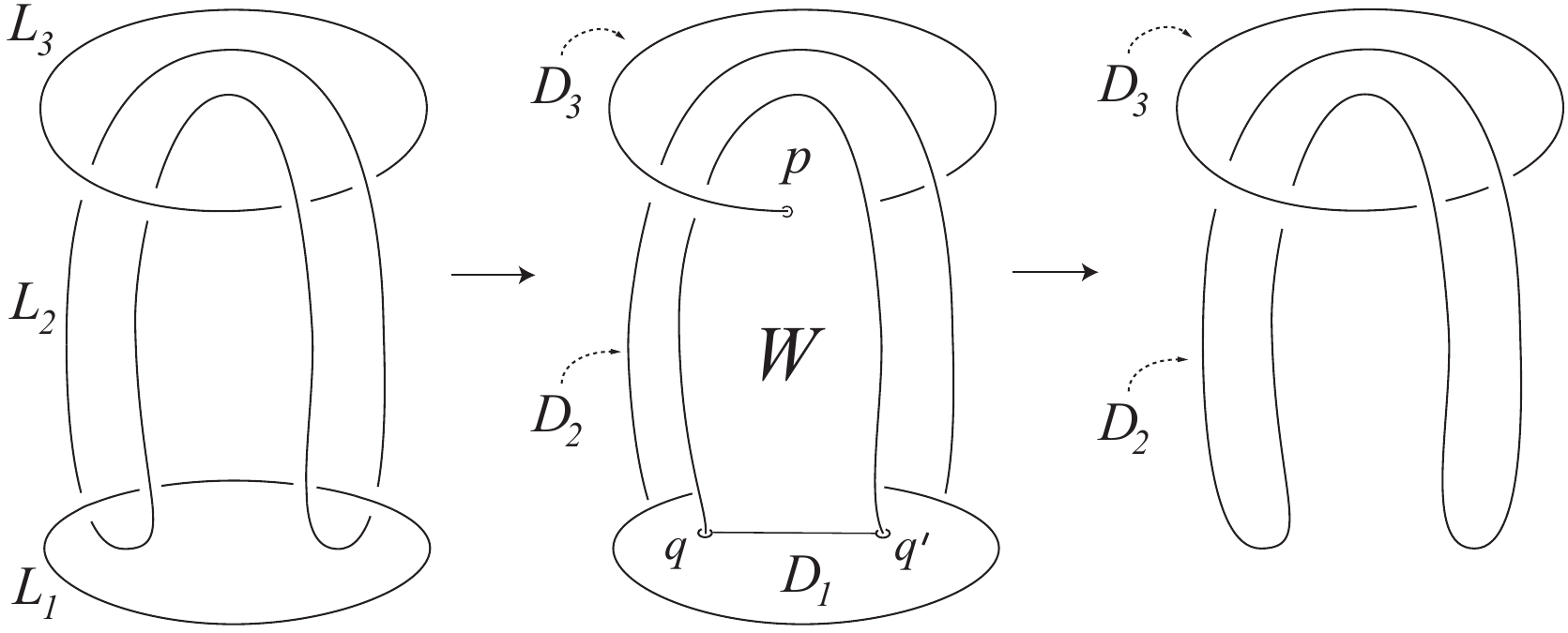}}
\caption{Moving radially into $B^4$ from left to right, this sequence of pictures shows the Borromean Rings 
$L=L_1\cup L_2\cup L_3\subset S^3=\partial B^4$ bounding an order $1$ twisted Whitney tower $\cW\subset B^4$. It consists of embedded
disks $D_i$ with $\partial D_i=L_i$ together with a Whitney disk $W$ that pairs the two intersection points 
$q$ and $q'$ between $D_1$ and $D_2$. 
The disk $D_1$ consists of the `horizontal' opaque disk in the lower part of the middle picture extended by an annular collar back to $L_1$ in the left picture. The disks $D_2$ and $D_3$ consist of the embedded annuli which are the products of $L_2$ and $L_3$ with the radial coordinate into $B^4$ together with embedded disks (not shown) extending further into $B^4$ that bound the unlink in the right picture. The embedded Whitney disk $W$ is completely contained in the middle picture and has a single intersection point $p$ with $D_3$. This unpaired `higher-order' intersection point $p$ shows that $\mu_L(1,2,3)=\pm 1$ and prevents a Whitney move that would promote $\cW$ to a collection of slice disks for $L$.
}
\label{fig:Borro-rings}
\end{figure}

Our results generalize the correspondence between Milnor invariants and higher-order intersection invariants of Whitney towers to all lengths and all orders.
We find some surprising subtleties related to Whitney disk twistings,
with the first occurence in order $n=2$, as illustrated in Section~\ref{subsec:intro-classification-sketch} (see Figure~\ref{fig:Whitehead-12infty-tower}).

To express more precisely the difference between the two filtrations $\bM_n$ and $\bW^\iinfty_n$, we work with the associated graded {\em groups}. More precisely, in Section~\ref{subsec:intro-Milnor-review} we present a {\em universal} order $n$ Milnor invariant $\mu_n:\bM_n\to \sD_n$ with values in a free abelian group of known rank. It carries exactly the information of all Milnor invariants of length $(n+2)$ and can be expressed in the language of trivialent trees that are closely related to the intersection invariant $\tau_n^\iinfty$. Moreover, it has the additivity properties  
\[
\mu_n(L\#_b L') = \mu_n(L) + \mu_n(L')  \quad \text{ and } \quad \mu_n(-L) = - \mu_n(L)
\]
where $L\#_b L'$ is any choice of band connected sum of $m$-component links $L,L'\in \bM_n$ (that are separated by an embedded 2--sphere) and $-L$ denotes the link $L$ mirror reflected and with orientations flipped. 
This additivity follows most easily from the translation of Milnor invariants into Massey products for the link complement, shown independently by
Turaev \cite{Tu} and Porter \cite{Po}.

Since $\bM_{n+1}$ consists exactly of those links $L\in \bM_n$ with $\mu_n(L)=0$, we think of the group $\sD_n$ as being in some sense the ``quotient group'' of $\bM_n$ by $\bM_{n+1}$.  The same band connected sum operation on links makes $\bW^\iinfty_n$ into a finitely generated abelian group $\W_n^\iinfty$ after taking the quotient by the equivalence relation of \emph{order $n+1$ twisted Whitney tower concordance} \cite{WTCCL},
and the following theorem gives ``three quarters'' of our main classification result:
\begin{thm} \label{thm:twisted-three-quarters-classification1} 
The universal Milnor invariants induce surjections $\mu_n\colon \W^\iinfty_n\sra \sD_n$ for all $n$, and isomorphisms for all $n\equiv 0,1,3\,\mod 4$.
\end{thm}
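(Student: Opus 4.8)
The plan is to route the universal Milnor invariant $\mu_n$ through the intersection invariant of twisted Whitney towers and then reduce the theorem to a comparison of two finitely generated abelian groups built from trivalent trees. Concretely, the precise form of Theorem~\ref{thm:pre-Milnor}, stated below as Theorem~\ref{thm:Milnor invariant}, gives a factorization $\mu_n=\eta_n\circ\tau^\iinfty_n$, in which $\tau^\iinfty_n\colon\W^\iinfty_n\to\cT^\iinfty_n$ is the intersection invariant with values in the order-$n$ twisted tree group, and $\eta_n\colon\cT^\iinfty_n\to\sD_n$ is the purely combinatorial homomorphism sending a decorated trivalent tree to the associated length-$(n+2)$ Milnor number (iterated bracketing in the free Lie ring). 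Given this factorization, the theorem splits into one geometric input (understanding $\tau^\iinfty_n$) and one algebraic input (understanding $\eta_n$).

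First I would verify that $\mu_n$ is a well-defined homomorphism on the quotient $\W^\iinfty_n$. By Theorem~\ref{thm:pre-Milnor} the length-$(n+2)$ Milnor invariants of every $L\in\bW^\iinfty_n$ are defined, so $\mu_n(L)\in\sD_n$ is meaningful. If $L$ and $L'$ are order $(n+1)$ twisted Whitney tower concordant, standard manipulations of the concordance exhibit a band sum $L\#_b(-L')$ as the boundary of an order $(n+1)$ twisted Whitney tower in $B^4$; applying Theorem~\ref{thm:pre-Milnor} once more gives $L\#_b(-L')\in\bM_{n+1}$, so its length-$(n+2)$ invariants, hence $\mu_n(L\#_b(-L'))$, vanish. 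The additivity and mirror-reflection formulas for $\mu_n$ recorded above then yield $\mu_n(L)=\mu_n(L')$ and the additivity of $\mu_n$ on $\W^\iinfty_n$.

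Surjectivity, which I expect to be the easier half, comes from realization. The realization results of \cite{CST0,WTCCL} construct, for each decorated trivalent tree of order $n$, an explicit link bounding an order $n$ (in fact framed) Whitney tower whose intersection invariant is that tree; band-summing these shows $\tau^\iinfty_n$ is onto, and indeed split by the realization map. Since $\eta_n$ is onto --- equivalently, Milnor's theorem that all length-$(n+2)$ $\mu$-invariants are realized, and in any case immediate from the tree presentation of $\sD_n$ --- the composite $\mu_n=\eta_n\circ\tau^\iinfty_n$ is onto for every $n$. For the isomorphism assertion I would then invoke the classification of \cite{WTCCL}: for $n\not\equiv2\bmod4$ the intersection invariant is also injective (``order-raising'': a twisted Whitney tower with vanishing $\tau^\iinfty_n$ bounds, after a concordance, one of order $n+1$), so $\tau^\iinfty_n\colon\W^\iinfty_n\xrightarrow{\ \cong\ }\cT^\iinfty_n$. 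It remains to check that $\eta_n$ is an isomorphism for $n\equiv0,1,3\bmod4$, which is a finite computation with trees: present $\cT^\iinfty_n$ by decorated trivalent trees modulo IHX, antisymmetry, and the twist relations, and compare its rank and torsion with the presentation of $\sD_n$ coming from the Magnus expansion. For $n$ odd there are no twisted generators and $\cT^\iinfty_n=\cT_n$ is already known to be isomorphic to $\sD_n$; for $n\equiv0\bmod4$ the twisted generators $t^\iinfty$, indexed by order $n/2$ trees, inject and contribute no new torsion, so again $\cT^\iinfty_n\cong\sD_n$.

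I expect the genuine obstacle to be this last algebraic step in the excluded case $n\equiv2\bmod4$ --- which is exactly why the statement claims only ``three quarters''. There the twist relations leave $2$-torsion in $\cT^\iinfty_n$ supported on the symmetric twisted trees $J^\iinfty$ with $J$ a trivalent tree of order $n/2$; these classes die under $\eta_n$ because the corresponding brackets vanish in the free Lie ring, so $\eta_n$ --- and hence $\mu_n$ --- cannot be injective in that degree. This $2$-torsion is precisely the receptacle of the higher-order Arf invariants, which this paper realizes geometrically, and settling whether these classes persist in $\W^\iinfty_{n}$ (equivalently, whether $\tau^\iinfty_n$ remains injective there) is the remaining quarter of the classification and is not attempted here.
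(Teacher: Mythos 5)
Your plan is close in spirit to the paper's, but there is a genuine logical gap in the way you set up the factorization, and a few imprecisions in the algebraic bookkeeping.

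The paper routes the argument through the \emph{realization} map $R_n^\iinfty\colon \cT^\iinfty_n\twoheadrightarrow \W^\iinfty_n$, which is a well-defined surjective homomorphism by construction, and establishes the relation $\eta_n=\mu_n\circ R_n^\iinfty$ (this is the content of Corollary~\ref{cor:Milnor invariants}, which is a repackaging of Theorem~\ref{thm:Milnor invariant}). Once the Levine conjecture of \cite{CST3} shows that $\eta_n$ is an isomorphism for $n\equiv 0,1,3\bmod 4$, a trivial diagram chase forces \emph{both} $R_n^\iinfty$ and $\mu_n$ to be isomorphisms, since $R_n^\iinfty$ is known to be onto. You instead write $\mu_n=\eta_n\circ\tau^\iinfty_n$ and treat $\tau^\iinfty_n$ as a map defined on $\W^\iinfty_n$. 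This is where the proposal breaks down: $\tau^\iinfty_n$ is defined on twisted Whitney towers, not on twisted Whitney tower concordance classes of links, and the fact that it descends to a well-defined map on $\W^\iinfty_n$ is not available a priori — indeed, well-definedness of $\tau^\iinfty_n$ on $\W^\iinfty_n$ is \emph{equivalent} to injectivity of $R_n^\iinfty$, which is essentially the content of the classification you are trying to prove. Your cited justification (``order-raising'') only gives the implication $\tau^\iinfty_n(\cW)=0\Rightarrow[L]=0\in\W^\iinfty_n$, which is Theorem~\ref{thm:twisted-order-raising-on-A}; that result holds for \emph{all} $n$ and does not address whether two different order-$n$ towers bounded by concordant links must yield equal (or even $\eta_n$-equivalent) intersection invariants. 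So the reasoning is circular, and the paper's choice to work with $R_n^\iinfty$ in the opposite direction is precisely what sidesteps the circularity.

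Two further points. First, your claim that for odd $n$ ``there are no twisted generators and $\cT^\iinfty_n=\cT_n$'' is wrong: by Definition~\ref{def:T-infty}, $\cT^\iinfty_{2k-1}$ is $\cT_{2k-1}$ modulo the nontrivial boundary-twist relations $\langle (i,J),J\rangle=0$, so it is a proper quotient of $\cT_{2k-1}$. This matters, because the odd-order case of $\eta_n$ being an isomorphism is not a bare restatement of ``$\cT_n\cong \sD_n$'' and requires tracking these relations through the Levine conjecture. Second, your heuristic that ``the twisted generators contribute no new torsion'' for $n\equiv0\bmod4$ is not a proof; the precise statement is the one proved in \cite{CST3} and summarized in Theorem~\ref{thm:twisted-three-quarters-classification}, and the paper relies on it as a black box here, as should you — but it is a deep theorem, not a ``finite computation with trees.''

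To repair the argument, keep your surjectivity paragraph and your identification of the algebraic input, but replace the factorization $\mu_n=\eta_n\circ\tau^\iinfty_n$ by the factorization $\eta_n=\mu_n\circ R_n^\iinfty$, cite surjectivity of $R_n^\iinfty$ from \cite{WTCCL}, cite the Levine conjecture for $\eta_n$ being an isomorphism in the stated congruence classes, and finish with the elementary observation that if $f\circ g$ is an isomorphism and $g$ is onto then both $f$ and $g$ are isomorphisms. Your diagnosis of the $n\equiv 2\bmod4$ obstruction in terms of the $2$-torsion $J^\iinfty$ classes killed by $\eta_n$ is correct and matches Proposition~\ref{prop:kerEta4k-2}.
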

This result follows from Theorem~\ref{thm:pre-Milnor} via \cite{WTCCL,CST3}, as will be explained in Section~\ref{subsec:intro-classification-sketch} (see Corollary~\ref{cor:Milnor invariants} and Theorem~\ref{thm:twisted-three-quarters-classification}). 
For the remaining ``quarter'' of orders, we need certain \emph{higher-order Arf invariants} $\Arf_k$, as described in \cite{CST0,WTCCL} and sketched below in Section~\ref{subsec:intro-higher-order-arf}.
The $\Arf_k$ are link concordance invariants which represent obstructions to \emph{framing} a twisted Whitney tower bounded by a link, with $\Arf_1$ corresponding to the classical Arf invariants of the link components.  
Although the precise values of $\Arf_k$ are not known for $k>1$ (they form a quotient of a known finite $2$-torsion group -- see Definition~\ref{def:Arf-k}), we show in this paper that these invariants measure the difference between two natural notions
of ``nilpotent approximation'' of slice disks for a link: \emph{$k$-slice} and \emph{geometrically $k$-slice} (Theorem~\ref{thm:geo-k-slice-equals-odd-W}). The construction of boundary links realizing the range of $\Arf_k$ (Lemma~\ref{lem:Bing}) yields two new geometric characterizations
of links with vanishing length $\leq 2k$ Milnor invariants, as described in Theorem~\ref{thm:geo-k-slice-after-sums} and Theorem~\ref{thm:mega-k-slice}.

The rest of this introduction develops enough material to give more refined statements of our results.

\subsection{Order $n$ Milnor invariants.}\label{subsec:intro-Milnor-review} 
For a group $\Gamma$, denote by $\Gamma_n$ the \emph{lower central series} of commutator subgroups of $\Gamma$, defined inductively by $\Gamma_1:=\Gamma$ and
$\Gamma_{n+1}:=[\Gamma,\Gamma_n]$. If $L\subset S^3$ is an $m$-component link such that all of its
longitudes lie in the $(n+1)$-th term of the
lower central series of the link group $\pi_1(S^3\setminus L)_{n+1}$, then a choice of meridians induces an isomorphism
$$
\frac{\pi_1(S^3\setminus L)_{n+1}}{\pi_1(S^3\setminus L)_{n+2}}\cong\frac{F_{n+1}}{F_{n+2}}
$$
where $F=F(m)$ is the free group on $\{x_1,x_2,\ldots,x_m\}$.

Let $\sL=\sL(m)$ denote the free Lie algebra (over the ground ring $\Z$) on generators $\{X_1,X_2,\ldots,X_m\}$. It is $\N$-graded, $\sL=\oplus_n \sL_n$, where the degree~$n$ part $\sL_n$ is the
additive abelian group of length $n$ brackets, modulo Jacobi
identities and self-annihilation relations $[X,X]=0$.
The multiplicative abelian group $\frac{F_{n+1}}{F_{n+2}}$ of
length $n+1$ commutators is isomorphic to
$\sL_{n+1}$, with $x_i$ mapping to  $X_i$ and commutators mapping to Lie brackets.

In this setting, denote by $\mu_n^i(L)$ the image of the $i$-th longitude in  $\sL_{n+1}$ under the above isomorphisms and
define the \emph{order $n$ Milnor invariant}
$\mu_n(L)$ by
$$
\mu_n(L):=\sum_{i=1}^m X_i \otimes \mu_n^i(L) \in \sL_1 \otimes
\sL_{n+1}
$$
Then $\mu_n(L)$ is the first non-vanishing \emph{universal} Milnor invariant, in the sense that it determines {\em all} Milnor invariants of \emph{length} $n+2$ (with repeating indices allowed) \cite{M1,M2}. 
The original $\bar{\mu}$-invariants are computed from the longitudes via the Magnus expansion as integers modulo indeterminacies coming from invariants of shorter length. Since in this paper we are only concerned with first non-vanishing $\mu$-invariants, we do not use the ``bar'' notation $\bar{\mu}$.

It turns out that  $\mu_n(L)$ actually lies in the kernel $\sD_n=\sD_n(m)$
of the bracket map $\sL_1 \otimes \sL_{n+1}\rightarrow \sL_{n+2}$ 
(e.g.~by ``cyclic symmetry'' \cite{FT2}). We observe that $\sL_n$ and $\sD_n$ are free abelian groups of known ranks:
The rank $r_n=r_n(m)$ of $\sL_n(m)$ is given by $r_n=\frac{1}{n}\sum_{d|n}\M(d)m^{n/d}$,
with $\M$ denoting the M\"{o}bius function \cite[Thm.5.11]{MKS}; and the rank of $\sD_n(m)$ is equal to 
$mr_{n+1}-r_{n+2}$, first identified by Orr as the number of independent (integer) ${\mu}$-invariants
of length $n+2$ in \cite{O}.

Milnor's \emph{$\bar{\mu}$-invariants} have inspired a significant amount of research over the past 50-plus years.
Work of Stallings \cite{St} implied that Milnor invariants are concordance invariants \cite{Ca}. Realization results of Cochran \cite{C,C1} and Orr \cite{O} provided geometric interpretations of $\bar{\mu}$-invariants \cite{C,IO} and
supported the development of more ``universal'' approaches, including Habbeger and Lin's classification of link homotopy \cite{HL1}  
via an Artin representation characterization of Milnor invariants for string links (see also \cite{HL2}) as well as a growing number of interpretations related to quantum invariants (e.g.~\cite{BN,CM,HM,HO,HabMe,MY}). 
There are even connections with algebraic number theory \cite{Morishita} and molecular biology \cite{Gad}.
See e.g.~chapter 10 of \cite{Hil} for more regarding Milnor invariants.

\subsection{Intersection invariants for (twisted) Whitney towers}\label{subsec:intro-W-tower-int-trees}

By \cite{WTCCL,ST2}, an order $n$ (twisted) Whitney tower $\cW$ built on properly immersed disks in the $4$--ball has an {\em intersection invariant} $\tau_n(\cW)$ (resp.~$\tau_n^\iinfty(\cW)$) which is defined by associating unitrivalent trees to the unpaired higher-order intersection points (and twisted Whitney disks) in $\cW$ (e.g.~Figures~\ref{fig:Whitehead-122-tower} and \ref{fig:Whitehead-12infty-tower}). For the convenience of the reader we briefly describe next the target groups of these invariants. Relevant details on (twisted) Whitney towers and their intersection invariants are presented in Section~\ref{sec:w-towers} below.

\begin{defn}\label{def:Tau}
 In this paper, a {\em tree} will always refer to a finite oriented unitrivalent tree, where the {\em orientation} of a tree is given by cyclic orderings of the adjacent edges around each trivalent vertex. The \emph{order} of a tree is the number of trivalent vertices.
Univalent vertices will usually be labeled from the set $\{1,2,3,\ldots,m\}$ indexing the link components, and we consider trees up to isomorphisms preserving these labelings.
Define $\cT=\cT(m)$ to be the free abelian group on such trees, modulo the antisymmetry (AS) and Jacobi (IHX) relations: 
\begin{figure}[h]
\centerline{\includegraphics[scale=.65]{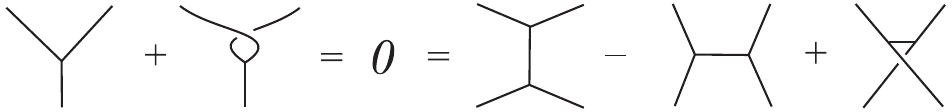}}
         \label{fig:ASandIHXtree-relations}
\end{figure}
\end{defn}

Since the AS and IHX relations are homogeneous with respect to order, $\cT$ inherits a grading $\cT=\oplus_n\cT_n$, where $\cT_n=\cT_n(m)$ is the free abelian group on order $n$ trees, modulo AS and IHX relations.

In the Whitney tower obstruction theory of \cite{ST2}, the order $n$ intersection invariant 
$\tau_n(\cW)\in\cT_n$  assigned to each order~$n$ (framed) Whitney tower $\cW$ is defined by summing the trees associated to unpaired intersections 
in $\cW$
(see Figure~\ref{fig:Whitehead-122-tower} for an example).  
The tree orientations are induced by Whitney disk orientations via a convention that corresponds to the AS relations (Section~\ref{subsec:w-tower-orientations}), and  
the IHX relations can be realized geometrically by controlled maneuvers on Whitney towers as described in \cite{CST,S2}. 
It follows from the obstruction theory that a link bounds an order~$n$ Whitney tower $\cW$ with $\tau_n(\cW)=0$ 
if and only if it bounds an order $n+1$ Whitney tower \cite[Thm.2]{ST2}.

For twisted Whitney towers of order $n$, the intersection invariant $\tau_n^\iinfty$ introduced in \cite{WTCCL}
also assigns certain \emph{twisted trees} ($\iinfty$-trees) to Whitney disks which are not framed (see Section~\ref{subsec:twisted-w-disks} below), and takes values in the following graded groups:

\begin{defn}[\cite{WTCCL}]\label{def:T-infty}
In odd orders, the group $\cT^{\iinfty}_{2k-1}$ is the quotient of $\cT_{2k-1}$ by the \emph{boundary-twist relations}: 
\[
 i\,-\!\!\!\!\!-\!\!\!<^{\,J}_{\,J}\,\,=\,0
\] 
where $J$ ranges over all order $k-1$ subtrees. 

A \emph{rooted} tree has one unlabeled univalent vertex, called the \emph{root}. 
For any rooted tree $J$ we define the corresponding {\em $\iinfty$-tree}, denoted by $J^\iinfty$, by labeling the root univalent vertex with the twist-symbol ``$\iinfty$'':
$$
J^\iinfty := \iinfty\!-\!\!\!- J 
$$ 
In even orders, the group $\cT^{\iinfty}_{2k}$ is the free abelian group on trees of order $2k$ 
and $\iinfty$-trees of order $k$, modulo the following four types of relations:
\begin{enumerate}
     \item AS and IHX relations on order $2k$ trees
   \item \emph{symmetry} relations: $(-J)^\iinfty = J^\iinfty$
  \item \emph{twisted IHX} relations: $I^\iinfty=H^\iinfty+X^\iinfty- \langle H,X\rangle $
   \item {\em interior-twist} relations: $2\cdot J^\iinfty=\langle J,J\rangle $
\end{enumerate}
Here the \emph{inner product} $\langle T_1,T_2\rangle $ of two order $k$ rooted trees $T_1$ and $T_2$ is defined by gluing the roots together to get an unrooted tree of order $2k$. The AS and IHX relations are as pictured above, but they only apply to ordinary trees (not to $\iinfty$-trees). 
\end{defn}
These relations have the following geometric origins:
The \emph{symmetry relation} corresponds to the fact that the relative 
Euler number of a Whitney disk is independent of its orientation, with the minus sign denoting that the cyclic edge-orderings at the trivalent vertices of $-J$ differ from those of $J$ at an odd number of vertices.
As explained in \cite{WTCCL}, the \emph{twisted IHX relation} corresponds to the effect of performing a Whitney move in the presence of a twisted Whitney disk, and the \emph{interior-twist relation} corresponds to the fact that creating a $\pm1$ self-intersection in a Whitney disk changes its twisting by $\mp 2$.

The main reason why this is a good definition comes from the following result, which is Theorem~2.10 of \cite{WTCCL}: A link $L\subset S^3$ bounds an order $n$ twisted Whitney tower $\cW\subset B^4$ with $\tau_n^\iinfty(\cW)=0\in\cT^\iinfty_n$ if and only if $L$ bounds an order $n+1$ twisted Whitney tower in $B^4$ .

\subsection{The summation maps $\eta_n$}\label{subsec:intro-eta-map}
The connection between $\tau^\iinfty_n(\cW)$ and $\mu_n(L)$ is via a homomorphism $\eta_n : \cT^\iinfty_n \to \sD_n$, which is best explained when we regard rooted trees of order $n$ as elements in $\sL_{n+1}$ in the usual way:
For $v$ a univalent vertex of an order $n$ tree $t$ as in Definition~\ref{def:Tau}, denote by $B_v(t)\in\sL_{n+1}$ the Lie bracket of generators $X_1,X_2,\ldots,X_m$ determined by the formal bracketing of indices
which is gotten by considering $v$ to be a root of $t$.

\begin{defn}\label{def:eta}
Denoting the label of a univalent vertex $v$ by $\ell(v)\in\{1,2,\ldots,m\}$, the
map $\eta_n:\cT^\iinfty_n\rightarrow \sL_1 \otimes \sL_{n+1}$
is defined on generators by
$$
\eta_n(t):=\sum_{v\in t} X_{\ell(v)}\otimes B_v(t)
\quad \, \,
\mbox{and}
\quad \, \,
\eta_n(J^\iinfty):= \frac{1}{2}\,\eta_n(\langle J,J \rangle)
$$
The first sum is over all univalent vertices $v$ of $t$, and the second expression lies in $\sL_1 \otimes \sL_{n+1}$ 
because the coefficients of $\eta_n(\langle J,J \rangle)$ are even. Here $J$ is a rooted tree of order $k$ for $n=2k$.
\end{defn}
For example, 
\[
 \begin{array}{lll}
\eta_1\!\left( {\scriptstyle 1}-\!\!\!\!\!-\!\!\!<^{\,\,3}_{\,\,2}\,\right)  &= \quad  X_1\otimes\,-\!\!\!\!\!-\!\!\!<^{\,\,3}_{\,\,2}\,\,\ + \quad X_2\otimes \,{\scriptstyle 1}\!-\!\!\!\!\!-\!\!\!<^{\,\,3}_{\,\,}\,\,\  + \quad X_3\otimes \, {\scriptstyle 1}\!-\!\!\!\!\!-\!\!\!<^{\,\,}_{\,\,2}\,  
\\
&= \quad  X_1\otimes\,[X_2,X_3]+ X_2\otimes [X_3,X_1]  +  X_3\otimes [X_1,X_2].
\end{array}
\]
And similarly,
 \[
\begin{array}{llc}
\eta_2\!\left( {\scriptstyle \iinfty}-\!\!\!\!\!-\!\!\!<^{\,\,2}_{\,\,1}\,\right)  &= \frac{1}{2}\,\eta_2\!\left(^{\,\,1}_{\,\,2}>\!\!\!-\!\!\!\!\!-\!\!\!<^{\,\,2}_{\,\,1}\right) \\ 
&= X_1\,\otimes _{\,\,2}\!>\!\!\!-\!\!\!\!\!-\!\!\!<^{\,\,2}_{\,\,1} \,+\,  X_2\,\otimes  ^{\,\,1}\!>\!\!\!-\!\!\!\!\!-\!\!\!<^{\,\,2}_{\,\,1}   
\\
&= X_1\otimes\,[X_2,[X_1,X_2]]+ X_2\otimes [[X_1,X_2],X_1]. 
\end{array}
\]
In Section~\ref{subsec:lemma-eta-well-defined-proof} we check that $\eta_n$ is well-defined and maps $\cT_n^\iinfty$ onto $\sD_n$.

We can now make Theorem~\ref{thm:pre-Milnor} precise as follows:
\begin{thm}\label{thm:Milnor invariant}
If $L$ bounds a twisted Whitney tower $\cW$ of order $n$, then the order $k$ Milnor invariants $\mu_k(L)$ vanish for $k<n$ and
\[
\mu_n(L) = \eta_n \circ\tau^\iinfty_n(\cW) \in \sD_n
\]
\end{thm}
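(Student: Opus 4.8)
The plan is to prove Theorem~\ref{thm:Milnor invariant} by induction on the order $n$, using the obstruction-theoretic fact (Theorem 2.10 of \cite{WTCCL}, recalled above) that vanishing of $\tau_n^\iinfty(\cW)$ allows one to raise the order of the tower. First I would dispose of the vanishing statement: if $L$ bounds an order $n$ twisted Whitney tower, then in particular it bounds an order $k$ twisted Whitney tower for each $k<n$, and by the inductive hypothesis applied to such a tower together with the naturality of $\eta_k$, one gets $\mu_k(L)=\eta_k\circ\tau_k^\iinfty(\cdot)$; but the order $n$ tower restricts to an order $k$ tower with \emph{no} unpaired intersections of order $k$ and no twisted Whitney disks of order $k$, so its order $k$ intersection invariant is $0$, hence $\mu_k(L)=0$. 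This reduces everything to the main identity $\mu_n(L)=\eta_n\circ\tau_n^\iinfty(\cW)$.

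For the main identity, the base cases are $n=0$ (where $\tau_0^\iinfty$ records transverse intersections between the $D_i$ and the framings, matching $\mu_L(i,j)$ and the self-linking $\mu_L(i,i)$, as indicated after Theorem~\ref{thm:pre-Milnor}) and $n=1$ (Matsumoto's computation of $\mu_L(i,j,k)$ from interior intersections of Whitney disks with the $D_i$, to be carried out in Section~\ref{subsec:main-proof-order-1}). For the inductive step, the strategy is to compute $\mu_n(L)$ directly from the geometry of $\cW$ by reading off the longitudes. The key idea is that in a twisted Whitney tower the $i$-th longitude, pushed into the link complement, can be expressed as a product of conjugates of meridians, one for each unpaired intersection point and each twisted Whitney disk of order $n$ that ``touches'' component $i$, with the conjugating element recording the nested bracketing of meridians dictated by the tree associated to that intersection/twist. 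Concretely, each order $n$ tree $t$ with a univalent vertex $v$ labeled $i$ contributes the commutator $B_v(t)\in F_{n+1}/F_{n+2}\cong\sL_{n+1}$ to $\mu_n^i(L)$, which is exactly the $X_i$-component of $\eta_n(t)$; and each $\iinfty$-tree $J^\iinfty$ of order $k$ (with $n=2k$) contributes via the interior-twist relation, matching the $\tfrac12\eta_n(\langle J,J\rangle)$ formula. Summing over all univalent vertices and all unpaired intersections/twists yields $\mu_n(L)=\eta_n(\tau_n^\iinfty(\cW))$.

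To make the longitude computation rigorous, I would proceed as follows. After a preliminary cleanup (using the obstruction theory and the fact that $\mu_n$ and $\tau_n^\iinfty$ both only depend on $L$ up to the relevant concordance, and are additive under band-connected-sum), reduce to a normal-form twisted Whitney tower in which the immersed disks $D_i$ and all the Whitney disks are as standard as possible, so that a meridional Wirtinger-type presentation of $\pi_1(S^3\setminus L)$ and its lower central quotients can be computed from the combinatorics of the tower. Then show by a careful grope/Whitney-tower-to-grope argument (as in \cite{CST,S2} and the references on gropes in the abstract) that a grope of class $n+1$ associated to each component $i$, built from the Whitney disks of $\cW$, has boundary the $i$-th longitude and has ``attaching data'' encoded by the trees; the standard dictionary between gropes and iterated commutators then identifies $\mu_n^i(L)\in\sL_{n+1}$ with $\sum_{v\colon\ell(v)=i}B_v(t)$ summed over the contributing trees. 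Finally, assemble the $X_i\otimes(\cdot)$ terms to recognize the answer as $\eta_n(\tau_n^\iinfty(\cW))$, and separately verify that the twisted Whitney disks contribute precisely $\eta_n(J^\iinfty)$ by analyzing the effect of a twist (a self-plumbing) on the relevant longitude, matching the interior-twist relation $2J^\iinfty=\langle J,J\rangle$.

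The main obstacle I anticipate is the bookkeeping in the inductive step: correctly tracking how the conjugating elements (i.e.\ the bracket structure $B_v(t)$) arise from the nesting of Whitney disks, including sign/orientation conventions so that the tree orientations (AS relations) match the signs of the intersection points and the Lie-bracket antisymmetry, and making sure the ``lower-order'' contributions genuinely vanish modulo $F_{n+2}$ so that the length $n+2$ Milnor invariant is well-defined and picks up exactly the order $n$ trees. The twisted case adds the subtlety that a single twisted Whitney disk of order $k$ must contribute the \emph{symmetrized} bracket $\langle J,J\rangle$ (equivalently $2J^\iinfty$) rather than a single tree, which requires understanding the local model of a twist as producing a pair of intersection points in the limit and invoking the interior-twist relation; getting this factor of $2$ and the symmetry relation $(-J)^\iinfty=J^\iinfty$ to be consistent with the longitude computation is where the argument is most delicate.
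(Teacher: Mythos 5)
Your overall strategy---convert $\cW$ to a capped grope via the Whitney-tower-to-grope correspondence, read off the longitudes as iterated commutators from the tree data, and sum the $X_{\ell(v)}\otimes B_v(t)$ contributions to recognize $\eta_n\circ\tau_n^\iinfty$---is indeed the route the paper takes. But there is a genuine gap in how you propose to make the longitude computation rigorous, and it sits at exactly the point the paper has to work hardest.

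The capped grope $G^c$ you build from $\cW$ lives in $B^4$, and the longitude $\gamma_i$ bounds a parallel push-off of the grope component $G_i$ in $B^4\setminus G^c$. The ``standard dictionary'' between gropes and commutators therefore only tells you how to express $\gamma_i$ as an $(n+1)$-fold commutator of meridians in $\pi_1(B^4\setminus G^c)$, whereas the Milnor invariant $\mu_n^i(L)$ is defined via the image of $\gamma_i$ in $\pi_1(S^3\setminus L)_{n+1}/\pi_1(S^3\setminus L)_{n+2}$. The inclusion $S^3\setminus L\hookrightarrow B^4\setminus G^c$ is very far from a $\pi_1$-isomorphism (it kills all the longitudes, for instance), so knowing the commutator expression in the target does not a priori determine it in the source. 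The paper closes this gap with its Lemma~\ref{lem:grope-duality}: the inclusion induces an isomorphism on $\pi_1$ modulo $(n+2)$-fold commutators, and proving this requires Dwyer's theorem together with a careful analysis showing that $H_2(B^4\setminus G^c,\,S^3\setminus L)$ is generated by maps of closed gropes of class $\ge n+2$ (Clifford tori capped off by the grope-duality construction of Krushkal--Teichner, suitably adapted to twisted caps). Your proposal does not identify this step, and the alternative you gesture at---normalizing $\cW$ and extracting a Wirtinger-type presentation of $\pi_1(S^3\setminus L)$ directly from the tower combinatorics---is not developed and would face essentially the same difficulty in a different guise. This is not bookkeeping; it is the technical heart of the proof.

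Two smaller points. First, your vanishing argument claims the order~$n$ tower ``restricts to'' an order~$k$ twisted tower for each $k<n$ with $\tau_k^\iinfty=0$; this is not literally true (for $n$ even and $k=n-1$ odd, the order-$n$ twisted tower may have twisted Whitney disks of order $n/2$, whereas an order $n-1$ twisted tower must be framed). The correct fix is to invoke Theorem~\ref{thm:twisted-order-raising-on-A} or the filtration structure from~\cite{WTCCL}; alternatively, the paper derives the vanishing as a byproduct of grope duality, by observing that the longitudes visibly lie in $\pi_1(B^4\setminus G^c)_{n+1}$. Second, for twisted Whitney disks your description of a twist as ``producing a pair of intersection points in the limit'' is vaguer than what is needed: the precise geometric input (Section~\ref{subsubsec:general-twisted-case}) is that a parallel push-off of the boundary of a $\pm1$-twisted cap $c_J^\iinfty$ \emph{is} the $\pm$-meridian $x_J^{\pm1}$ to the cap, which is what forces the ``reflected'' commutator $\beta_v(\langle J,J\rangle)$ and accounts for the factor $\tfrac12\eta_n(\langle J,J\rangle)$ once one sums over the univalent vertices of $J$.
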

The proof of Theorem~\ref{thm:Milnor invariant} given in Section~\ref{sec:Milnor-thm-proof} uses a geometric interpretation of the maps  $\eta_n$
which shows precisely how higher-order intersections and Whitney disk twistings correspond to sums of iterated commutators.
Closely related maps over the rationals appear in Habegger and Masbaum's work on the Kontsevich invariant showing that the Milnor invariants 
are the universal finite type (rational) concordance invariants \cite{HM}. Levine's work on homology cylinders \cite{L1,L2,L3} led him to study analogous maps over the integers, and our resolution in \cite{CST3} of his main conjecture accomplished an important step in the classification of both the twisted and framed Whitney tower filtrations on link concordance, as discussed in the next two subsections.
It should also be noted that the relationship between Milnor invariants and trees originally goes back to Cochran's method
of constructing links realizing given (integer) Milnor invariants \cite{C,C1}.

\subsection{Computing the graded groups associated to the filtrations} \label{subsec:intro-classification-sketch}
As a prelude to the description of our results on the higher-order Arf invariants, we briefly recall from \cite{WTCCL} the computation of the
groups $\W_n^\iinfty$ and $\W_n$ associated to the twisted and framed Whitney tower filtrations on link concordance. 
Here the groups $\W_n$ in the framed setting
are defined analogously to $\W_n^\iinfty$: The equivalence relation of  (framed) {\em Whitney tower concordance of order $n+1$} on the set $\bW_n$ of $m$-component links that bound (framed) order~$n$ Whitney towers in the $4$--ball defines $\W_n$ as the associated graded quotient, which also turns out to be a finitely generated abelian group under the band connected sum operation for all $n$.

In \cite{WTCCL} we constructed framed and twisted \emph{realization} epimorphisms
$$
R_n : \cT_n \sra\W_n \quad \textrm{and} \quad R^\iinfty_n : \cT^\iinfty_n \sra\W^\iinfty_n
$$
which send $g\in\cT^{(\iinfty)}_n$ to the equivalence class of links bounding an order $n$ (twisted) Whitney tower $\cW$ with $\tau^{(\iinfty)}_n(\cW)=g$. These maps are defined similarly to Cochran's construction for realizing Milnor invariants (\cite[Sec.7]{C} and \cite[Thm.3.3]{C1}) by ``Bing-doubling along trees'' and taking internal band sums if indices repeat: The Hopf link realizes the order zero tree ${\scriptstyle 1}-\!\!\!-\!\!\!- {\scriptstyle 2}$ corresponding to a transverse intersection between disks in $B^4$ bounded by the components. To realize higher-order generators, iterated (untwisted) Bing-doublings are performed according to the branching of the tree, until we obtain the correct tree but with non-repeating indices labeling the univalent vertices. For example, a single Bing-doubling on the Hopf link yields the Borromean rings realizing ${\scriptstyle 1}-\!\!\!\!\!-\!\!\!<^{\,\,3}_{\,\,2}$. Finally, we take internal band sums according to which indices repeat. For example, one internal band sum may take the Borromean rings to the Whitehead link defining $R_1({\scriptstyle 1}-\!\!\!\!\!-\!\!\!<^{\,\,2}_{\,\,2})$ (see Figure~\ref{fig:Whitehead-122-tower}).

\begin{figure}[h]
\centerline{\includegraphics[scale=.44]{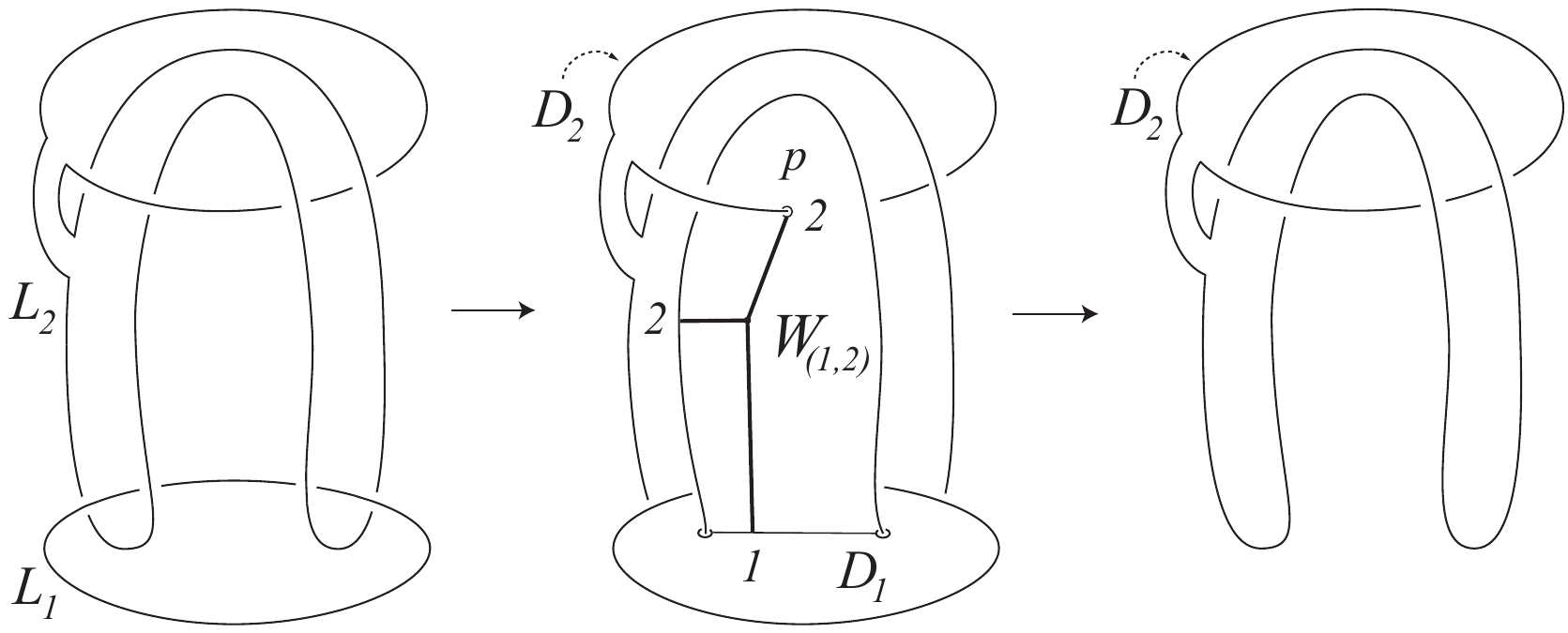}}
\caption{Moving radially into $B^4$ from left to right, this sequence of pictures shows a Whitehead link $L\subset S^3$ bounding an order $1$ Whitney tower $\cW\subset B^4$ with $\tau_1(\cW)={\scriptstyle 1}-\!\!\!\!\!-\!\!\!<^{\,\,2}_{\,\,2}$. 
The left picture shows $L=L_1\cup L_2\subset S^3$ formed by an internal band sum on the Borromean rings. 
Moving into $B^4$ in the middle and right pictures, $L_1$ and $L_2$ bound embedded disks $D_1$ and $D_2$ with an embedded Whitney disk $W_{(1,2)}$ pairing $D_1\cap D_2$. The disk $D_1$ consists of the `horizontal' opaque disk in the lower part of the middle picture extended by an annular collar back to $L_1$ in the left picture. The embedded Whitney disk $W_{(1,2)}$ is completely contained in the middle picture, which also contains the tree $t_p={\scriptstyle 1}-\!\!\!\!\!-\!\!\!<^{\,\,2}_{\,\,2}$ associated to the unpaired intersection point $p$ between $D_2$ and the interior of $W_{(1,2)}$.
The disk $D_2$ is the union of the embedded annulus (visible in all three pictures) which is the product of $L_2$ with the radial coordinate into $B^4$, together with an embedded disk (not shown) extending further into $B^4$ that bounds the parallel of $L_2$ in the right picture.}
\label{fig:Whitehead-122-tower}
\end{figure}

\begin{figure}[h]
\centerline{\includegraphics[scale=.44]{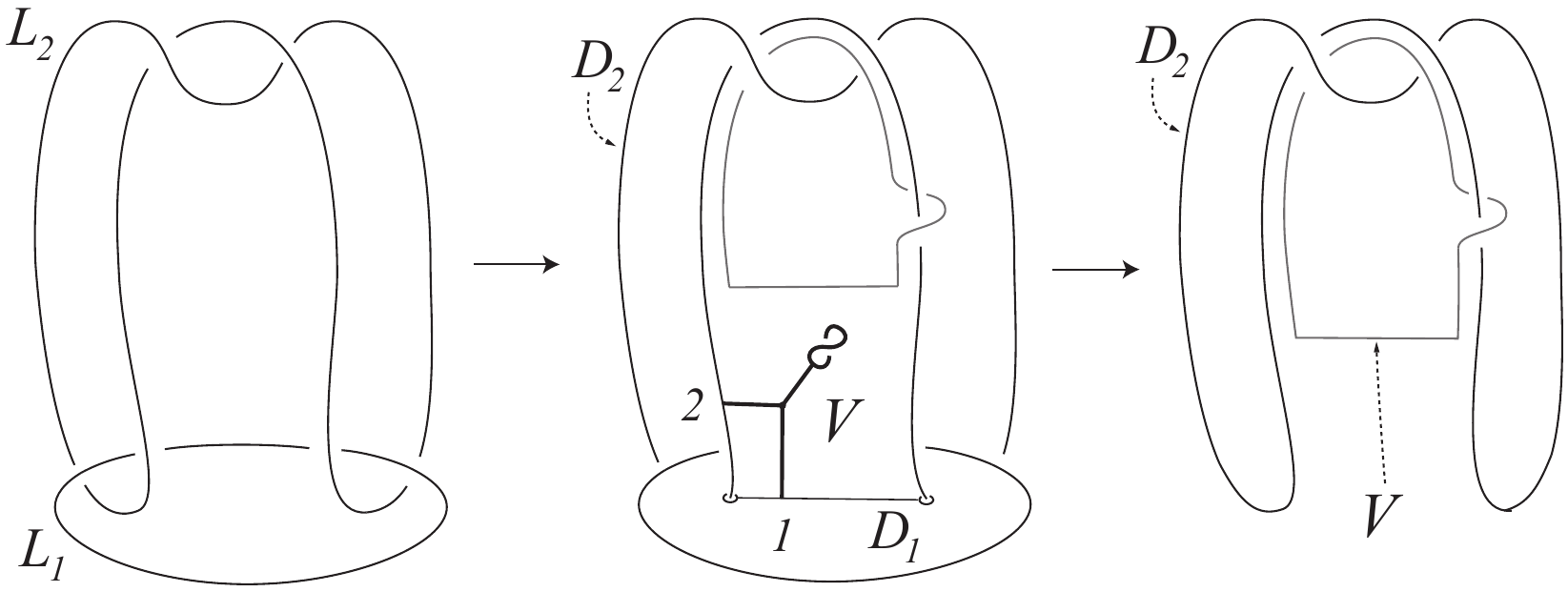}}
\caption{The Whitehead link $L$ of Figure~\ref{fig:Whitehead-122-tower} also
bounds an order $2$ twisted Whitney tower $\cV$ consisting of embedded disks $D_1$ and $D_2$ bounded by $L_1$ and $L_2$, together with an embedded twisted Whitney disk $V$ pairing the intersections $D_1\cap D_2$. Moving into $B^4$ from left to right, the left picture again shows 
$L=L_1\cup L_2\subset S^3$, and the disks $D_1$ and $D_2$ are described just as in Figure~\ref{fig:Whitehead-122-tower}, with $D_1$ contained in the left and middle pictures, while circle slices of $D_2$ are visible in all three pictures with the rest of $D_2$ (not shown) extending further into $B^4$ as an embedded disk bounding the indicated unlink component in the right hand picture. 
The embedded twisted Whitney disk $V$ containing its associated twisted tree is partly visible in the middle picture which shows an opaque annular region of $V$ that contains the boundary of $V$. The rest of $V$ extends further into $B^4$ and is visible as the indicated component of the unlink in the right picture together with an embedded disk (not shown) bounding this component which is disjoint from the part of $D_2$ that bounds the other unlink component. That $V$ is twisted will be shown in Section~\ref{sec:Bing-Arf-proof}.} 
\label{fig:Whitehead-12infty-tower}
\end{figure}

For $\iinfty$-trees, the starting point is the 1-framed unknot as $R_0^\iinfty({\scriptstyle \iinfty}\!-\!\!\!- {\scriptstyle 1})$. The first Bing-doubling has to be a twisted one, giving a Whitehead link as $R_2^\iinfty({\scriptstyle \iinfty}-\!\!\!\!\!-\!\!\!<^{\,\,1}_{\,\,2})$. Notice that this means that the Whitehead link bounds two different Whitney towers, one framed of order~1 and the other twisted of order~2 (Figure~\ref{fig:Whitehead-12infty-tower}). 
This is the easiest example illustrating how the Milnor invariants interact differently with the framed and twisted Whitney tower filtrations:
The Whitehead link $L$ bounds a twisted Whitney tower $\cV$ of order $2$, but not one of order 3, as detected by 
$\tau_2^\iinfty(\cV)={\iinfty}-\!\!\!\!\!-\!\!\!<^{\,\,1}_{\,\,2}\,\,\neq 0\in\cT^\iinfty_2$, corresponding to the non-triviality of $\mu_2(L)$. However, even though the longitudes of $L$ are length three commutators (so that $\mu_2(L)$ is defined), $L$ does not bound an order 2 \emph{framed} Whitney tower; as detected by $\tau_1(\cW)={\scriptstyle 1}-\!\!\!\!\!\!-\!\!\!<^{\,\,2}_{\,\,2} \,\neq 0\in\cT_1$, corresponding to the non-trivial \emph{Sato-Levine invariant} \cite{Sa} which is the projection of $\mu_2(L)$. This phenomenon
occurs in all odd orders of the framed Whitney tower filtration, as described by the \emph{higher-order Sato-Levine invariants} defined in \cite{WTCCL}.

The maps $R^\iinfty_n$ bound the size of the abelian groups $\W^\iinfty_n$ from above, and the following corollary of Theorem~\ref{thm:Milnor invariant} shows that Milnor invariants give a lower bound: 
\begin{cor} \label{cor:Milnor invariants} There is a commutative diagram of epimorphisms
\[
\xymatrix{
\cT^\iinfty_n \ar@{->>}[r]^{R_n^\iinfty} \ar@{->>}[rd]_{\eta_n} & \W^\iinfty_n \ar@{->>}[d]^{\mu_n}\\
& \sD_{n}
}\]
\end{cor}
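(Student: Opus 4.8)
The plan is to assemble the diagram from three facts that are already in hand. First, Theorem~\ref{thm:pre-Milnor} (made precise by Theorem~\ref{thm:Milnor invariant}) gives, for any link $L$ bounding an order $n$ twisted Whitney tower $\cW$, the identity $\mu_n(L)=\eta_n\circ\tau^\iinfty_n(\cW)\in\sD_n$. Second, $R^\iinfty_n:\cT^\iinfty_n\sra\W^\iinfty_n$ is the realization epimorphism of \cite{WTCCL}, sending $g$ to the class of links bounding an order $n$ twisted Whitney tower with $\tau^\iinfty_n=g$. Third, $\eta_n:\cT^\iinfty_n\to\sD_n$ is, by the computation to be carried out in Section~\ref{subsec:lemma-eta-well-defined-proof}, well-defined and \emph{surjective}. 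So the bottom-left triangle only needs to be shown to commute and to descend to a map on $\W^\iinfty_n$.

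The first step is to produce the map $\mu_n\colon\W^\iinfty_n\to\sD_n$. A priori $\mu_n$ is only defined on the set $\bM_n\supseteq\bW^\iinfty_n$; I need to check it is constant on order $n+1$ twisted Whitney tower concordance classes and is a homomorphism for the band-sum operation. Homomorphism and behavior under orientation-reversal are exactly the additivity properties $\mu_n(L\#_b L')=\mu_n(L)+\mu_n(L')$ and $\mu_n(-L)=-\mu_n(L)$ already stated in Section~\ref{subsec:intro-Milnor-review} (via Turaev--Porter). For invariance under the equivalence relation defining $\W^\iinfty_n$: if $L$ and $L'$ cobound an order $n+1$ twisted Whitney tower concordance, then in particular they are concordant, and Milnor invariants are concordance invariants (Stallings \cite{St}, \cite{Ca}); more precisely, $L\#(-L')$ then bounds an order $n+1$ twisted Whitney tower, hence lies in $\bM_{n+1}$ by Theorem~\ref{thm:pre-Milnor}, so $\mu_n(L\#(-L'))=0$, and additivity gives $\mu_n(L)=\mu_n(L')$. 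Thus $\mu_n$ descends to a homomorphism $\W^\iinfty_n\to\sD_n$.

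The second step is commutativity of the triangle, i.e.\ $\mu_n\circ R^\iinfty_n=\eta_n$ as maps $\cT^\iinfty_n\to\sD_n$. Given a generator $g\in\cT^\iinfty_n$, pick a link $L$ in the class $R^\iinfty_n(g)$; by definition $L$ bounds an order $n$ twisted Whitney tower $\cW$ with $\tau^\iinfty_n(\cW)=g$. Then $\mu_n(R^\iinfty_n(g))=\mu_n(L)=\eta_n(\tau^\iinfty_n(\cW))=\eta_n(g)$, the middle equality being Theorem~\ref{thm:Milnor invariant}. Since this holds on generators and all maps are homomorphisms, the triangle commutes.

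Finally, the surjectivity claims: $R^\iinfty_n$ is surjective by \cite{WTCCL}; $\eta_n$ is surjective by the forthcoming lemma of Section~\ref{subsec:lemma-eta-well-defined-proof}; and then $\mu_n=\mu_n\circ R^\iinfty_n\circ (R^\iinfty_n)^{-1}$ applied to any preimage shows $\mu_n=\eta_n\circ(\text{section of }R^\iinfty_n)$ has image all of $\sD_n$, so $\mu_n$ is surjective as well. Hence all three arrows are epimorphisms and the diagram is as claimed. I expect the only nontrivial point to be the descent of $\mu_n$ to $\W^\iinfty_n$ — that is, checking that order $n+1$ twisted Whitney tower concordance forces equality of the universal Milnor invariant — and even that reduces, via the additivity from \cite{Tu,Po} and the inclusion $\bW^\iinfty_{n+1}\subseteq\bM_{n+1}$ of Theorem~\ref{thm:pre-Milnor}, to a short formal argument; everything else is diagram-chasing on generators.
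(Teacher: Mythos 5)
Your assembly matches the argument the paper intends: Corollary~\ref{cor:Milnor invariants} is stated as an immediate consequence of Theorem~\ref{thm:Milnor invariant}, the realization epimorphism $R^\iinfty_n$ from \cite{WTCCL}, and Lemma~\ref{lem:eta-well-defined}, and those are exactly the three facts you combine, with commutativity checked on generators.

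One small inaccuracy: an order $n{+}1$ twisted Whitney tower concordance does \emph{not} imply that $L$ and $L'$ are concordant --- that relation is strictly weaker than concordance (the annuli support a Whitney tower of finite order rather than being disjointly embedded), so the parenthetical appeal to Stallings/Casson concordance invariance of $\mu$ does not apply as stated. Fortunately your ``more precisely'' clause is the correct argument and is self-contained: $L\#(-L')$ bounds an order $n{+}1$ twisted Whitney tower (the compatibility of the band-sum group structure with the equivalence relation, from \cite{WTCCL}), hence $\mu_n(L\#(-L'))=0$ by Theorem~\ref{thm:pre-Milnor}/Theorem~\ref{thm:Milnor invariant}, and additivity of $\mu_n$ from Turaev--Porter finishes the descent. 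You should simply delete the claim that the two links are concordant and keep the rest.
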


As a consequence of our proof \cite{CST3} of 
the combinatorial conjecture of Levine formulated in \cite{L2}, we have the following partial classification of $\W^\iinfty_n$:
\begin{thm}[\cite{WTCCL}] \label{thm:twisted-three-quarters-classification} The maps $\eta_n:\cT^\iinfty_n \to \sD_n$ are isomorphisms for $n\equiv 0,1,3\,\mod 4$.
As a consequence, both the Milnor invariants $\mu_n\colon \W^\iinfty_n\to \sD_n$  and the twisted realization maps $R^\iinfty_n : \cT^\iinfty_n \to\W^\iinfty_n$ are isomorphisms for these orders.
\end{thm}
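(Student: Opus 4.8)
The plan is to reduce the entire statement to one assertion — that $\eta_n\colon\cT^\iinfty_n\to\sD_n$ is injective for $n\equiv 0,1,3\,\mod 4$ — and then to derive this injectivity from the resolution in \cite{CST3} of Levine's combinatorial conjecture \cite{L2}.

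\emph{Reduction to injectivity of $\eta_n$.} By Corollary~\ref{cor:Milnor invariants} there is, for each $n$, a commutative triangle of epimorphisms $\cT^\iinfty_n\to\W^\iinfty_n\to\sD_n$ whose legs are $R^\iinfty_n$ and $\mu_n$ and whose composite is $\eta_n=\mu_n\circ R^\iinfty_n$. Suppose $\eta_n$ is injective. Since it is also surjective (this is checked in Section~\ref{subsec:lemma-eta-well-defined-proof}), it is then an isomorphism; injectivity of $\eta_n$ forces injectivity of $R^\iinfty_n$, which is therefore an isomorphism, being also surjective; and then $\mu_n=\eta_n\circ(R^\iinfty_n)^{-1}$ is an isomorphism as well. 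So the whole theorem follows once we know that $\eta_n$ is injective whenever $n\equiv 0,1,3\,\mod 4$.

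\emph{Injectivity as a rank-and-torsion count.} The group $\cT^\iinfty_n$ is finitely generated (there are only finitely many labeled oriented trivalent trees, and $\iinfty$-trees, of a given order), and $\sD_n$ is free abelian of rank $mr_{n+1}-r_{n+2}$, so the surjection $\eta_n$ splits and $\cT^\iinfty_n\cong\sD_n\oplus\ker\eta_n$. Consequently $\eta_n$ is injective if and only if $\cT^\iinfty_n$ is torsion-free of rank $mr_{n+1}-r_{n+2}$: in that case $\ker\eta_n$ is a torsion-free group of rank $0$, hence trivial. Thus the task becomes to compute the abelian group $\cT^\iinfty_n$ in the degrees $n\equiv 0,1,3\,\mod 4$. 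One recasts the twisted tree groups purely algebraically for this — in odd degrees $\cT^\iinfty_{2k-1}$ is $\cT_{2k-1}$ modulo the boundary-twist relations, and in even degrees $\cT^\iinfty_{2k}$ is built from $\cT_{2k}$ together with the $\iinfty$-trees $J^\iinfty$ modulo the symmetry, twisted-IHX and interior-twist relations — under which $\eta_n$ becomes (essentially) the map studied by Levine. The resolution in \cite{CST3} of Levine's conjecture computes the kernel and cokernel of $\eta_n$ in every degree; the only obstruction to injectivity is a $2$-torsion group manufactured from the free (quasi-)Lie algebra, whose (non)vanishing is governed by whether a certain doubling map is injective. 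This obstruction vanishes precisely for $n\not\equiv 2\,\mod 4$, i.e.\ for $n\equiv 0,1,3\,\mod 4$, which yields the required torsion-freeness and rank, and the theorem follows.

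\emph{Where the difficulty lies.} The diagram chase and the splitting argument are routine, together with the elementary observation that a surjection from a torsion-free finitely generated abelian group onto a free abelian group of the same rank is an isomorphism. The entire weight of the theorem rests on \cite{CST3}, a delicate homological-algebra computation for free Lie and quasi-Lie algebras over $\Z$: one must carefully track the interior-twist relation $2\,J^\iinfty=\langle J,J\rangle$ and the gap between the relations $[X,Y]+[Y,X]=0$ and $[X,X]=0$, and it is the parity of $k$ — equivalently the residue of $n$ modulo $4$ — that decides whether the relevant doubling map is injective. The failure of injectivity in the degrees $n\equiv 2\,\mod 4$ is exactly the room left for the higher-order Arf invariants, which is why only ``three quarters'' of the filtration is classified here.
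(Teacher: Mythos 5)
Your reduction of the theorem to injectivity of $\eta_n$ via the commuting triangle of Corollary~\ref{cor:Milnor invariants} is correct, and the split-surjection observation (a surjection of a finitely generated torsion-free abelian group onto a free abelian group of the same rank is an isomorphism) correctly converts the problem into showing that $\cT^\iinfty_n$ is torsion-free of the expected rank. That much matches the logic of the proof in \cite{WTCCL}, to which the paper defers.

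The gap is in the pivot to \cite{CST3}. What \cite{CST3} proves is that Levine's \emph{untwisted} map $\eta'_n\colon\cT_n\to\sD'_n$ is an isomorphism, where $\sD'_n=\Ker\{\sL'_1\otimes\sL'_{n+1}\to\sL'_{n+2}\}$ sits inside the free \emph{quasi}-Lie algebra $\sL'$. Your assertion that this ``computes the kernel and cokernel of $\eta_n$ in every degree'' and that after recasting ``$\eta_n$ becomes (essentially) the map studied by Levine'' conflates two genuinely different maps between different groups: $\cT^\iinfty_n$ differs from $\cT_n$ by extra generators ($\iinfty$-trees, in even $n$) and extra relations (boundary-twist in odd $n$; symmetry, twisted IHX, and interior-twist in even $n$), while $\sD_n$ differs from $\sD'_n$ by the $2$-torsion kernel of the quotient $\sL'\to\sL$. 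Deducing the statement about $\eta_n$ from CST3's statement about $\eta'_n$ is precisely the content of the proof in \cite{WTCCL}: one must compare these two discrepancies, in odd and even degrees separately, to show that the residual contribution to $\Ker\eta_n$ survives only in degrees $n\equiv 2\,\mod 4$, where the symmetric $\iinfty$-trees $(J,J)^\iinfty$ exist (cf.\ Proposition~\ref{prop:kerEta4k-2}). Your ``certain doubling map'' gestures at this phenomenon, but the connective between CST3's theorem and the statement about $\eta_n$ is exactly where the argument lives, and it is not supplied here.
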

The remaining cases to complete the classification are more complicated, as can already be seen for $n=2$: In the case $m=1$ of knots, Lemma~\ref{lem:Arf} below shows that the Arf invariant induces an isomorphism $\W^\iinfty_2(1) \cong\Z_2$, whereas all Milnor invariants vanish for knots.

Unlike for $n\equiv 0,1,3\mod 4$, where $\Ker(\eta_n)=0$,  there are some obvious elements in $\Ker(\eta_{4k-2})$, namely those of the form ${\displaystyle\iinfty}\!\!-\!\!\!\!\!\!-\!\!\!\!<^{\,\,J}_{\,\,J}$ for an order $k-1$ rooted tree $J$: These are 2-torsion by the interior-twist and IHX relations in $\cT^\iinfty_{4k-2}$ and hence must map to zero in the (torsion-free) group $\sD_{4k-2}$. In \cite{WTCCL} we also deduce the following result from the affirmation of Levine's conjecture:

\begin{prop}[\cite{WTCCL}]\label{prop:kerEta4k-2}
The map sending $1\otimes J $ to $ \iinfty\,-\!\!\!\!\!-\!\!\!\!<^{\,J}_{\,J}\,\,\in\cT^\iinfty_{4k-2}$ for rooted trees $J$ of order $k-1$ defines an isomorphism:
\[
\Z_2 \otimes \sL_k \cong\Ker(\eta_{4k-2})
\]
\end{prop}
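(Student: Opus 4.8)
The goal is to show that the assignment $1 \otimes J \mapsto \iinfty\,-\!\!\!\!\!-\!\!\!\!<^{\,J}_{\,J}$ (for $J$ a rooted tree of order $k-1$) induces an isomorphism $\Z_2 \otimes \sL_k \cong \Ker(\eta_{4k-2})$. The proof breaks naturally into three parts: (1) the assignment is well-defined as a map from $\Z_2 \otimes \sL_k$ into $\cT^\iinfty_{4k-2}$ with image landing in $\Ker(\eta_{4k-2})$; (2) the image is all of $\Ker(\eta_{4k-2})$; (3) the map is injective.

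\medskip

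\emph{Plan.} First I would verify well-definedness: the tree $\iinfty\,-\!\!\!\!\!-\!\!\!\!<^{\,J}_{\,J}$ equals $J^\iinfty \cdot$(its mirror) only up to the structure of $\cT^\iinfty_{4k-2}$; using the interior-twist relation $2\cdot J^\iinfty = \langle J, J\rangle$ together with IHX one checks that $J \mapsto \iinfty\,-\!\!\!\!\!-\!\!\!\!<^{\,J}_{\,J}$ is additive in $J$, is annihilated by multiplication by $2$ (so factors through $\Z_2 \otimes \sL_k$), and descends through the Jacobi and self-annihilation relations defining $\sL_k$ — here one uses the twisted IHX relation and the symmetry relation $(-J)^\iinfty = J^\iinfty$. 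That the image lies in $\Ker(\eta_{4k-2})$ is the easy direction already sketched in the excerpt: $\eta_{4k-2}(\iinfty\,-\!\!\!\!\!-\!\!\!\!<^{\,J}_{\,J}) = \tfrac12 \eta_{4k-2}(\langle \langle J, J\rangle, \langle J,J\rangle\rangle)$ — wait, more carefully, $\eta_{4k-2}(J'^\iinfty) = \tfrac12 \eta(\langle J', J'\rangle)$ with $J' = \iinfty\,-\!\!\!\!\!-\!\!\!\!<^{\,J}_{\,J}$ being order $2k-1$, but actually $\iinfty\,-\!\!\!\!\!-\!\!\!\!<^{\,J}_{\,J}$ is an honest (ordinary) tree of order $2(k-1)+1 = 2k-1$... no: it has $2(k-1)+1$ trivalent vertices so order $2k-1$; that's odd, not $4k-2$. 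Let me recount: $\iinfty\,-\!\!\!\!\!-\!\!\!\!<^{\,J}_{\,J}$ where each $J$ has order $k-1$ — gluing gives a $\iinfty$-tree whose underlying rooted tree $-\!\!\!\!\!-\!\!\!\!<^{\,J}_{\,J}$ has order $2(k-1)+1 = 2k-1$, so as a $\iinfty$-tree of that order it represents an element of $\cT^\iinfty_{2(2k-1)} = \cT^\iinfty_{4k-2}$. Good. So $\eta_{4k-2}$ of it is $\tfrac12 \eta(\langle -\!\!\!\!\!-\!\!\!\!<^{\,J}_{\,J},\ -\!\!\!\!\!-\!\!\!\!<^{\,J}_{\,J}\rangle)$; one shows this is zero by antisymmetry — the inner product of a tree with itself, after symmetrization, vanishes because swapping the two $J$'s at the central edge is an orientation-reversing... this is a standard cyclic/antisymmetry argument.

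\medskip

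\emph{Surjectivity onto the kernel.} For the hard part I would use Theorem~\ref{thm:twisted-three-quarters-classification}'s input, namely the resolution \cite{CST3} of Levine's conjecture, packaged as the known structure of $\cT^\iinfty_n$ and the rank of $\sD_n$. Concretely: $\cT^\iinfty_{4k-2}$ fits in a short exact sequence relating it to $\cT_{4k-2}$ and to the ``twisted part'' generated by $\iinfty$-trees of order $2k-1$; the $\iinfty$-trees modulo interior-twist, symmetry and twisted-IHX relations form a group whose free part maps isomorphically (via $\eta$, on a complement) while the $2$-torsion is exactly spanned by the framing elements $\iinfty\,-\!\!\!\!\!-\!\!\!\!<^{\,J}_{\,J}$. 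One identifies the $2$-torsion subgroup of $\cT^\iinfty_{4k-2}$ using the combinatorics of rooted trees modulo IHX (i.e.\ $\sL_{2k}$) and the quadratic refinement supplied by the interior-twist relation; the affirmed Levine conjecture is what guarantees $\eta_{4k-2}$ is injective on the complementary free summand, so $\Ker(\eta_{4k-2})$ is precisely that $2$-torsion subgroup. Then a dimension/rank count over $\F_2$ — comparing $\dim_{\F_2}(\Z_2 \otimes \sL_k) = \mathrm{rank}\,\sL_k$ against the rank of the $\F_2$-vector space of $\iinfty$-trees modulo the linearized relations — shows the map is onto the kernel.

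\medskip

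\emph{Injectivity and the main obstacle.} Injectivity of $\Z_2 \otimes \sL_k \to \cT^\iinfty_{4k-2}$ is then forced by the rank count if surjectivity and well-definedness are established, but to make this rigorous one wants an independent lower bound: I would compose with a retraction $\cT^\iinfty_{4k-2} \to \Z_2 \otimes \sL_k$ sending an $\iinfty$-tree $K^\iinfty$ (for $K$ rooted of order $2k-1$) to the class of $K$ read as a bracket in $\sL_{2k}$... but $\sL_{2k}$ is not obviously $\sL_k$; instead the natural target is $\Z_2 \otimes \sL_k$ via recognizing $-\!\!\!\!\!-\!\!\!\!<^{\,J}_{\,J}$-type trees. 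This is where it gets delicate: one needs a well-defined ``square root'' type map detecting which order-$(2k-1)$ rooted trees are symmetric doubles. The cleanest route is to invoke the explicit computation of $\cT^\iinfty_{4k-2}$ from \cite{WTCCL}/\cite{CST3} directly — i.e.\ cite that the torsion subgroup is known to be $\Z_2 \otimes \sL_k$ — rather than rebuild it. \textbf{The main obstacle} is precisely this: pinning down that $\Ker(\eta_{4k-2})$ contains nothing beyond the framing $\iinfty$-trees, which rests entirely on the injectivity half of Theorem~\ref{thm:twisted-three-quarters-classification}-type results, i.e.\ on Levine's conjecture; and separately, checking that the framing elements $\iinfty\,-\!\!\!\!\!-\!\!\!\!<^{\,J}_{\,J}$ are themselves $\Z_2$-linearly \emph{independent} in $\cT^\iinfty_{4k-2}$ (not collapsed by twisted-IHX), which is the genuinely combinatorial lemma and should be proved by exhibiting an explicit $\F_2$-valued pairing or by comparison with the associated graded of the framed filtration.
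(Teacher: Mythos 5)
The paper does not prove Proposition~\ref{prop:kerEta4k-2}; it is imported from \cite{WTCCL}, and the only in-text indication of a proof is the remark that the elements $\iinfty\,-\!\!\!\!\!-\!\!\!\!<^{\,J}_{\,J}$ are $2$-torsion in $\cT^\iinfty_{4k-2}$ (by the interior-twist and IHX relations) and hence land in $\Ker(\eta_{4k-2})$ because $\sD_{4k-2}$ is torsion-free, together with the statement that the full identification of the kernel follows from the affirmation of Levine's conjecture in \cite{CST3}. So there is no internal proof to compare against; your plan should be judged on its own merits and against what we know about the \cite{WTCCL} argument.

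On the easy direction, your route is more roundabout and, as written, not correct. You try to show $\eta_{4k-2}(\iinfty\,-\!\!\!\!\!-\!\!\!\!<^{\,J}_{\,J})=\tfrac12\eta\bigl(\langle (J,J),(J,J)\rangle\bigr)=0$ by a ``standard cyclic/antisymmetry argument,'' but the reflection of $\langle (J,J),(J,J)\rangle$ across the central edge is in fact orientation-\emph{preserving} (it flips orientations at an even number of trivalent vertices), so it does not directly force cancellation in the $\eta$-sum. The clean argument, which the paper sketches, is structural: apply the interior-twist relation $2\cdot (J,J)^\iinfty=\langle (J,J),(J,J)\rangle$, then apply IHX to the central edge with all four branches equal to $J$ to get $\langle (J,J),(J,J)\rangle=\langle (J,J),(J,J)\rangle-\langle (J,J),(J,J)\rangle=0$; this shows $2\cdot (J,J)^\iinfty=0$, and since $\sD_{4k-2}\subseteq\sL_1\otimes\sL_{4k-1}$ is a subgroup of a free abelian group it has no torsion, forcing $\eta_{4k-2}((J,J)^\iinfty)=0$.

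Your well-definedness sketch also glosses over the step that actually carries content: the map $J\mapsto (J,J)^\iinfty$ is \emph{not} linear in $J$, so showing it descends through the IHX relation in $\sL_k$ (i.e.\ that $I=H-X$ implies $(I,I)^\iinfty+(H,H)^\iinfty+(X,X)^\iinfty=0$ in the $2$-torsion setting) does not follow by ``expanding.'' The twisted IHX relation operates on a \emph{single} trivalent vertex of an order-$(2k-1)$ rooted tree, not on the order-$(k-1)$ subtree $J$ doubled into both slots; threading this through requires a genuine computation. You are right that this is where the work is.

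For surjectivity and injectivity, you correctly pin the dependence on the resolution of Levine's conjecture and on the explicit structure of $\cT^\iinfty_{4k-2}$ from \cite{WTCCL}/\cite{CST3}, and you correctly flag the $\F_2$-linear independence of the framing elements as the remaining combinatorial lemma. But as proposed, parts (2) and (3) essentially say ``cite that the torsion subgroup is $\Z_2\otimes\sL_k$,'' which is circular with respect to the statement being proved. The actual proof in \cite{WTCCL} passes through Levine's quasi-Lie algebra $\sL'$: the discrepancy between the Lie and quasi-Lie bracket maps in degree $k$ is exactly $\Z_2\otimes\sL_k$, and the affirmed Levine conjecture (that $\eta'_n\colon\cT_n\to\sD'_n$ is an isomorphism) lets one chase a diagram that isolates $\Ker(\eta_{4k-2})$ as precisely this discrepancy. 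Your proposal gestures at this but does not set up the commutative diagram that makes the rank/torsion bookkeeping rigorous; that diagram is the missing idea.
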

Here the identification of rooted order $k-1$ trees with degree $k$ Lie brackets is as in Section~\ref{subsec:intro-eta-map} above (see the examples following Definition~\ref{def:eta}).
It follows that $\Z_2 \otimes \sL_k$ is also an upper bound on
the kernels of the epimorphisms $R^\iinfty_{4k-2}: \cT^\iinfty_{4k-2}\sra \W^\iinfty_{4k-2}$ and $\mu_{4k-2}: \W^\iinfty_{4k-2} \sra \sD_{4k-2}$, and the calculation of $\W^\iinfty_{4k-2}$ is completed by invariants defined on the kernel of 
$\mu_{4k-2}$ which are the above-mentioned higher-order Arf invariants, as we describe next.

\subsection{Higher-order Arf invariants}\label{subsec:intro-higher-order-arf}
Let us first discuss the situation for order $n=2$. Observe that ${\displaystyle \iinfty}\!\!-\!\!\!\!\!-\!\!\!\!<^{\,\,1}_{\,\,1}$ is not zero in $\cT^\iinfty_2(1)$
but that
$^{\,\,1}_{\,\,1}>\!\!\!-\!\!\!\!-\!\!\!\!\!-\!\!\!<^{\,\,1}_{\,\,1}\,=0$ by the IHX relation; so $\cT^\iinfty_2(1)$ is generated by ${\displaystyle \iinfty}\!\!-\!\!\!\!\!-\!\!\!\!<^{\,\,1}_{\,\,1}$, which is $2$-torsion, and $\tau^\iinfty_2(\cW)$ counts (modulo $2$) the framing obstructions
on the Whitney disks in an order $2$ twisted Whitney tower $\cW$. This is explained in Section~\ref{sec:proof-lem-Arf}, which gives a proof of the following result:
\begin{lem} \label{lem:Arf}
Any knot $K$ bounds a twisted Whitney tower $\cW$ of order $2$ and the classical Arf invariant of $K$ can be identified with the intersection invariant
\[
 \tau^\iinfty_2(\cW) \in \cT^\iinfty_2(1) \cong \Z_2
\]
More generally, the classical Arf invariants of the components of an $m$-component link give an isomorphism
\[
\Arf: \Ker(\mu_2:\W^\iinfty_2\sra \sD_2) \overset{\cong}{\to} (\Z_2\otimes \sL_1) \cong (\Z_2)^m
\]
\end{lem}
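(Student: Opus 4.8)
The plan is to reduce to the case $m=1$ of a single knot, where the geometric content lies, and then deduce the general statement by assembling Corollary~\ref{cor:Milnor invariants}, Proposition~\ref{prop:kerEta4k-2}, and the twisted realization map $R^\iinfty_2$.

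\emph{The knot case.} A knot $K$ bounds a $0$-framed properly immersed disk $D_1\subset B^4$ (push a Seifert surface in and compress along a half-basis of its first homology), so its self-intersections occur in algebraically cancelling pairs, and choosing Whitney disks for these pairs yields an order~$1$ framed Whitney tower $\cW_1$. A Matsumoto-type computation (cf.\ \cite{Ma} and, in the Whitney tower setting, \cite{ST2,CST0}) evaluates $\tau_1(\cW_1)\in\cT_1(1)$: the only unpaired order~$1$ intersections of $\cW_1$ are the points where its Whitney disks meet $D_1$ (intersections among Whitney disks have order~$2$, and their $H$-trees vanish by IHX), and the parity of the number of these points is $\arf(K)$, so $\tau_1(\cW_1)=\arf(K)\cdot Y$ with $Y$ the generator of $\cT_1(1)\cong\Z_2$. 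Since $Y=0$ in $\cT^\iinfty_1(1)$ by the boundary-twist relation, we have $\cT^\iinfty_1(1)=0$ and $\tau^\iinfty_1(\cW_1)=0$, so by Theorem~2.10 of \cite{WTCCL} the knot $K$ bounds an order~$2$ twisted Whitney tower. To pin down its invariant, apply a boundary-twist to each unpaired $W\cap D_1$ point: this removes the intersection at the price of a $\pm1$ interior twist on the Whitney disk $W$, converting $\cW_1$ into an order~$2$ twisted Whitney tower $\cW$ in which each such point is now recorded by a twisted order~$1$ Whitney disk pairing $D_1\cap D_1$; its contribution to $\tau^\iinfty_2(\cW)$ is the $\iinfty$-tree $J^\iinfty={\scriptstyle \iinfty}-\!\!\!\!\!-\!\!\!<^{\,1}_{\,1}$, where $J$ is the order~$1$ rooted tree both of whose leaves are labeled $1$. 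Summing over all such points gives $\tau^\iinfty_2(\cW)=\arf(K)\cdot J^\iinfty$. Finally $\cT^\iinfty_2(1)\cong\Z_2$: it is generated by $J^\iinfty$, which is $2$-torsion because $2\,J^\iinfty=\langle J,J\rangle$ and $\langle J,J\rangle$ is the $H$-tree, zero by IHX; and it is nonzero since $\arf$ of the trefoil equals $1$. Independence of the chosen order~$2$ twisted Whitney tower follows from \cite{WTCCL,CST0}: for a knot there is no lower-order indeterminacy (the groups $\cT^\iinfty_0(1)$ and $\cT^\iinfty_1(1)$ record nothing beyond the base framing, which we normalized to $0$), so $\tau^\iinfty_2$ is a well-defined invariant of $K$, necessarily equal to $\arf(K)$ by the construction above together with the classical nontriviality of $\arf$. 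This proves the first two assertions.

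\emph{The general case.} By Corollary~\ref{cor:Milnor invariants}, $\eta_2=\mu_2\circ R^\iinfty_2$ with $R^\iinfty_2$ surjective, hence $\Ker(\mu_2)=R^\iinfty_2\bigl(\Ker(\eta_2)\bigr)$. By Proposition~\ref{prop:kerEta4k-2} with $k=1$, $\Ker(\eta_2)\cong\Z_2\otimes\sL_1\cong(\Z_2)^m$, generated by the $\iinfty$-trees ${\scriptstyle \iinfty}-\!\!\!\!\!-\!\!\!<^{\,i}_{\,i}$ for $i=1,\dots,m$; so $\Ker(\mu_2)$ is generated by the elements $e_i:=R^\iinfty_2\bigl({\scriptstyle \iinfty}-\!\!\!\!\!-\!\!\!<^{\,i}_{\,i}\bigr)$, each of order dividing $2$, giving a surjection $q\colon(\Z_2)^m\twoheadrightarrow\Ker(\mu_2)$. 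Dually, $[L]\mapsto(\arf L_1,\dots,\arf L_m)$ defines a homomorphism $A\colon\W^\iinfty_2\to(\Z_2)^m$: it is additive under band sum because a band connected sum of links separated by a sphere matches labels, so the $i$-th component of $L\#_b L'$ is $L_i\# L'_i$ and $\arf$ is additive under connected sum; and it descends to $\W^\iinfty_2$ because, by \cite{WTCCL,CST0}, an order~$3$ twisted Whitney tower concordance has vanishing order~$2$ intersection invariant, so gluing it to an order~$2$ twisted Whitney tower on $L$ produces one on $L'$ with the same $\tau^\iinfty_2$, hence with the same component Arf invariants by the knot case. It remains to compute $A\circ q$ on generators. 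The class $R^\iinfty_2\bigl({\scriptstyle \iinfty}-\!\!\!\!\!-\!\!\!<^{\,i}_{\,i}\bigr)$ is represented by the split union of the trefoil, placed as the $i$-th component, with $m-1$ unknots (a boundary link of the type produced by Lemma~\ref{lem:Bing} for $k=1$), which bounds the order~$2$ twisted Whitney tower built from the knot-case tower $\cW$ on the $i$-th component and embedded disks on the others, with $\tau^\iinfty_2={\scriptstyle \iinfty}-\!\!\!\!\!-\!\!\!<^{\,i}_{\,i}$. Hence $A(e_i)$ is the $i$-th standard basis vector, so $A\circ q=\id$; therefore $q$ is an isomorphism, $\Ker(\mu_2)\cong(\Z_2)^m$, and $A$ restricts to its inverse, the desired isomorphism $\Arf\colon\Ker(\mu_2)\overset{\cong}{\to}(\Z_2)^m$.

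\emph{The main obstacle.} The only genuinely nonformal input is the knot-case identification $\tau_1(\cW_1)=\arf(K)\cdot Y$, i.e.\ the Matsumoto-type count that the interior intersections of the Whitney disks with $D_1$ have parity $\arf(K)$, together with the companion fact that $\arf$ obstructs raising the order past~$2$, which is what makes $\tau^\iinfty_2$ well-defined on knots; both are of Matsumoto/Rohlin type and are available from \cite{Ma,Ro,ST2,CST0}. The remainder is bookkeeping with the presentations of $\cT^\iinfty_1$ and $\cT^\iinfty_2$, the boundary-twist and interior-twist relations, and the already-established surjections $R^\iinfty_2$, $\eta_2$, and $\mu_2$.
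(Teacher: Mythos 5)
Your overall plan tracks the paper's: use the Matsumoto--Rohlin count to identify the classical Arf invariant with the twisting count after boundary-twisting, then leverage Corollary~\ref{cor:Milnor invariants} and Proposition~\ref{prop:kerEta4k-2} for the link case. The link case you give is a slightly more algebraic repackaging (realization map $R^\iinfty_2$, $\Ker(\eta_2)$, and a retraction $A\circ q=\id$) of what the paper does directly (reading $\tau^\iinfty_2(\cW)$ off and noting it lies in the span of the $(i,i)^\iinfty$'s via Theorem~\ref{thm:Milnor invariant}); both are fine once the knot case is in hand.

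However, there is a real gap in your knot-case independence step. You assert: ``for a knot there is no lower-order indeterminacy (the groups $\cT^\iinfty_0(1)$ and $\cT^\iinfty_1(1)$ record nothing beyond the base framing)\ldots so $\tau^\iinfty_2$ is a well-defined invariant of $K$.'' This does not follow. The smallness of $\cT^\iinfty_0(1)$ and $\cT^\iinfty_1(1)$ tells you nothing about whether two \emph{different} order~$2$ twisted Whitney towers bounded by the same knot yield the same class in $\cT^\iinfty_2(1)\cong\Z_2$: $\tau^\iinfty_2(\cW)$ is \emph{a priori} an invariant of the tower, not of the knot, and the obstruction theory only gives a one-way implication (vanishing implies order-raising). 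In fact, using the structure theorems of \cite{WTCCL} to get independence here would be circular, since it is precisely Lemma~\ref{lem:Arf} that pins down $\W^\iinfty_2$ in this order. What actually breaks the circle in the paper is the independent, Seifert-surface computation (together with the vanishing of the analogous homotopy invariant for immersed $2$--spheres in $S^4$, i.e.\ the Rohlin/Matsumoto input you correctly name in your ``main obstacle'' paragraph): for \emph{any} weak order~$1$ Whitney tower bounded by $K$, the sum mod~$2$ of order~$1$ intersections and framing obstructions equals the classical $\arf(K)$. That formula simultaneously computes $\tau^\iinfty_2(\cW)$ and establishes tower-independence, because $\arf(K)$ is intrinsic. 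You should replace your ``no lower-order indeterminacy'' reasoning with this formula; as written, that sentence is a non-sequitur. Your well-definedness of $A$ in the link case inherits the same dependence.

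Two smaller points of terminology and geometry: a boundary-twist does not ``remove'' an intersection; it \emph{creates} a new one of opposite sign (making a cancelling pair, subsequently paired by an order~$2$ Whitney disk) while changing $\omega(W)$ by $\pm1$. And the resulting $\pm1$ change in $\omega(W)$ is what this paper calls the ``twisting,'' not an ``interior twist'' --- in the paper's usage, an interior twist (Definition~\ref{def:T-infty}, item (iv)) is the operation introducing a self-intersection in a Whitney disk and changing $\omega$ by $\mp2$, which is a different move. Finally, you use the trefoil where the paper uses the figure-eight; harmless, since both have Arf one.
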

This lemma verifies our conjecture $ \W^\iinfty_n \cong\cT^\iinfty_n$ from Conjecture~\ref{conj:Arf-k} below in the case $n=2$, with $\Ker(\eta_2)\cong\Ker(\mu_2) \cong 
(\Z_2)^m$.

Following and expanding upon \cite{WTCCL}, we will now describe a similarly satisfying picture for all orders of the form $n=4k-2$ that takes both the Milnor and Arf invariants into account. 

Let $\sK^\iinfty_{4k-2}$ denote the kernel of $\mu_{4k-2}$. It follows from Corollary~\ref{cor:Milnor invariants} and Proposition~\ref{prop:kerEta4k-2} above that mapping $1\otimes J$ to 
$R^\iinfty_{4k-2}( \iinfty\,-\!\!\!\!\!-\!\!\!<^{\,J}_{\,J}\,\,)$ induces a surjection $\alpha^\iinfty_k: \Z_2 \otimes \sL_k
\sra \sK^\iinfty_{4k-2}$, for all $k\geq 1$.
Denote by $\overline{\alpha^\iinfty_{k}}$ the induced isomorphism on $(\mathbb Z_2\otimes {\sf L}_{k})/\Ker \alpha^\iinfty_{k}$.

\begin{defn}[\cite{WTCCL}]\label{def:Arf-k}  
The \emph{higher-order Arf invariants} are defined by
$$
\Arf_{k}:=(\overline{\alpha^\iinfty_{k}})^{-1}:\sK^\iinfty_{4k-2}\to(\mathbb Z_2\otimes {\sf L}_{k})/\Ker \alpha^\iinfty_{k}
$$
\end{defn}
From Theorem~\ref{thm:twisted-three-quarters-classification}, Proposition~\ref{prop:kerEta4k-2} and Definition~\ref{def:Arf-k} we see that the groups $\W^\iinfty_n$ are computed by the Milnor and higher-order Arf invariants.

We conjectured in \cite{CST0,WTCCL} that $\alpha^\iinfty_k$ is an isomorphism, which would mean that the 
$\Arf_k$ are very interesting new concordance invariants:
\begin{conj}[\cite{CST0,WTCCL}]\label{conj:Arf-k}
 $\Arf_{k}:\sK^\iinfty_{4k-2}\to\mathbb Z_2\otimes {\sf L}_{k}$ is an isomorphism for all $k$.
\end{conj}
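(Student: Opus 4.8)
Since Definition~\ref{def:Arf-k} defines $\Arf_k$ as $(\overline{\alpha^\iinfty_k})^{-1}$, where $\overline{\alpha^\iinfty_k}$ is the isomorphism induced by the surjection $\alpha^\iinfty_k\colon\Z_2\otimes\sL_k\sra\sK^\iinfty_{4k-2}$ on the quotient by $\Ker\alpha^\iinfty_k$, the entire content of the conjecture is the assertion that $\Ker\alpha^\iinfty_k=0$. Unwinding this through Corollary~\ref{cor:Milnor invariants} and Proposition~\ref{prop:kerEta4k-2}, and using that $R^\iinfty_{4k-2}$ and band-sum are additive, the plan is to prove the equivalent geometric statement: for every nonzero $\xi\in\Z_2\otimes\sL_k$, a link $L_\xi$ realizing $\xi$ under the ``Bing-doubling along trees'' construction (a sum of $\iinfty$-trees $\iinfty\,-\!\!\!\!\!-\!\!\!<^{\,J}_{\,J}$ with internal band sums where indices repeat) does \emph{not} bound an order $4k-1$ twisted Whitney tower in $B^4$; equivalently, the classes $R^\iinfty_{4k-2}(\iinfty\,-\!\!\!\!\!-\!\!\!<^{\,J}_{\,J})\in\W^\iinfty_{4k-2}$ are $\Z_2$-linearly independent as $1\otimes J$ ranges over a basis of $\Z_2\otimes\sL_k$. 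For $k=1$ this is exactly Lemma~\ref{lem:Arf}, where the detecting invariant is the classical Arf invariant of the link components; so the task for $k\ge 2$ is to manufacture a ``higher-order Arf'' obstruction that plays the same role.

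One would like to define such an obstruction intrinsically: given $L\in\sK^\iinfty_{4k-2}$, pick an order $4k-2$ twisted Whitney tower $\cW$ bounded by $L$; then $\mu_{4k-2}(L)=\eta_{4k-2}\circ\tau^\iinfty_{4k-2}(\cW)=0$ by Theorem~\ref{thm:Milnor invariant}, so $\tau^\iinfty_{4k-2}(\cW)$ lies in $\Ker\eta_{4k-2}\cong\Z_2\otimes\sL_k$ (Proposition~\ref{prop:kerEta4k-2}), where it records, packaged by $\iinfty$-trees, the relative Euler numbers of the twisted top-level Whitney disks of $\cW$. The difficulty is that changing $\cW$ among order $4k-2$ towers with fixed boundary changes this class by an element of $\Ker R^\iinfty_{4k-2}$, so the intrinsic invariant is \emph{a priori} well-defined only modulo $\Ker R^\iinfty_{4k-2}\cap\Ker\eta_{4k-2}=\Ker\alpha^\iinfty_k$ — which is the tautological statement. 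Any genuine proof must therefore bring in an invariant external to the Whitney-tower bookkeeping that is shown, by some independent computation, to be nonzero on each $L_\xi$ with $\xi\ne 0$ and to obstruct bounding an order $4k-1$ twisted Whitney tower.

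The most promising route for producing such a detector is to reduce, as in the $k=1$ case, to the classical Arf (equivalently Rohlin) invariant. Using the explicit boundary links realizing the range of $\Arf_k$ from Lemma~\ref{lem:Bing}, or — via Theorem~\ref{thm:geo-k-slice-equals-odd-W} — a link which is $k$-slice but conjecturally not geometrically $k$-slice, one would associate to $L_\xi$ a derived object whose classical Arf/Rohlin invariant is forced to be nontrivial precisely when $\xi\ne 0$: natural candidates are an iterated infection curve extracted from the branching of the trees, a knot obtained by surgering the top twisted Whitney disks of an order $4k-2$ tower, or a spin $4$-manifold assembled from such a tower whose signature modulo $16$ computes the invariant. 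One then shows that an order $4k-1$ twisted Whitney tower for $L_\xi$ would force that derived invariant to vanish. Other external detectors one could try to feed into this scheme include the mod-$2$ reduction of finite-type concordance invariants (the rational ones are exhausted by Milnor invariants by Habegger--Masbaum \cite{HM}, so only $2$-torsion can remain), Casson--Gordon invariants, and $L^2$/von Neumann $\rho$-invariants of the covers governing grope height.

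The hard part — and the reason this remains a conjecture — is that no such nontrivial detector is known for $k\ge 2$. In the intrinsic approach, the required indeterminacy computation is literally equivalent to the conjecture, since $\Ker\alpha^\iinfty_k$ is exactly the ambiguity that must be shown to vanish; and in the reduction approach, every spin $4$-manifold or derived knot one naturally builds from an order $4k-2$ tower tends to be ``too close to standard'' to carry a Rohlin-type signature defect, while the $\rho$-invariants are real-valued and do not visibly see the relevant $\Z_2$. A successful proof will almost certainly require a genuinely new concordance invariant, sensitive to $2$-torsion phenomena beyond those seen by Milnor invariants and the classical Arf invariant, together with its evaluation on the Bing-double links $L_\xi$; conversely, exhibiting a single nonzero $\xi$ for which $L_\xi$ \emph{does} bound an order $4k-1$ twisted Whitney tower would refute the conjecture.
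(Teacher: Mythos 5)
The statement you were asked to prove is labeled \texttt{conj} in the paper precisely because the authors do not prove it: they explicitly say ``It remains an open problem whether $\Arf_k$ is non-trivial for any $k>1$,'' and the only result they establish is the base case $k=1$ (Lemma~\ref{lem:Arf}). Your ``proposal'' is not a proof and does not pretend to be one --- it is an honest and accurate analysis of why the conjecture is open, which is exactly the correct assessment. In particular, your reduction of the conjecture to the injectivity of $\alpha^\iinfty_k$, your observation that the intrinsic Whitney-tower invariant is well-defined only modulo the very kernel one wants to show is zero (so that approach is tautological), and your conclusion that an external detector of $\Z_2$-torsion beyond Milnor and classical Arf is needed, all match the paper's own framing.

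Two small points worth noting against the paper's account. First, the paper records (Proposition~\ref{prop:Arf-2-and-greater}) that triviality of $\Arf_2$ forces triviality of all $\Arf_k$ for $k\ge 2$, so the conjecture is really governed by the single test case $k=2$, i.e.~by whether the Bing-double of the figure-eight knot bounds an order~$7$ twisted Whitney tower; your closing remark that a single counterexample at a given $k$ would refute the conjecture at that $k$ is correct but the paper's reduction is sharper. Second, the paper cites Cha's work~\cite{Cha} showing the links $K^J$ of Lemma~\ref{lem:Bing} are not slice as supporting evidence; this is consistent with but weaker than non-vanishing of $\Arf_k$, and your skepticism that the $\rho$-invariant-type methods behind such results can be sharpened to see the relevant order-$(4k-1)$ $\Z_2$-obstruction is a fair reading of why this evidence falls short of a proof.
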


Conjecture~\ref{conj:Arf-k} would imply that  
$$
\W_{4k-2}^\iinfty\cong \cT^\iinfty_{4k-2} \cong (\Z_2 \otimes \sL_k) \oplus \sD_{4k-2}
$$
where the second isomorphism (is non-canonical and)
already follows from Corollary~\ref{cor:Milnor invariants} and Proposition~\ref{prop:kerEta4k-2} above \cite[Cor.1.12, Prop.1.14]{WTCCL}.
The statement of Conjecture~\ref{conj:Arf-k} is true for $k=1$, by Lemma~\ref{lem:Arf} above, with $\Arf_1=\Arf$ the classical Arf invariant. It remains an open problem whether $\Arf_k$ is non-trivial for any $k>1$.

We have the following specialization of the Bing-doubling construction discussed above Corollary~\ref{cor:Milnor invariants} which realizes symmetric $\iinfty$-trees of the form ${\displaystyle \iinfty}\!\!-\!\!\!\!\!-\!\!\!<^{\,\,J}_{\,\,J}$.  It starts with the fact that any knot with non-trivial Arf invariant represents $R_2^\iinfty({\displaystyle \iinfty}\!\!-\!\!\!\!\!-\!\!\!<^{\,\,1}_{\,\,1})$ by Lemma~\ref{lem:Arf}, then proceeds by applying untwisted Bing-doublings and internal band sums. This has the effect of symmetrically extending both $1$-labeled branches of the original tree into a higher-order twisted Whitney tower, so that the resulting link can be constructed to realize any ${\displaystyle \iinfty}\!\!-\!\!\!\!\!-\!\!\!<^{\,\,J}_{\,\,J}$ (see Section~\ref{subsec:proof-lemma-Bing}). This idea will be used to derive the geometric interpretations of Milnor invariants given in Theorem~\ref{thm:geo-k-slice-after-sums} and Theorem~\ref{thm:mega-k-slice} below.
\begin{lem} \label{lem:Bing}
Let $J$ be any rooted tree of order $k-1$.
By performing iterated untwisted Bing-doublings and interior band sums on the figure-eight knot $K$, a boundary link $K^J$ can be constructed which bounds a twisted Whitney tower $\cW$ of order $4k-2$ such that
\[
\tau^\iinfty_{4k-2}(\cW)= \iinfty\!\!-\!\!\!\!\!-\!\!\!<^{\,\,J}_{\,\,J}  
\]
\end{lem}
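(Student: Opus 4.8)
The plan is to build $K^J$ from the figure-eight knot by the ``Bing-doubling along trees'' construction underlying the realization maps $R^\iinfty_n$ of \cite{WTCCL} (after Cochran \cite{C,C1}), finishing with interior band sums. Since the figure-eight knot $K$ has Arf invariant~$1$, Lemma~\ref{lem:Arf} provides an order~$2$ twisted Whitney tower $\cW_0$ bounded by $K$ with $\tau^\iinfty_2(\cW_0)=\iinfty\!\!-\!\!\!\!\!-\!\!\!<^{\,\,1}_{\,\,1}\in\cT^\iinfty_2(1)$; here the single twisted Whitney disk of $\cW_0$ carries the $\iinfty$-labeled root of the order-one rooted tree $-\!\!\!\!\!-\!\!\!<^{\,\,1}_{\,\,1}$, while its two $1$-labeled leaves are the two sheets of the order-zero disk $D_K$ along which the boundary of that Whitney disk runs. (Any knot of non-trivial Arf invariant would serve; the figure-eight is chosen for concreteness, and because its genus-one Seifert surface makes the boundary-link bookkeeping below transparent.)

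The construction then proceeds as for the realization maps, using two facts about construction moves established in \cite{WTCCL,C1,CST,S2}. First, an untwisted Bing-doubling supported in a collar of the sheet corresponding to a univalent vertex $v$ of a tree of $\tau^\iinfty(\cW)$ yields a new link and twisted Whitney tower whose invariant is obtained from $\tau^\iinfty(\cW)$ by replacing $v$ with a trivalent vertex carrying two new univalent vertices labeled by the two new link components; it introduces no further trees, leaves every twisted Whitney disk --- hence every $\iinfty$-label --- untouched, and raises the order of the tower. Second, an interior band sum of two components merely relabels the corresponding univalent vertices of every tree. Applying such Bing-doublings to $\cW_0$ so as to grow each of the two $1$-labeled branches of its tree symmetrically into a copy of $J$ (with all resulting leaves distinctly labeled), and then performing interior band sums to identify the leaves of the two copies of $J$ with one another, and to identify any leaves repeated within $J$ itself, one arrives at a link $K^J$ bounding a twisted Whitney tower $\cW$ with $\tau^\iinfty(\cW)=\iinfty\!\!-\!\!\!\!\!-\!\!\!<^{\,\,J}_{\,\,J}$: the $\iinfty$ stays at the root throughout, and the underlying rooted tree $\hat J$ --- two copies of $J$ joined at a common trivalent vertex --- has order $1+(k-1)+(k-1)=2k-1$, so that $\tau^\iinfty(\cW)$ lies in $\cT^\iinfty_{2(2k-1)}=\cT^\iinfty_{4k-2}$ and $\cW$ has order $4k-2$, as claimed.

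That $K^J$ is a boundary link follows from the shape of the construction, exactly as in Cochran's realization of Milnor invariants by boundary links \cite{C,C1}: the figure-eight bounds a Seifert surface; the Bing pattern is a boundary link in the solid torus, so each untwisted Bing-doubling can be performed so as to replace the Seifert surface of the doubled component by two disjoint Seifert surfaces in its tubular neighborhood, after tubing the remaining surfaces off that neighborhood --- which is possible because all pairwise linking numbers among the components produced along the way vanish; and the interior band sums can be performed along bands disjoint from a fixed complete system of disjoint Seifert surfaces, each band-summed component then bounding the corresponding band sum of surfaces.

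The main obstacle is the justification, in the \emph{twisted} setting, of the first construction-move fact above: one must verify that an untwisted Bing-doubling supported in a collar of a single sheet --- disjoint from every Whitney disk, and in particular from the twisted one --- changes $\tau^\iinfty$ by nothing more than the expansion of the chosen leaf, producing no correction terms of the kind governed by the twisted-IHX or interior-twist relations and keeping the $\iinfty$-label exactly at the root, so that $\tau^\iinfty_{4k-2}(\cW)$ is \emph{exactly} $\iinfty\!\!-\!\!\!\!\!-\!\!\!<^{\,\,J}_{\,\,J}$, with no extra summands. This rests on the controlled Whitney-tower maneuvers of \cite{CST,S2,WTCCL}; by comparison the boundary-link bookkeeping is routine.
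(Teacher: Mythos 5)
Your proposal follows the paper's approach in outline: start from the order~$2$ twisted Whitney tower bounded by the figure-eight knot (as in Lemma~\ref{lem:Arf}), then iterate untwisted Bing-doubling and interior band sums to grow the $\iinfty$-tree. However, there is a substantive misconception about what the Bing-doubling move does, and that misconception underlies the part of your plan that doesn't match a performable construction.

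Bing-doubling is applied to an \emph{entire link component}, not to a single local sheet of the disk it bounds. When one Bing-doubles a component $L_i$, the disk $D_i$ is converted to a Whitney disk $W_{(i_1,i_2)}$ for the two new order-$0$ disks $D_{i_1},D_{i_2}$ (this is the content of Figure~\ref{Bing-W-disk-in-collar-fig} in the paper). In the intersection forest this replaces \emph{every} $i$-labeled univalent vertex by a trivalent vertex carrying two new leaves, and those two new leaves carry the \emph{same} pair of labels $i_1,i_2$ at every affected vertex. In particular, starting from $\iinfty\!\!-\!\!\!\!\!-\!\!\!<^{\,\,1}_{\,\,1}$, Bing-doubling $K$ necessarily grows \emph{both} $1$-labeled branches simultaneously into $(1',2')$, yielding $\iinfty\!\!-\!\!\!\!\!-\!\!\!<^{\,\,(1',2')}_{\,\,(1',2')}$. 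There is no move that expands a single chosen leaf $v$ while leaving its partner alone; so your plan to ``grow each of the two $1$-labeled branches... into a copy of $J$ (with all resulting leaves distinctly labeled), and then perform interior band sums to identify the leaves of the two copies of $J$ with one another'' describes an unrealizable intermediate state. In the actual construction (paper's Section~\ref{subsec:proof-lemma-Bing}) the symmetric shape $(J,J)^\iinfty$ is preserved automatically at every Bing-doubling step, and band sums are needed only to create repeated labels \emph{within} $J$, never to identify the two copies of $J$ with each other.

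You also correctly flag, but do not supply, the geometric verification that makes the construction work in the twisted setting: that the null-homotopy of $\Bing(K)$ realizes the old order-$0$ disk as an order-$1$ framed Whitney disk with figure-eight boundary, that this Whitney disk is framed precisely because the Bing pattern is untwisted, and that the original clean $+1$-twisted Whitney disk reappears two orders higher with its twisting intact. This is exactly what the paper proves via Figures~\ref{Fig8knot-and-twisted-Wdisk-fig}--\ref{Bing-W-disk-in-collar-fig}, and it is the real content of the lemma: without it one cannot rule out correction terms in $\tau^\iinfty_{4k-2}$. Your order count ($2 + 4(k-1) = 4k-2$) and the boundary-link bookkeeping are fine. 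So the proposal is the right strategy, but the per-vertex model of Bing-doubling needs to be replaced by the per-component model, and the key geometric claim needs to be proven rather than cited.
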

The proof of Lemma~\ref{lem:Bing} given in Section~\ref{subsec:proof-lemma-Bing} can be easily modified to show that this result holds for any knot $K$ with non-trivial classical Arf invariant. It is thus already interesting to ask whether our proposed higher-order Arf invariants $\Arf_k$ can be defined on the cobordism group of boundary links. 
The links $K^J$ of Lemma~\ref{lem:Bing} are known not to be slice by work of J.C. Cha \cite{Cha}, providing evidence supporting our conjecture that $\Arf_k$ is indeed a non-trivial link concordance invariant which represents an obstruction to bounding an order $4k-1$ twisted Whitney tower. 
The following result emphasizes the importance of the first open case $k=2$:
\begin{prop}\label{prop:Arf-2-and-greater}
If $\Arf_2$ is trivial, then $\Arf_k$ is trivial for all $k\geq 2$.
\end{prop}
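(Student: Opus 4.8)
The plan is to exploit the functoriality of the Bing-doubling construction with respect to extending rooted trees, together with the fact that $\Arf_k$ is essentially read off from the surjection $\alpha^\iinfty_k\colon \Z_2\otimes\sL_k\sra\sK^\iinfty_{4k-2}$. Triviality of $\Arf_2$ means precisely that $\alpha^\iinfty_2$ is the zero map, i.e.\ every link $K^J$ of Lemma~\ref{lem:Bing} with $J$ an order~$1$ rooted tree (so $4k-2=6$) in fact bounds an order~$7$ twisted Whitney tower; equivalently $\iinfty\!\!-\!\!\!\!\!-\!\!\!<^{\,J}_{\,J}$ lies in $\Ker R^\iinfty_6$ for all such $J$. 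The goal is to upgrade this to the statement that $\iinfty\!\!-\!\!\!\!\!-\!\!\!<^{\,J}_{\,J}\in\Ker R^\iinfty_{4k-2}$ for every rooted tree $J$ of order $k-1\geq 1$.

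The key step is a \emph{grafting} argument. Given $J$ of order $k-1\geq 2$, choose a trivalent vertex of $J$ adjacent to a univalent vertex, so that $J$ is obtained from an order~$1$ rooted tree $J_0$ by replacing that univalent vertex's label with a subtree; more useful is to write the symmetric double $\iinfty\!\!-\!\!\!\!\!-\!\!\!<^{\,J}_{\,J}$ as the image, under an operation of ``extending two corresponding branches of an order~$6$ symmetric $\iinfty$-tree,'' of $\iinfty\!\!-\!\!\!\!\!-\!\!\!<^{\,J_0}_{\,J_0}$. Geometrically, Lemma~\ref{lem:Bing}'s construction produces $K^J$ from $K^{J_0}$ by performing further iterated untwisted Bing-doublings and internal band sums --- exactly the moves used to define the realization maps $R^\iinfty_n$. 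The point is that these operations are compatible with Whitney tower concordance: if $K^{J_0}$ bounds an order~$7$ twisted Whitney tower (which it does, by the hypothesis $\Arf_2=0$ applied to $R^\iinfty_6$), then applying the same Bing-doubling/band-sum recipe to a Whitney tower of order~$7$ rather than order~$6$ raises the order of the output tower correspondingly, so $K^J$ bounds a twisted Whitney tower of order $4k-1$. Hence $\iinfty\!\!-\!\!\!\!\!-\!\!\!<^{\,J}_{\,J}\in\Ker R^\iinfty_{4k-2}$, and since by Proposition~\ref{prop:kerEta4k-2} these symmetric $\iinfty$-trees generate $\Ker\eta_{4k-2}$ (which surjects onto $\sK^\iinfty_{4k-2}$ via $\alpha^\iinfty_k$), we conclude $\alpha^\iinfty_k=0$, i.e.\ $\Arf_k$ is trivial. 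An induction on $k\geq 2$, feeding the conclusion for $k-1$ into the inductive step (or, more directly, reducing every case to $k=2$ in one stroke since one can grow $J$ from an order~$1$ tree by a single sequence of Bing-doublings), completes the argument.

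The main obstacle is making the ``apply the same recipe to a higher-order tower'' step rigorous: one must verify that untwisted Bing-doubling along a branch and taking internal band sums each raise the order of a twisted Whitney tower in the expected additive way, and that this is well-defined at the level of the concordance classes defining $\W^\iinfty_n$. This is morally built into the construction of the realization maps $R^\iinfty_n$ in \cite{WTCCL}, so the work is to extract from that construction the precise statement that the ``extend-a-branch'' operation sends a link bounding an order~$n$ twisted Whitney tower with intersection tree $t$ to one bounding an order~$(n+j)$ tower with intersection tree the grafted tree, where $j$ is the number of new trivalent vertices. Once that bookkeeping is in place --- together with the symmetry relation $(-J)^\iinfty=J^\iinfty$ to handle orientation subtleties in identifying the grafted tree with $\iinfty\!\!-\!\!\!\!\!-\!\!\!<^{\,J}_{\,J}$ --- the deduction $\Arf_2=0\Rightarrow\Arf_k=0$ is immediate.
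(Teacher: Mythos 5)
Your setup is correct and your plan is the right one (iterated Bing-doubling, following Lemma~\ref{lem:Bing}), but there is a genuine gap in the central step, and the paper's proof closes it with an algebraic ingredient you have not invoked.

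The problem is arithmetic. Going from $J_0$ of order $1$ to $J$ of order $k-1$ requires $k-2$ Bing-doublings. Geometrically, one untwisted Bing-doubling converts an order-$0$ disk $D_i$ into an order-$1$ Whitney disk $W_{(i_1,i_2)}$ (Figure~\ref{Bing-W-disk-in-collar-fig}), so each $i$-labeled univalent vertex in a tree of the intersection forest gets pushed one step deeper. For the \emph{specific} order-$6$ tower $\cV$ of Lemma~\ref{lem:Bing}, whose intersection forest is the single symmetric $\iinfty$-tree $((1,2),(1,2))^\iinfty$ with the doubled label appearing twice, each Bing-doubling raises the order by $4$, which is exactly what matches $4k-2$. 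But your hypothesis hands you only an \emph{unknown} order-$7$ tower $\cW$ for $\Bing(K)$; its intersection forest $t(\cW)$ may have trees with few (conceivably no) $i$-labeled vertices, so all you can assert in general is that Bing-doubling raises the order by a small amount (the paper only claims ``at least~$1$''). That gives $K^J$ bounding an order $\geq 7+(k-2)=k+5$ tower, which for $k\geq 3$ falls short of the required $4k-1$ by an amount that grows without bound. Saying ``raises the order of the output tower correspondingly'' is asserting a factor-of-$4$ gain per doubling that is true of $\cV$ but not of the arbitrary $\cW$ your hypothesis produces.

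The paper closes the gap using Theorem~\ref{thm:twisted-three-quarters-classification} together with the fact that each $K^J$ is a \emph{boundary link} and hence has vanishing Milnor invariants in all orders. Since $\eta_n$ is an isomorphism for $n\equiv 0,1,3\pmod 4$, a boundary link that bounds a twisted Whitney tower of order $4k-1$ automatically bounds one of order $4k+2$ (one passes through $4k-1$, $4k$, $4k+1$, each of which is $\not\equiv 2\bmod 4$, so $\mu_n=0$ forces $\tau^\iinfty_n=0$). Now a single Bing-doubling, adding only $1$ to the order, lands you at $4k+3=4(k+1)-1$, and the induction closes. Your proposal must add this ``free promotion through three orders'' for the arithmetic to work; without it the inductive step fails already at $k=3$ (you get order~$8$ where order~$11$ is needed). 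The base case (getting from order~$7$ to order~$10$ before the first Bing-doubling) needs the same device.

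In short: the geometric Bing-doubling machinery you describe is the right vehicle, but you are missing the algebraic engine — Theorem~\ref{thm:twisted-three-quarters-classification} applied to boundary links — that carries the order across the orders $\not\equiv 2\pmod 4$. Once that is in place, ``raises the order by at least~$1$'' (rather than ``by~$4$'') suffices, and the induction runs as in the paper.
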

As explained in Section~\ref{subsec:proof-prop-Arf-2-and-greater}, which contains a proof of Proposition~\ref{prop:Arf-2-and-greater}, the statement that $\Arf_2$ is trivial is equivalent to the existence of an order $7$ twisted Whitney tower $\cW$ bounded by the Bing-double of a figure-eight knot.

\subsection{Geometrically $k$-slice links}\label{subsec:intro-k-slice}
We conclude this introduction with some new geometric characterizations of Milnor invariants and the higher-order Arf invariants.
See Section~\ref{sec:Milnor-geo-k-slice} for proofs of these results.

Recall (e.g.~from \cite{T2}) that the $2$--complexes known as \emph{gropes}, are ``geometric embodiments'' of iterated commutators in the sense that a loop in a topological space represents a $k$-fold commutator in the fundamental group if and only if it extends to a continuous map of a grope of class $k$ (see Section~\ref{section:twisted-towers-and-gropes} and e.g.~\cite{CT1,CT2,CST,FQ,FT2,Kr,KT,S1,T1}). Since Milnor invariants measure how deeply the link longitudes extend into the lower central series of the link group, Milnor invariants obstruct bounding \emph{immersed} gropes in $S^3$ essentially by definition. On the other hand, extracting information on bounding \emph{embedded} gropes in the $4$--ball from the vanishing of Milnor invariants is much more difficult.
Embedded framed gropes have usefully served as ``approximations'' to embedded disks in many topological settings (see e.g.~\cite{T2}).

Perhaps the most notable geometric ``if and only if'' characterization of Milnor invariants to date is the \emph{$k$-slice Theorem}, due to Igusa and Orr:
Expressed in the language of gropes, a link $L\subset S^3$ is said to be \emph{$k$-slice} if the link components $L_i$ bound disjointly embedded (oriented) surfaces $\Sigma_i\subset B^4$ such that a symplectic basis of curves on each $\Sigma_i$ 
bound class $k$ gropes immersed in the complement of $\Sigma := \cup_i\Sigma_i$. 
Via a very careful analysis of the third homology of the nilpotent quotients $F/F_k$ of the (rank $m$) free group $F$, Igusa and Orr \cite{IO} proved the following result.
\begin{thm}[\cite{IO}]
A link $L$ is $k$-slice if and only if $\mu_n(L)=0$ for all $n\leq 2k-2$ (equivalently, all Milnor invariants of length $\leq 2k$ vanish). 
\end{thm}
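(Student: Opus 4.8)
The plan is to prove the two implications separately, via the translation ``$L$ is $k$-slice'' $\leftrightarrow$ ``all link longitudes lie in $\pi_1(S^3\smallsetminus L)_{2k}$''; in the language of Section~\ref{subsec:intro-Milnor-review} the latter condition says exactly that $L\in\bM_{2k-1}$, i.e.\ that all Milnor invariants of $L$ of length $\le 2k$ vanish. The implication ``$k$-slice $\Rightarrow$ vanishing'' is the elementary half: it realizes the classical principle that bounding a grope forces a loop deep into the lower central series, with the factor of two produced by the symplectic pairing on the slicing surfaces. The reverse implication is the substantive theorem of Igusa and Orr, resting on their computation of the third homology of the nilpotent quotients of a free group.

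\textbf{Forward implication.} Here I would argue as follows. Suppose $L$ is $k$-slice, witnessed by disjointly embedded oriented surfaces $\Sigma_i\subset B^4$ with $\partial\Sigma_i=L_i$ and a symplectic basis $\{a_{ij},b_{ij}\}$ of curves on each $\Sigma_i$ bounding class $k$ gropes immersed in $X:=B^4\smallsetminus\Sigma$, where $\Sigma:=\cup_i\Sigma_i$. Since a loop bounding a class $k$ grope in a space represents an element of the $k$-th term of the lower central series of the fundamental group, each $a_{ij}$ and $b_{ij}$ lies in $\pi_1(X)_k$. Reading the $i$-th longitude $\lambda_i$ (an appropriate pushoff of $\partial\Sigma_i$) as an element of $\pi_1(X)$, it is the image of the boundary word $\prod_j[a_{ij},b_{ij}]$ of $\Sigma_i$, so $\lambda_i\in[\pi_1(X)_k,\pi_1(X)_k]\subseteq\pi_1(X)_{2k}$. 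It remains to transport this into the link exterior: the inclusion $S^3\smallsetminus L\hookrightarrow X$ induces an isomorphism on $H_1$, and a standard application of Stallings' theorem \cite{St} in Dwyer's refined form --- using that the symplectic basis curves bound gropes to control the relevant stage of Dwyer's filtration of $H_2$ --- shows it induces isomorphisms on lower central series quotients through degree $2k$. Hence $\lambda_i\in\pi_1(S^3\smallsetminus L)_{2k}$, which by the Magnus-expansion definition of Milnor invariants recalled in Section~\ref{subsec:intro-Milnor-review} is precisely the vanishing of all Milnor invariants of $L$ of length $\le 2k$.

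\textbf{Reverse implication.} Suppose now that all Milnor invariants of $L$ of length $\le 2k$ vanish, so $L\in\bM_{2k-1}$ and the longitudes already lie in $\pi_1(S^3\smallsetminus L)_{2k}$. Starting from any collection of disjoint surfaces bounded by $L$ in $B^4$ (which exist because the linking numbers vanish), one must improve them so that a symplectic basis bounds class $k$ gropes in the complement. I would follow the strategy of Igusa and Orr: (i) reformulate ``$k$-slice'' as a lifting/extension problem for a map from the link exterior to a space whose fundamental group is the free nilpotent quotient $F/F_{2k}$, so that the obstruction is captured by $H_3$ of the nilpotent quotient $F/F_{2k}$ (more precisely, by a quotient of it that Igusa and Orr describe, fitting into a five-term/Bousfield--Kan sequence); (ii) show, by surgeries on the surfaces and on the grope stages, that this class is the total obstruction and that its value can be prescribed arbitrarily; (iii) invoke Igusa and Orr's computation of $H_3(F/F_q;\Z)$ --- carried out via the bar construction together with a delicate comparison of ranks and $2$-torsion against the free Lie algebra --- to identify the obstruction with the length $2k$ Milnor invariants, hence conclude it vanishes under our hypothesis.

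\textbf{Main obstacle, and the relation to this paper.} The main obstacle is step (iii): the homological computation for nilpotent quotients of the free group and the verification that the obstruction is faithfully recorded by Milnor invariants. No argument internal to the Whitney tower machinery of this paper replaces it. Running the obstruction theory of \cite{WTCCL} together with Theorem~\ref{thm:Milnor invariant} and Theorem~\ref{thm:twisted-three-quarters-classification} shows only that, for $L\in\bM_{2k-1}$, the obstruction to assembling a \emph{twisted Whitney tower} of order $2k-1$ from immersed disks bounded by $L$ reduces to a finite sequence of higher-order Arf invariants (Section~\ref{subsec:intro-higher-order-arf}), the residual obstructions lying in the kernels $\Ker(\eta_j)$ for $j\equiv 2\bmod 4$, $j\le 2k-2$ (Proposition~\ref{prop:kerEta4k-2}); these are invisible to Milnor invariants and need not vanish. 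Already the first of them, the classical Arf invariants of the components (Lemma~\ref{lem:Arf}), shows that a split union of trefoil knots is $2$-slice yet bounds no order $3$ twisted Whitney tower. The $k$-slice condition is genuinely coarser --- the gropes on the symplectic basis need only be \emph{immersed} in the complement of $\Sigma$, so they may intersect one another --- and this is exactly the slack that absorbs the Arf obstruction; it is why ``$k$-slice'' records only Milnor invariants whereas the finer notion of ``geometrically $k$-slice'' also detects the $\Arf_k$, which is the content of Theorem~\ref{thm:geo-k-slice-equals-odd-W} and underlies the new characterizations of Theorem~\ref{thm:geo-k-slice-after-sums} and Theorem~\ref{thm:mega-k-slice}. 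For the purposes of this paper the Igusa--Orr theorem is used as a black box.
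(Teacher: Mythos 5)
The paper offers no proof of this theorem: it is stated as a direct citation to Igusa and Orr \cite{IO} and used as an external input (the surrounding Section~\ref{subsec:intro-k-slice} contrasts it with the finer notion of geometric $k$-sliceness rather than reproving it), so there is no internal argument to compare your sketch against. Your closing paragraph correctly identifies this, and your account of how the Whitney tower machinery of this paper sits \emph{alongside} rather than replaces the Igusa--Orr $H_3$ computation is accurate: the residual obstructions in $\Ker(\eta_{4j-2})$ are realized by the higher-order Arf invariants (Proposition~\ref{prop:kerEta4k-2}, Lemma~\ref{lem:Arf}), and these are absorbed by the merely \emph{immersed} gropes permitted in the $k$-slice definition, becoming visible only in the refined notion of geometric $k$-sliceness (Theorem~\ref{thm:geo-k-slice-equals-odd-W}). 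Your translation $k$-slice $\Leftrightarrow$ longitudes in $\pi_1(S^3\setminus L)_{2k}$ $\Leftrightarrow$ $L\in\bM_{2k-1}$ $\Leftrightarrow$ $\mu_n(L)=0$ for $n\le 2k-2$ is also consistent with the indexing conventions of Section~\ref{subsec:intro-Milnor-review}.

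As a stand-alone outline of the Igusa--Orr argument your sketch is reasonable, but the Dwyer/Stallings step in the forward direction deserves more care than the phrase ``standard application'' suggests. What you actually need is \emph{injectivity} of the induced map $\pi_1(S^3\setminus L)/\pi_1(S^3\setminus L)_{2k}\to\pi_1(X)/\pi_1(X)_{2k}$ in order to pull the longitude back from $\pi_1(X)_{2k}$ into $\pi_1(S^3\setminus L)_{2k}$, and Dwyer's theorem delivers this only once one shows that $H_2(X,S^3\setminus L)$ is generated by closed gropes of class at least $2k$. In the $k$-slice setting the class $k$ gropes on the symplectic basis are immersed and may intersect one another, and there are no caps; assembling the required high-class closed gropes from Clifford tori, normal tori and these immersed pieces is genuinely delicate. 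The paper's Lemma~\ref{lem:grope-duality} carries out an Alexander-duality/grope-duality computation of exactly this flavor, but in the much cleaner setting of disjointly embedded framed capped gropes, and does not apply verbatim. None of this affects your conclusion, since the forward implication is indeed classical and the reverse implication is what \cite{IO} is for; it is simply the one step in your sketch that is glossed rather than routine.
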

The $k$-slice condition says that the link components bound certain immersed gropes in $B^4$ whose embedded bottom stage surfaces are ``algebraic approximations'' of slice disks modulo the $k$th term of the lower central series of the link group.

This leads to the very natural notion of \emph{geometrically $k$-slice} links: These are links for which there is a symplectic basis of curves on 
the embedded bounding surfaces $\Sigma\subset B^4$ that bound \emph{disjointly embedded framed} gropes of class $k$ in $B^4\setminus \Sigma$. 
In Section~\ref{sec:Milnor-geo-k-slice} we describe how the techniques of \cite{S1} 
together with the classification of the twisted Whitney tower filtration in \cite{WTCCL}
can be used to give the following result, which shows that 
the higher-order Arf invariants $\operatorname{Arf}_k$ detect the difference between $k$-sliceness and geometric $k$-sliceness:
\begin{thm}\label{thm:geo-k-slice-equals-odd-W} 
$L$ is geometrically $k$-slice if and only if $\mu_n(L)=0$ for all $n\leq 2k-2$ and $\Arf_n(L)=0$ for all $n\leq \frac{1}{2}k$. 
\end{thm}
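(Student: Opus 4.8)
The plan is to prove both implications by moving back and forth between the geometric language of gropes in $B^4$ and the algebraic/obstruction-theoretic language of twisted Whitney towers, using the classification of the twisted filtration from \cite{WTCCL}. First I would make precise the bridge between the two pictures. A symplectic basis of curves on the embedded surfaces $\Sigma_i\subset B^4$ bounding disjointly embedded framed gropes of class $k$ in $B^4\setminus\Sigma$ can be traded, by the standard grope-to-Whitney-tower conversion of \cite{S1} (see also \cite{CST,S2}), for the statement that $L$ bounds a \emph{framed} Whitney tower of order $2k-1$ in $B^4$ — each doubled pair of a symplectic basis contributes one more ``stage'' of Whitney disks, and class $k$ gropes correspond to order $2k-1$ framed Whitney towers by the usual indexing. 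So the first step is: $L$ geometrically $k$-slice $\iff$ $L\in\bW_{2k-1}$. This is essentially contained in the techniques of \cite{S1} but needs to be stated carefully in our setting, tracking the framing/twisting bookkeeping and the disjointness of the grope stages; I expect this to require the most care among the ``soft'' steps.

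Next I would bring in the comparison between the framed and twisted filtrations. By \cite{WTCCL}, the difference between $\bW_n$ and $\bW^\iinfty_n$ in odd orders $n=2k-1$ is governed precisely by the higher-order Sato-Levine invariants and, after those are accounted for, by the higher-order Arf invariants: concretely, a link in $\bW^\iinfty_{2k-1}$ lies in $\bW_{2k-1}$ after correcting by the obstructions living in $\Ker(\eta_{4j-2})\cong\Z_2\otimes\sL_j$ for the relevant range of $j$, and these obstructions are exactly the $\Arf_j$. Combined with Theorem~\ref{thm:Milnor invariant} (which identifies $\mu_n(L)=\eta_n\circ\tau^\iinfty_n(\cW)$ and shows $L\in\bW^\iinfty_n\Rightarrow L\in\bM_n$) and the $k$-slice Theorem of Igusa–Orr (which says $\mu_n(L)=0$ for $n\le 2k-2$ $\iff$ $L$ bounds the $k$-slice gropes, i.e. $L\in\bW^\iinfty_{2k-1}$ after the grope-to-tower translation on the twisted side), the statement reduces to the following purely filtration-theoretic claim: for $L\in\bW^\iinfty_{2k-1}$, one has $L\in\bW_{2k-1}$ if and only if $\Arf_n(L)=0$ for all $n\le \frac12 k$. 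The range $n\le\frac12 k$ appears because a twisted Whitney disk of order $2n$ sits inside a tower of order $2k-1$ precisely when $4n-2\le 2k-1$, i.e. $n\le\frac{2k+1}{4}$, which is the integer condition $n\le\frac12 k$; I would verify this bookkeeping explicitly.

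The actual argument then runs in two directions. For the ``only if'' direction: if $L$ is geometrically $k$-slice it bounds a framed order $2k-1$ Whitney tower $\cW$; regarding $\cW$ as a twisted Whitney tower with no twisted disks gives $\tau^\iinfty_{2k-1}(\cW)=\tau_{2k-1}(\cW)$, which has no $\iinfty$-tree summands, so its image in $\Ker(\mu_{4n-2})$ under the appropriate projection is zero for each $n\le\frac12 k$, forcing $\Arf_n(L)=0$; and $\mu_n(L)=0$ for $n\le 2k-2$ is immediate from $L\in\bW_{2k-1}\subseteq\bW^\iinfty_{2k-1}\subseteq\bM_{2k-1}$ via Theorem~\ref{thm:pre-Milnor}. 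For the ``if'' direction: the vanishing Milnor invariants together with Igusa–Orr (and Theorem~\ref{thm:Milnor invariant}) put $L$ in $\bW^\iinfty_{2k-1}$; I then climb the twisted tower inductively, and at each order $4n-2$ with $n\le\frac12 k$ where a genuine framing obstruction (an $\iinfty$-tree class) could arise, the hypothesis $\Arf_n(L)=0$ together with Definition~\ref{def:Arf-k} and the construction of Lemma~\ref{lem:Bing} lets me algebraically cancel the $\iinfty$-trees by connect-summing with the boundary links $K^J$ realizing $\iinfty\!\!-\!\!\!\!\!-\!\!\!<^{\,J}_{\,J}$, after which the tower can be pushed up to order $2k-1$ with no twisted disks, i.e. made framed. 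Converting this framed order $2k-1$ Whitney tower back to disjointly embedded framed class $k$ gropes on a symplectic basis (again by \cite{S1}) exhibits $L$ as geometrically $k$-slice.

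The main obstacle, as flagged above, is the grope/Whitney-tower dictionary in the first step: one must be confident that ``disjointly embedded framed class $k$ gropes on a symplectic basis of embedded surfaces $\Sigma$'' is genuinely equivalent to ``$L$ bounds a framed order $2k-1$ Whitney tower,'' including the subtle point that the bottom-stage surfaces $\Sigma_i$ may have positive genus (unlike the order-$0$ disks in the definition of the Whitney tower filtration) — this is handled by the standard trick of capping off and trading genus for extra Whitney-disk stages, but it interacts with the framing data and must be checked not to shift the order. Everything downstream is then an application of the classification in \cite{WTCCL}, Theorem~\ref{thm:Milnor invariant}, Lemma~\ref{lem:Bing}, and the Igusa–Orr theorem.
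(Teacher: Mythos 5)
Your first step — the dictionary between geometric $k$-sliceness and bounding a framed Whitney tower of order $2k-1$ via the grope/tower conversion of \cite{S1} — is exactly right, and it is the main geometric content of the paper's proof as well. But the second half of your argument rests on two misconceptions that together amount to a genuine gap.

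First, you place the higher-order Arf invariants in the wrong slot. You claim that ``the difference between $\bW_n$ and $\bW^\iinfty_n$ in odd orders $n = 2k-1$ is governed \dots\ by the higher-order Arf invariants,'' and you reduce the theorem to the claim that for $L \in \bW^\iinfty_{2k-1}$ one has $L \in \bW_{2k-1}$ iff $\Arf_n(L)=0$. But by Definition~\ref{def:twisted-W-towers}, an odd-order twisted Whitney tower \emph{is by definition} a framed Whitney tower, so $\bW^\iinfty_{2k-1} = \bW_{2k-1}$ as sets of links — that step is a tautology, not a place where $\Arf$ can appear. The $\Arf_n$ actually enter in the step you treat as already known: membership in $\bW^\iinfty_{2k-1}$ itself. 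By the classification (Corollary~1.16 of \cite{WTCCL}, assembled from Theorem~\ref{thm:twisted-three-quarters-classification}, Proposition~\ref{prop:kerEta4k-2} and Definition~\ref{def:Arf-k}), $L$ bounds an order $2k-1$ twisted Whitney tower iff $\mu_n(L)=0$ for $n \le 2k-2$ \emph{and} $\Arf_n(L)=0$ for $n \le k/2$; at each even level $4j-2 < 2k-1$, the order-raising obstruction splits into a Milnor part and an $\Arf$ part, and both must vanish to climb.

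Second, and relatedly, your invocation of Igusa--Orr is incorrect and, in fact, unnecessary. You write that Igusa--Orr gives ``$\mu_n(L)=0$ for $n \le 2k-2$ $\iff$ $L \in \bW^\iinfty_{2k-1}$,'' but Igusa--Orr only produces \emph{immersed} gropes (the $k$-slice condition), which is strictly weaker than bounding an order $2k-1$ twisted Whitney tower. If $\mu$-vanishing alone implied $\bW^\iinfty_{2k-1}$-membership, the $\Arf$ hypotheses in the theorem would be redundant — the whole point of Theorem~\ref{thm:geo-k-slice-equals-odd-W} is that they are not (conjecturally). Concretely, if some $\Arf_j(L) \ne 0$ with $4j-2 \le 2k-2$, then $L$ is still $k$-slice by Igusa--Orr but does \emph{not} bound an order $2k-1$ twisted Whitney tower. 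The paper's proof does not use Igusa--Orr at all for this theorem; it simply combines the classification from \cite{WTCCL} with the grope/tower conversion. The band-summing idea you sketch is also misplaced in this argument (it changes the link); it belongs to the proof of Theorem~\ref{thm:geo-k-slice-after-sums}, not to this theorem. Once you replace your intermediate reduction by the correct one — \textquotedblleft$\mu$- and $\Arf$-vanishing $\iff$ $L \in \bW^\iinfty_{2k-1} = \bW_{2k-1}$ (classification) and $L \in \bW_{2k-1}$ $\iff$ geometrically $k$-slice (grope/tower conversion)\textquotedblright — the argument collapses to the paper's two-step proof.
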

Combining Theorem~\ref{thm:geo-k-slice-equals-odd-W} together with Corollary~\ref{cor:Milnor invariants}, Proposition~\ref{prop:kerEta4k-2} and Lemma~\ref{lem:Bing} we immediately get:
\begin{thm}\label{thm:geo-k-slice-after-sums}
 A link $L$ has $\mu_n(L)=0$ for all $n\leq 2k-2$ if and only if $L$ is geometrically $k$-slice after a finite number of band sums with boundary links.
 $\hfill\square$
 \end{thm}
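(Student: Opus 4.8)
The plan is to deduce both implications from Theorem~\ref{thm:geo-k-slice-equals-odd-W}, using the realization Lemma~\ref{lem:Bing} to absorb the higher-order Arf obstructions into band sums with boundary links. For the ``only if'' direction, suppose $L'=L\#_b B_1\#_b\cdots\#_b B_r$ is geometrically $k$-slice, where the $B_i$ are boundary links and the band sums are performed inside disjoint balls. By Theorem~\ref{thm:geo-k-slice-equals-odd-W}, $\mu_n(L')=0$ for all $n\leq 2k-2$. All Milnor invariants of a boundary link vanish, so each $B_i$ lies in $\bM_n$ for every $n$ with $\mu_n(B_i)=0$; an induction on $n\leq 2k-2$, using the additivity $\mu_n(A\#_b A')=\mu_n(A)+\mu_n(A')$ for links in $\bM_n$ (and the vanishing linking matrix of $L'$ to start the induction) then yields $\mu_n(L)=0$ for all $n\leq 2k-2$.

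For the ``if'' direction, assume $\mu_n(L)=0$ for all $n\leq 2k-2$, i.e.~$L\in\bM_{2k-1}$. The core step is to show that after finitely many band sums with boundary links $L$ bounds an \emph{order $2k-1$ twisted Whitney tower}. Granting this and calling the resulting link $L'$, we have $L'\in\bW^\iinfty_{2k-1}\subseteq\bM_{2k-1}$ by Theorem~\ref{thm:pre-Milnor}, so $\mu_n(L')=0$ for all $n\leq 2k-2$; and since $4\ell-1\leq 2k-1$ whenever $\ell\leq\frac{1}{2}k$, the link $L'$ also bounds an order $4\ell-1$ twisted Whitney tower for each such $\ell$, hence represents $0$ in $\W^\iinfty_{4\ell-2}$ and therefore has $\Arf_\ell(L')=0$. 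Theorem~\ref{thm:geo-k-slice-equals-odd-W} then shows that $L'$ is geometrically $k$-slice, which is exactly the assertion.

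To build the order $2k-1$ tower I would argue by induction, promoting the order one step at a time, starting from an order $1$ twisted Whitney tower (which exists because $\bM_1=\bW^\iinfty_1$). Suppose $L$ — after some band sums with boundary links, which by the additivity above keep $\mu_n=0$ for $n\leq 2k-2$ — bounds an order $j$ twisted Whitney tower $\cW$ with $1\leq j\leq 2k-2$. By Theorem~\ref{thm:Milnor invariant}, $\eta_j(\tau^\iinfty_j(\cW))=\mu_j(L)=0$, so $\tau^\iinfty_j(\cW)\in\Ker\eta_j$. If $j\not\equiv 2\bmod 4$, then $\Ker\eta_j=0$ by Theorem~\ref{thm:twisted-three-quarters-classification}, so $\tau^\iinfty_j(\cW)=0$ and the twisted Whitney tower obstruction theory (Theorem~2.10 of \cite{WTCCL}) promotes $L$ to order $j+1$. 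If $j=4\ell-2$, then by Proposition~\ref{prop:kerEta4k-2} the element $\tau^\iinfty_j(\cW)$ is a finite $\Z_2$-sum of the symmetric $\iinfty$-tree generators indexed by rooted trees $J_i$ of order $\ell-1$; band-summing $L$ with the corresponding boundary links $K^{J_i}$ of Lemma~\ref{lem:Bing} and forming the disjoint union of the towers gives an order $j$ twisted Whitney tower on the band sum whose intersection invariant is twice that same sum, hence $0\in\cT^\iinfty_{4\ell-2}$ since these generators are $2$-torsion, so the obstruction theory again promotes to order $j+1$. Iterating until $j=2k-1$ completes the construction.

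The step I expect to require the most care is this inductive promotion: one must check that band-summing with the boundary links of Lemma~\ref{lem:Bing} exactly cancels the $\Ker\eta_{4\ell-2}$-part of the intersection invariant (via additivity of $\tau^\iinfty$ under band sum together with the $2$-torsion of the symmetric $\iinfty$-trees in $\cT^\iinfty_{4\ell-2}$), that it preserves the vanishing of $\mu_n$ for all $n\leq 2k-2$, and that it simultaneously forces the higher-order Arf invariants $\Arf_\ell$, $\ell\leq\frac{1}{2}k$, to vanish — which is precisely what makes Theorem~\ref{thm:geo-k-slice-equals-odd-W} applicable at the end. Everything else is formal bookkeeping with the graded groups $\cT^\iinfty_n$, $\W^\iinfty_n$, $\sD_n$ and the cited structural results.
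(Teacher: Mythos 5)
Your proof is correct and follows essentially the same route the paper intends, unpacking its ``immediately get'' into the natural induction: band-summing with the $K^{J_i}$ of Lemma~\ref{lem:Bing} to cancel the $\Ker\eta_{4\ell-2}$--part of $\tau^\iinfty$ at each order $j\equiv 2\bmod 4$ (exploiting the $2$--torsion of the symmetric $\iinfty$--trees), using Theorem~\ref{thm:twisted-three-quarters-classification} and the obstruction theory to raise the order at all other $j$, and then invoking Theorem~\ref{thm:geo-k-slice-equals-odd-W} once order $2k-1$ is reached, while additivity of Milnor invariants over boundary-link summands handles the converse. Your inductive scaffolding is indeed what makes the ``immediately'' rigorous, since $\Arf_\ell$ is only defined once the lower Arf obstructions have been killed.
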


It turns out that the operation of taking band sums with boundary links is equivalent to
a certain approximation of being geometrically $k$-slice, as described by the following theorem. The basic observation here is that any curve on a surface in $S^3$ bounds an immersed disk in $B^4$, leading to the surfaces of type $\Sigma''_i$ below, associated to the boundary links in Theorem~\ref{thm:geo-k-slice-after-sums}. 
 We note that the ``only if'' part of the following theorem uses a mild generalization of Theorem~\ref{thm:Milnor invariant}, described in Proposition~\ref{prop:mu=tau-on-immersed-surfaces}.
\begin{thm}\label{thm:mega-k-slice}
A link $L=\cup_i L_i$ has $\mu_n(L)=0$ for all $n\leq 2k-2$
if and only if the link components $L_i$ bound
disjointly embedded surfaces $\Sigma_i$ in the $4$--ball, with each surface a
connected sum of two surfaces $\Sigma'_i$ and $\Sigma''_i$ such that
\begin{enumerate}
\item
 a symplectic basis of curves on $\Sigma'_i$
 bound disjointly embedded framed gropes $G_{i,j}$ of class $k$ in the complement of 
 $\Sigma := \cup_i\Sigma_i$, and 
\item
 a symplectic basis of curves on $\Sigma''_i$ bound immersed disks in the
 complement of 
 $\Sigma\cup G$, where $G$ is the union of all $G_{i,j}$.
\end{enumerate}
\end{thm}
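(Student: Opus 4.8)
The plan is to prove Theorem~\ref{thm:mega-k-slice} by showing that the two-sided decomposition $\Sigma_i = \Sigma'_i \# \Sigma''_i$ is precisely equivalent to the statement of Theorem~\ref{thm:geo-k-slice-after-sums}, that $L$ becomes geometrically $k$-slice after band summing with boundary links. For the ``if'' direction, suppose such surfaces $\Sigma_i = \Sigma'_i\#\Sigma''_i$ exist. The curves on $\Sigma''_i$ bound immersed (not embedded) disks in the complement of $\Sigma\cup G$, and the key observation is that a neighborhood of $\Sigma''_i$ together with these disks can be interpreted as a Seifert-surface-like picture: pushing the $\Sigma''_i$ part back toward $S^3$, the curves on $\Sigma''_i$ trace out a boundary link $B$ (they bound surfaces in $S^3$ hence form a boundary link, and each such curve bounds an immersed disk in $B^4$). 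Then $L$ is band-summed with $B$, and after this band sum the $\Sigma'_i$ pieces together with their embedded framed gropes $G_{i,j}$ witness geometric $k$-sliceness of $L\#_b B$. Invoking Theorem~\ref{thm:geo-k-slice-after-sums} in the reverse direction then gives $\mu_n(L\#_b B)=0$ for $n\le 2k-2$, and since $B$ is a boundary link all its Milnor invariants vanish; by the additivity $\mu_n(L\#_b B)=\mu_n(L)+\mu_n(B)$ we conclude $\mu_n(L)=0$ for $n\le 2k-2$.

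For the ``only if'' direction, assume $\mu_n(L)=0$ for all $n\le 2k-2$. By Theorem~\ref{thm:geo-k-slice-after-sums} there is a boundary link $B$ (we may take $B$ to be a band sum of the links $K^J$ of Lemma~\ref{lem:Bing}, which are boundary links) such that $L' := L\#_b B$ is geometrically $k$-slice. Thus the components $L'_i$ bound disjointly embedded surfaces $\Sigma'_i$ in $B^4$ with a symplectic basis of curves bounding disjointly embedded framed class $k$ gropes $G_{i,j}$ in the complement. Now I would undo the band sum: the band that connects $L_i$ to the corresponding component of $B$ can be pushed into $B^4$ and capped, at the cost of adding handles to $\Sigma'_i$. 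Concretely, since $B$ is a boundary link its components bound disjoint Seifert surfaces $R_i$ in $S^3$; these can be pushed slightly into $B^4$ and attached to $\Sigma'_i$ along the bands, producing the connected summand $\Sigma''_i$. The curves on $\Sigma''_i$ coming from a symplectic basis for $R_i$ live in $S^3$ on a Seifert surface, hence bound immersed disks in $B^4$ (push the Seifert surface piece in and use that any curve in $S^3$ bounds an immersed disk in $B^4$); one arranges these immersed disks to be in the complement of $\Sigma\cup G$ by general position and by choosing the push-in collar to be disjoint from the (finitely many, embedded) gropes $G_{i,j}$. This yields the decomposition $\Sigma_i = \Sigma'_i\#\Sigma''_i$ with properties (1) and (2).

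The main obstacle I anticipate is the bookkeeping in the ``only if'' direction: one must verify that after attaching the Seifert-surface summands $\Sigma''_i$ along the bands, the resulting $\Sigma_i$ are still \emph{disjointly embedded}, and that the symplectic basis of $\Sigma_i$ genuinely splits as (symplectic basis of $\Sigma'_i$) $\sqcup$ (symplectic basis of $\Sigma''_i$). Disjointness of the $\Sigma''_i$ from each other and from the gropes $G$ requires care because the Seifert surfaces $R_i$ of a boundary link are disjoint in $S^3$, but their push-ins and the bands might a priori intersect the embedded gropes $G_{i,j}$; one resolves this by noting the gropes lie in a compact region and pushing the $R_i$-collars into a disjoint collar of $S^3$. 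The splitting of the symplectic basis is automatic from the connected-sum structure of $H_1$: $H_1(\Sigma_i) = H_1(\Sigma'_i)\oplus H_1(\Sigma''_i)$ as symplectic modules under the connected sum. Beyond this, the ``only if'' part also needs the mild generalization of Theorem~\ref{thm:Milnor invariant} recorded as Proposition~\ref{prop:mu=tau-on-immersed-surfaces} to control the Milnor invariants in the presence of the immersed-disk pieces on $\Sigma''_i$ --- specifically to re-derive $\mu_n(L)=0$ when running the argument intrinsically rather than via Theorem~\ref{thm:geo-k-slice-after-sums}; I would present the cleanest route, which is to keep the two directions tied to Theorem~\ref{thm:geo-k-slice-after-sums} and use Proposition~\ref{prop:mu=tau-on-immersed-surfaces} only where the immersed disks are genuinely present.
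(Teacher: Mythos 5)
Your ``only if'' direction (constructing the decomposed surfaces from vanishing Milnor invariants) is essentially the paper's argument: invoke Theorem~\ref{thm:geo-k-slice-after-sums} to find a boundary link $U$ with $L\#_b U$ geometrically $k$-slice, then attach pushed-in Seifert surfaces for $U$ to the geometrically $k$-slicing surfaces to undo the band sum. The paper makes this rigorous by a careful gluing in $S^4$ (placing the immersed disks for the Seifert-surface bases in a ``northern quadrant'' $B^4_+$ and regluing along $B^3_U$), which automatically arranges the disjointness you flag as the main worry; your sketch hand-waves this but points in the right direction.

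Your ``if'' direction, however, has a genuine gap. You propose to reverse the band-sum construction by ``pushing the $\Sigma''_i$ part back toward $S^3$'' so that the separating circles on $\Sigma_i=\Sigma'_i\#\Sigma''_i$ become a boundary link $B\subset S^3$ with Seifert surfaces the pushed-in $\Sigma''_i$. But the hypothesis only gives $\Sigma_i$ \emph{embedded in $B^4$}, and there is no reason why the $\Sigma''_i$ should be ambiently isotopic (rel the connect-sum circle, keeping $\Sigma'_i$ and the gropes $G_{i,j}$ fixed) to pushed-in Seifert surfaces sitting in a collar of $S^3$. In particular, the separating circles live in the interior of $B^4$, and promoting them to a link in $S^3$ that band-sums with $L$ is exactly what needs to be proved, not assumed. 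The paper avoids this entirely by working directly with the given configuration in $B^4$: it views the gropes $G_{i,j}$ attached to $\Sigma'_i$ as class-$2k$ grope branches, finds caps and pushes intersections down into $\Sigma'_i$ so the caps miss the immersed disks on the $\Sigma''_i$, applies the capped-grope-to-Whitney-tower construction of \cite[Thm.~6]{S1} to get an order $2k-1$ twisted Whitney tower $\cW$ on immersed surfaces $S_i\supset\Sigma''_i$, and then applies Proposition~\ref{prop:mu=tau-on-immersed-surfaces} (the version of Theorem~\ref{thm:Milnor invariant} for order-zero \emph{surfaces} whose symplectic basis curves bound immersed disks off $\cW$) to conclude $\mu_n(L)=0$ for $n\le 2k-2$. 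This is the role of Proposition~\ref{prop:mu=tau-on-immersed-surfaces} that your closing paragraph gestures at but doesn't pin down: it is not an optional refinement, it is what makes the ``if'' direction go through without the unjustified push of $\Sigma''_i$ back to the boundary.
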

Theorem~\ref{thm:mega-k-slice} is a considerable strengthening of the above Igusa--Orr $k$-slice Theorem: Since the geometric conditions 
in both theorems are equivalent to the vanishing of Milnor's invariants
through order $2k-2$ (length $2k$), one can read our result as saying that the {\em immersed gropes} of class $k$ found by Igusa and Orr can be cleaned up to immersed {\em disks} (these are immersed gropes of arbitrarily high class) or {\em disjointly embedded} gropes of class $k$.
As explained in Section~\ref{sec:Milnor-geo-k-slice}, the higher-order Arf invariants are exactly the obstructions to eliminating the need for the $\Sigma''_i$ and these immersed disks.



{\bf Acknowledgments:}
This paper was partially written while the first two authors were visiting the third author at the Max-Planck-Institut f\"ur Mathematik in Bonn. They all thank MPIM for its stimulating research environment and generous support. The first author was also supported by NSF grant DMS-0604351, and the last author was also supported by NSF grants DMS-0806052 and DMS-0757312. The second author was partially supported by PSC-CUNY research grant PSCREG-41-386 and a grant (\#208938) from the Simons Foundations. Thanks also to the referee for helpful comments regarding the exposition.

\section{Whitney towers}\label{sec:w-towers}
This section recalls the relevant theory of (twisted) Whitney towers as developed in \cite{CST,WTCCL,S1,ST2}. We work in the \emph{smooth oriented} category (with orientations usually suppressed from notation), even though all our results hold in the locally flat topological category by the basic results on topological immersions in Freedman--Quinn \cite{FQ}. In fact, it can be shown that the filtrations  $\mathbb W_n$ and $\mathbb W^\iinfty_n$ are identical in the smooth and locally flat settings. This is because a topologically flat surface can be promoted to a smooth surface at the cost of only creating unpaired intersections of arbitrarily high order (see Remark~2.1 of \cite{WTCCL}).

\subsection{Operations on trees}\label{subsec:trees}
To describe Whitney towers it is convenient to use the bijective correspondence
between formal non-associative bracketings of elements from
the index set $\{1,2,3,\ldots,m\}$ and
rooted trees, trivalent and oriented as in Definition~\ref{def:Tau},
with each univalent vertex labeled by an element from the index set, except
for the \emph{root} univalent vertex which is left unlabeled. 

\begin{defn}\label{def:trees}
Let $I$ and $J$ be two rooted trees.
\begin{enumerate} 
\item The \emph{rooted product} $(I,J)$ is the rooted tree gotten
by identifying the root vertices of $I$ and $J$ to a single vertex $v$ and sprouting a new rooted edge at $v$.
This operation corresponds to the formal bracket (Figure~\ref{inner-product-trees-fig} upper right). The orientation of $(I,J)$ is inherited from those of $I$ and $J$ as well as the order in which they are glued.

\item The \emph{inner product}  $\langle I,J \rangle $ is the
unrooted tree gotten by identifying the roots of $I$ and $J$ to a single non-vertex point.
Note that $\langle I,J \rangle $ inherits an orientation from $I$ and $J$, and that
all the univalent vertices of $\langle I,J \rangle $ are labeled.
(Figure~\ref{inner-product-trees-fig} lower right.)

\item The \emph{order} of a tree, rooted or unrooted, is defined to be the number of trivalent vertices.
\end{enumerate}
\end{defn}
The notation of this paper will not distinguish between a bracketing and its corresponding rooted tree
(as opposed to the notation $I$ and $t(I)$ used in \cite{S1,ST2}).
In \cite{S1,ST2} the inner product is written as a dot-product, and the rooted product
is denoted by $*$.

\begin{figure}[ht!]
        \centerline{\includegraphics[scale=.40]{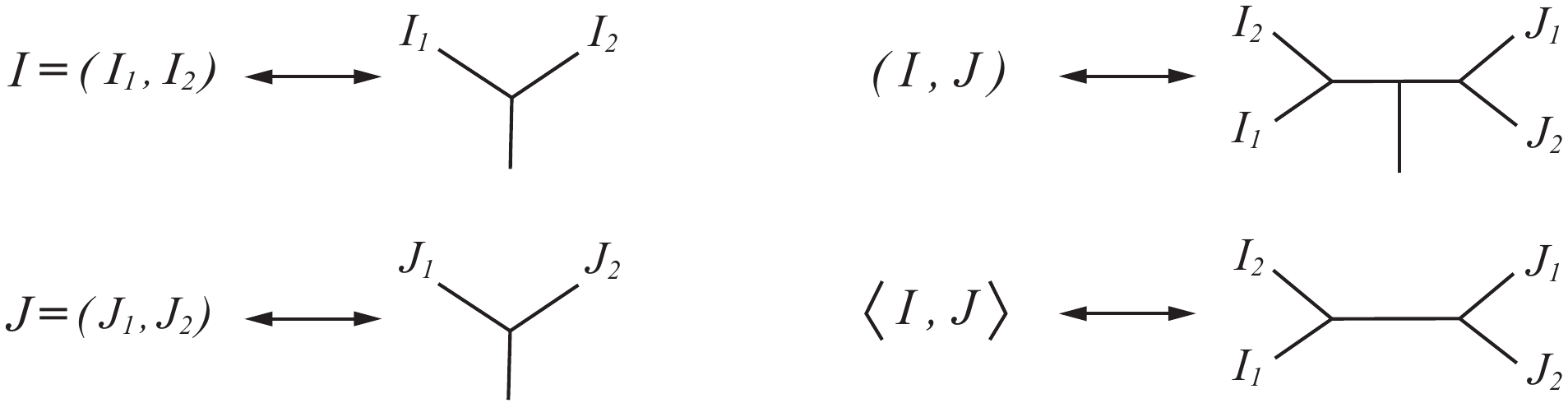}}
        \caption{The \emph{rooted product} $(I,J)$ and \emph{inner product} $\langle I,J \rangle$ of $I=(I_1,I_2)$ and $J=(J_1,J_2)$. All trivalent orientations correspond to a clockwise orientation of the plane.}
        \label{inner-product-trees-fig}
\end{figure}

\subsection{Whitney disks and higher-order intersections}\label{subsec:order-zero-w-towers-and-ints}
\begin{defn}\label{def:framed-order-zero-tower}
A collection $A_1,\ldots,A_m\looparrowright (M,\partial M)$ of 
connected surfaces in a $4$--manifold $M$ is a \emph{Whitney tower of order zero} if the $A_i$ are \emph{properly immersed} in the sense that the boundary is embedded in $\partial M$ and the interior is generically immersed in $M \smallsetminus \partial M$. 
The $A_i$ are also required to be \emph{framed} as discussed in Section~\ref{subsec:twisted-w-disks} below.
\end{defn}

To each \emph{order zero surface} $A_i$ is associated
the order zero rooted tree consisting of an edge with one vertex labeled by $i$, and
to each transverse intersection $p\in A_i\cap A_j$ is associated the order zero
tree $t_p:=\langle i,j \rangle$ consisting of an edge with vertices labelled by $i$ and $j$. Note that
for singleton brackets (rooted edges) we drop the bracket from notation, writing $i$ for $(i)$.

The order 1 rooted $\sY$-tree $(i,j)$, with a single trivalent vertex and two univalent labels $i$ and $j$,
is associated to any Whitney disk $W_{(i,j)}$ pairing intersections between $A_i$ and $A_j$. This rooted tree
can be thought of as being embedded in $M$, with its trivalent vertex and rooted
edge sitting in $W_{(i,j)}$, and its two other edges descending into $A_i$ and $A_j$ as sheet-changing paths. (The cyclic orientation at the trivalent vertex of the bracket  $(i,j)$  corresponds to an orientation of $W_{(i,j)}$ via a convention described below in \ref{subsec:w-tower-orientations}.)

Recursively, the rooted tree $(I,J)$ is associated to any Whitney disk $W_{(I,J)}$ pairing intersections
between $W_I$ and $W_J$ (see left-hand side of Figure~\ref{WdiskIJandIJKint-fig}); with the understanding that if, say, $I$ is just a singleton $i$, then $W_I$ denotes the order zero surface $A_i$.

To any transverse intersection $p\in W_{(I,J)}\cap W_K$ between $W_{(I,J)}$ and any
$W_K$ is associated the tree $t_p:=\langle (I,J),K \rangle$  (see right-hand side of Figure~\ref{WdiskIJandIJKint-fig}).

\begin{figure}[ht!]
        \centerline{\includegraphics[width=120mm]{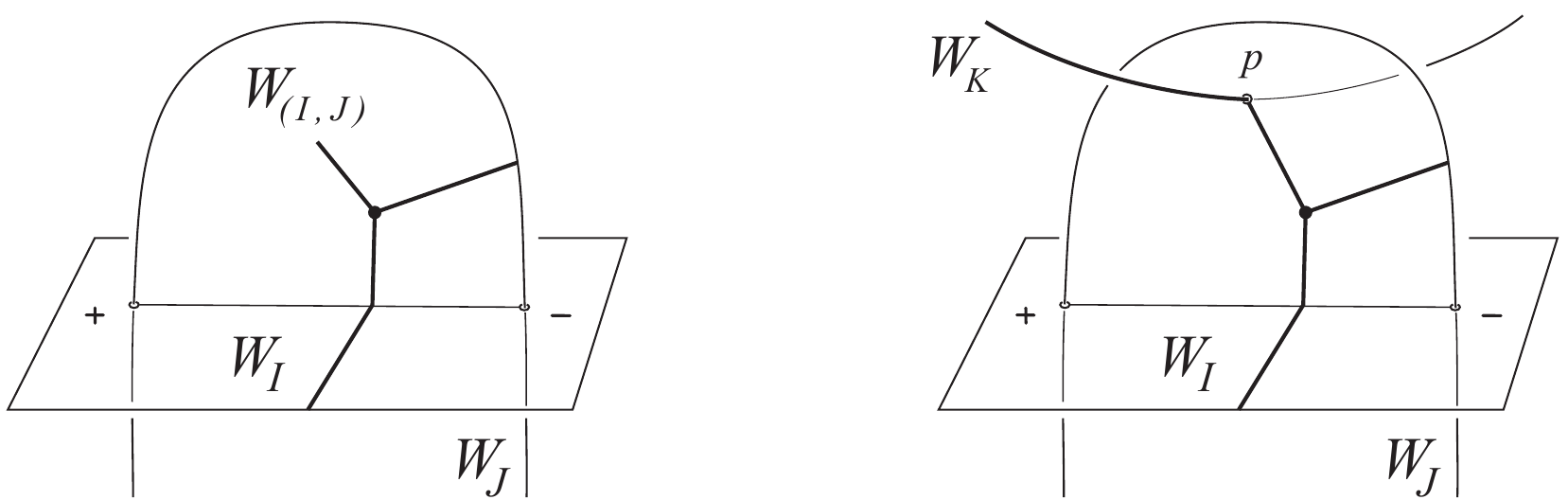}}
        \caption{On the left, (part of) the rooted tree $(I,J)$ associated to a Whitney disk $W_{(I,J)}$. On the right, (part of) the tree $t_p=\langle (I,J),K \rangle$ associated to an intersection $p\in W_{(I,J)}\cap W_K$. Note that $p$ corresponds to where the roots of $(I,J)$ and $K$ are identified to a (non-vertex) point in $\langle (I,J),K \rangle$.}
        \label{WdiskIJandIJKint-fig}
\end{figure}

\begin{defn}\label{def:int-and-Wdisk-order}
The \emph{order of a Whitney disk} $W_I$ is defined to be the order of the rooted tree $I$, and the \emph{order of a transverse intersection} $p$ is defined to be the order of the tree $t_p$.
\end{defn}

\begin{defn}\label{def:framed-tower}
A collection $\cW$ of properly immersed surfaces together with higher-order
Whitney disks is an \emph{order $n$ Whitney tower}
if $\cW$ contains no unpaired intersections of order less than $n$.  
\end{defn}
The Whitney disks in $\cW$ must have disjointly embedded boundaries, and generically immersed interiors.  All Whitney disks and order zero surfaces must also be \emph{framed}, as discussed next.

\subsection{Twisted Whitney disks and framings}\label{subsec:twisted-w-disks}
The normal disk-bundle of a Whitney disk $W$ in $M$ is isomorphic to $D^2\times D^2$,
and comes equipped with a canonical nowhere-vanishing \emph{Whitney section} over the boundary that can be described in the following way: The Whitney disk boundary circle $\partial W$ is the union of two arcs, each lying in a local sheet of the surfaces paired by $W$. The Whitney section is given by pushing $\partial W$  tangentially along one sheet, and normally off of the other sheet (while avoiding the tangential direction of $W$).
See Figure~\ref{Framing-of-Wdisk-fig}, and for more details e.g.~1.7 of \cite{Sc}.
Pulling back the orientation of $M$ with the requirement that the normal disks
have $+1$ intersection with $W$ means the Whitney section determines
a well-defined (independent of the orientation of $W$)
relative Euler number $\omega(W)\in\Z$ which represents the obstruction to extending
the Whitney section across $W$. Following traditional terminology, when $\omega(W)$ vanishes $W$ is said to be \emph{framed}. (Since $D^2\times D^2$ has a unique trivialization up to homotopy, this terminology is only mildly abusive.)
In general when $\omega(W)=k$, we say that $W$ is
$k$-\emph{twisted}, or just \emph{twisted} if the value of $\omega(W)$ is not specified.
So a $0$-twisted Whitney disk is a framed Whitney disk.

\begin{figure}[ht!]
        \centerline{\includegraphics[width=120mm]{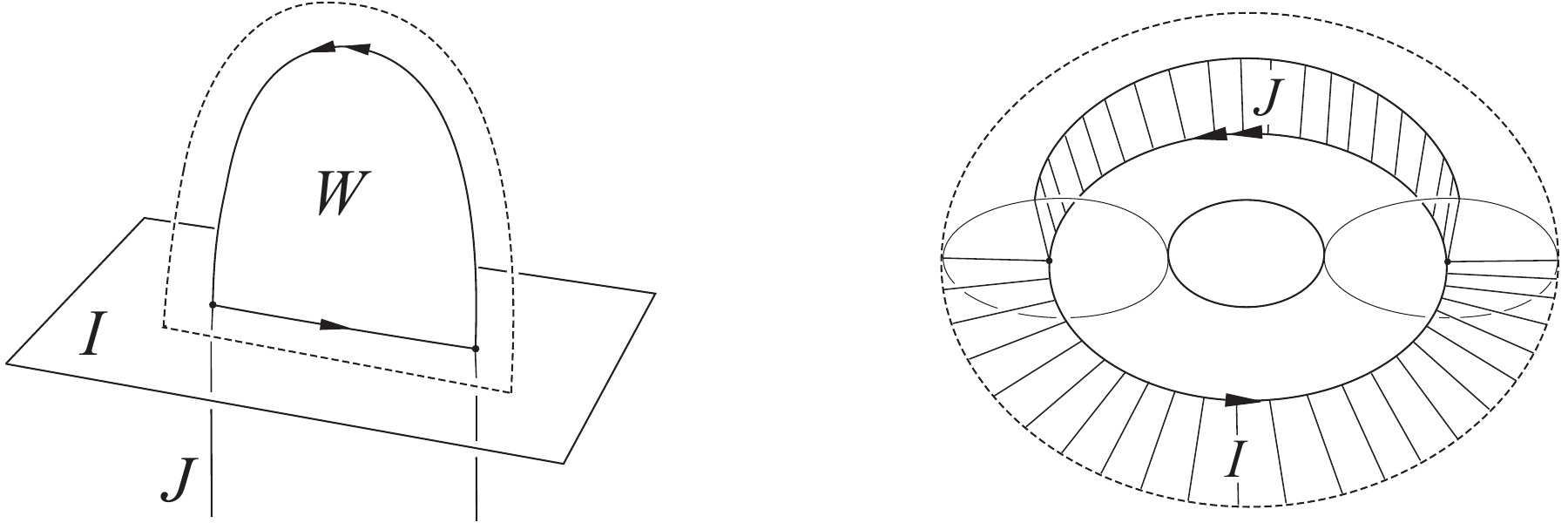}}
        \caption{In a 3-dimensional slice of $4$--space (left),
         the Whitney section over the boundary of a framed Whitney disk $W$ is
         indicated by the dotted loop, where $\partial W$ has been pushed
         tangentially along the $I$-sheet and normally off of the $J$-sheet. On the right is shown an embedding into $3$--space of the normal
         disk-bundle to $W$ over $\partial W$ (a solid torus, shown mostly transparent), 
         with the dotted loop again indicating the Whitney section, 
         and the `thatched' lines indicating parts of the surface sheets. 
         The $I$-labeled thatches indicate the tangential push of $\partial W$ along the $I$-sheet, and the
         $J$-labeled thatches indicate `one side' of the $J$-sheet along $\partial W$.}
        \label{Framing-of-Wdisk-fig}
\end{figure}

Note that for order zero surfaces a {\em framing} of $\partial A_i$ (respectively $A_i$) is by definition a trivialization of the normal bundle of the immersion. If the ambient 
$4$--manifold $M$ is oriented, this is equivalent to an orientation and a nonvanishing normal vector field on $\partial A_i$ (respectively $A_i$).
The twisting $\omega(A_i)\in\Z$ of an order zero surface is also defined when a framing of $\partial A_i$ is given, and the order zero surface $A_i$ is said to be \emph{framed} 
when $\omega(A_i)=0$.


\subsection{Twisted Whitney towers}\label{subsec:intro-twisted-w-towers}
In the definition of an order $n$ Whitney tower given just above (following \cite{CST,S1,S2,ST2})
all Whitney disks and order zero surfaces are required to be framed. It turns out that the natural generalization to twisted Whitney towers involves allowing twisted Whitney disks only in at least ``half the order'' as follows:

\begin{defn}[\cite{WTCCL}]\label{def:twisted-W-towers}
A \emph{twisted Whitney tower of order $0$} is a Whitney tower of order $0$ without any framing requirement 
(a collection of properly immersed surfaces
in a $4$--manifold).

For $k>0$, a \emph{twisted Whitney tower of order $2k-1$} is just a (framed) Whitney
tower of order $2k-1$ as in Definition~\ref{def:framed-tower} above.

For $k>0$, a \emph{twisted Whitney tower of order $2k$} is a Whitney
tower having all intersections of order less than $2k$ paired by
Whitney disks, with all Whitney disks of order less than $k$ required to be framed, but Whitney disks of order at least $k$ allowed to be twisted.
\end{defn}

\begin{rem}\label{rem:framed-is-twisted}
Note that, for any $n$, an order $n$ (framed) Whitney tower
is also an order $n$ twisted Whitney tower. We may
sometimes refer to a Whitney tower as a \emph{framed} Whitney tower to emphasize
the distinction, and will always use the adjective ``twisted'' in the setting of
Definition~\ref{def:twisted-W-towers}.
\end{rem}

\begin{rem}\label{rem:motivate-twisted-towers}
The convention of allowing only order $\geq k$ twisted Whitney disks in order $2k$ twisted Whitney towers will be explained in
Section~\ref{sec:Milnor-thm-proof} where it will be seen that twisted Whitney disks contribute to the link longitudes just as described by the 
definition of the $\eta$-map on $\iinfty$-trees.

In any event, an order $2k$ twisted Whitney tower can always be modified
so that all its Whitney disks of order $>k$ are framed, so the twisted Whitney disks of order equal to $k$ are the ones relevant to the obstruction theory \cite[Sec.4.1]{WTCCL}.  
\end{rem}


 \subsection{Whitney tower orientations}\label{subsec:w-tower-orientations}

Orientations on order zero surfaces in a Whitney tower $\cW$ are fixed, and required to induce the orientations
on their boundaries.
After choosing and fixing orientations on all the Whitney disks in
$\cW$, the associated trees 
are embedded in $\cW$ so that the vertex orientations are induced from
the Whitney disk orientations, with the descending edges of each
trivalent vertex enclosing the \emph{negative intersection point} of the corresponding Whitney disk, as in Figure~\ref{WdiskIJandIJKint-fig}.
(In fact, if a tree $t$ has more than one trivalent vertex corresponding to the same Whitney disk, then
$t$ will only be immersed in $\cW$, but this immersion can be taken to be a local embedding around each trivalent vertex of $t$
as in Figure~\ref{WdiskIJandIJKint-fig}.)

This ``negative corner'' convention, which differs from the
positive corner convention in the earlier papers \cite{CST,ST2} but agrees with all more recent papers on Whitney towers, will turn out to be
compatible with commutator conventions for use in Section~\ref{sec:Milnor-thm-proof}.

With these conventions, different choices of orientations on Whitney disks in $\cW$ correspond to anti-symmetry relations 
(as explained in Section~3.4 of \cite{ST2}).

\subsection{Links bounding (twisted) Whitney towers}\label{subsec:links-bounding-W-towers}
Throughout this paper the statement that a link $L\subset S^3$ bounds an order $n$ (twisted) Whitney tower $\cW\subset B^4$
means that the components of $L$ bound properly immersed disks which are the order $0$ surfaces of $\cW$ as in Definition~\ref{def:framed-tower}
(Definition~\ref{def:twisted-W-towers}), with all conventions as described above.


%


\subsection{Intersection invariants for twisted Whitney towers}\label{subsec:twisted-w-tower-int-invariants}

The intersection invariants for 
twisted Whitney towers take values in the groups $\cT^\iinfty_n$ defined in the introduction (Definition~\ref{def:T-infty}).

Recall from 
Definition~\ref{def:twisted-W-towers} (and Remark~\ref{rem:motivate-twisted-towers}) that twisted Whitney disks
only occur in even order twisted Whitney towers, and only those of half-order are
relevant to the obstruction theory. 
\begin{defn}[\cite{WTCCL}]\label{def:tau-infty}
The \emph{order $n$ intersection intersection invariant}
$\tau_{n}^{\iinfty}(\cW)$ of an order
$n$ twisted Whitney tower $\cW$ is defined to be
$$
\tau_{n}^{\iinfty}(\cW):=\sum \epsilon_p\cdot t_p + \sum \omega(W_J)\cdot J^\iinfty\in\cT^{\iinfty}_{n}
$$
where the first sum is over all order $n$ intersections $p$ and the second sum is over all order $n/2$
Whitney disks $W_J$ with twisting $\omega(W_J)\in\Z$. For $n=0$, recall from \ref{subsec:order-zero-w-towers-and-ints} above our notational convention that $W_j$ denotes $A_j$; in this case $\omega(A_j)\in\Z$ is the relative Euler number of the normal bundle of $A_j$ with respect to the given framing of $\partial A_j$ as in \ref{subsec:twisted-w-disks}.

\end{defn}

By splitting the twisted Whitney disks, as explained in Section~\ref{subsec:split-w-towers} below, 
for $n>0$ we may actually assume that all non-zero $\omega(W_J)\in\{\pm 1\}$, just like the signs $\epsilon_p$.

The vanishing of $\tau_{n}^{\iinfty}$ is sufficient for the existence of a 
twisted Whitney tower of order $(n+1)$: 
\begin{thm}[\cite{WTCCL}]\label{thm:twisted-order-raising-on-A}
If a collection $A$ of properly immersed surfaces in a simply connected $4$--manifold supports an order $n$ twisted Whitney tower $\cW$ with $\tau_n^\iinfty(\cW)=0\in\cT^\iinfty_n$, then $A$ is regularly homotopic (rel $\partial$) to 
$A'$ which supports an order $n+1$ twisted Whitney tower.
\end{thm}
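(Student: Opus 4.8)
The plan is to follow the scheme of the framed order-raising theorem (\cite[Thm.2]{ST2}), adapted to accommodate twisted Whitney disks: reduce the statement to the \emph{geometric realizability} of the defining relations of $\cT^\iinfty_n$ together with a geometric cancellation argument. First I would \emph{split} $\cW$, using the procedure of Section~\ref{subsec:split-w-towers}, so that every Whitney disk carries at most one interior intersection point, every order $n/2$ twisted Whitney disk has twisting $\pm1$ and no interior intersections, and the trees carried by $\cW$ are disjointly embedded. Splitting is a regular homotopy of $A$ rel $\partial$ which preserves $\tau^\iinfty_n$, and for a split tower $\tau^\iinfty_n(\cW)$ is \emph{literally} the signed formal sum $\sum_p\e_p\,t_p+\sum_J\omega(W_J)\,J^\iinfty$ over the unpaired order $n$ intersections and the order $n/2$ twisted Whitney disks.

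The heart of the argument is to show that each relation defining $\cT^\iinfty_n$ --- AS, IHX, symmetry, twisted IHX, interior-twist, boundary-twist --- is geometrically realizable: given a split order $n$ twisted Whitney tower and an instance of the relation, there is a regular homotopy of $A$ rel $\partial$ to a split order $n$ twisted Whitney tower whose signed tree sum differs from the original by that relation, modulo intersections of order $\geq n+1$ (which are allowed in an order $n$ tower). The AS and symmetry relations come from re-choosing Whitney-disk orientations (Section~\ref{subsec:w-tower-orientations}); the IHX relation is realized by the controlled Jacobi maneuver of \cite{CST,S2}, performed in a small neighborhood of the relevant Whitney disks so that no intersections of order $\leq n$ are created or destroyed beyond those intended; the interior-twist relation $2\cdot J^\iinfty=\langle J,J\rangle$ is realized by a cusp homotopy, since a local $\pm1$ self-intersection of a Whitney disk changes its twisting by $\mp2$; the twisted IHX relation is realized by performing a Whitney move in the presence of a twisted Whitney disk, as in \cite{WTCCL}; and the boundary-twist relation by a boundary-twist move, which trades a $\pm1$-twisting of an order $n/2$ Whitney disk for an order $n$ intersection point carrying the appropriate tree. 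In each case one checks, using the ``half the order'' convention of Definition~\ref{def:twisted-W-towers}, that all affected intersections and twistings have order exactly $n$, so the tower stays order $n$.

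Since $\tau^\iinfty_n(\cW)=0$ in $\cT^\iinfty_n$, the signed tree sum of $\cW$ is a finite combination of relators. Using finger moves and cusp homotopies to introduce, in geometrically trivial cancelling pairs, whatever trees are needed to exhibit those relators (this does not change $\tau^\iinfty_n$ and keeps the tower order $n$), and then applying the geometric realizations above, I would modify $\cW$ by a regular homotopy of $A$ rel $\partial$ to a split order $n$ twisted Whitney tower $\cW''$ whose unpaired top-order data is partitioned into ``trivial'' configurations: cancelling pairs $\{p,p'\}$ of order $n$ intersection points with $\e_p=-\e_{p'}$ and $t_p=t_{p'}$, and pairs of a $+1$- and a $-1$-twisted order $n/2$ Whitney disk on the same $J$ (with any surviving $\pm2$-twisted disk first converted by an interior-twist move into a self-intersection that then also pairs up). A $\{+1,-1\}$-twisted pair is eliminated by a twisting-cancellation (a finger move making the two $W_J$'s pair a common cancelling pair of lower-order intersections, followed by tubing them into a single framed Whitney disk). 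A cancelling intersection pair $\{p,p'\}$ has a Whitney circle which is null-homotopic in the simply connected ambient manifold, hence bounds a framed Whitney disk $W$; general position, finger moves, and tubing into the order zero surfaces make the resulting Whitney disks disjointly embedded, framed, and such that all of their interior intersections have order $\geq n+1$. Adjoining them to $\cW''$ pairs all order $n$ intersections, so the final $A'$ supports an order $n+1$ twisted Whitney tower.

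I expect the main obstacle to be the geometric realization of the IHX and twisted IHX relations with enough control that the tower stays order $n$ throughout and the signed tree sum changes by \emph{exactly} the chosen relator, together with keeping the framing and twisting bookkeeping consistent as boundary-twist and interior-twist moves convert twisted Whitney disks into intersection points and back. The simple-connectivity hypothesis is used only in the last step, to guarantee that the Whitney circles of cancelling pairs bound.
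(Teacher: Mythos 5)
Your proposal correctly reconstructs the proof of Theorem~2.10 in \cite{WTCCL}, which the present paper cites without reproving: split the tower, geometrically realize each defining relation of $\cT^\iinfty_n$ so that the intersection forest becomes a literal sum of cancelling pairs (modulo order $\geq n+1$ terms), then kill those pairs by producing Whitney disks via simple connectivity. One bookkeeping slip worth flagging: the boundary-twist relations occur only in odd orders $n=2k-1$, where there is no ``order-$n/2$ Whitney disk'' to twist; the move actually trades an order-$n$ intersection of shape $\langle(i,J),J\rangle$ for a $\pm 1$-twist on the order-$k$ Whitney disk $W_{(i,J)}$, and this is permitted precisely because the target is an order-$(n+1)=2k$ \emph{twisted} tower, not the odd-order tower you started with.
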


\subsection{Split twisted Whitney towers}\label{subsec:split-w-towers}
A twisted Whitney tower is \emph{split} if each Whitney disk is embedded, and the set of singularities in the interior
of any framed Whitney disk consists of either a single transverse intersection point, or a single boundary arc of a higher-order Whitney disk, or is empty; and if each non-trivially twisted
Whitney disk has no singularities in its interior, and has twisting equal to $\pm 1$.
This can always be arranged by performing (twisted) finger moves along Whitney disks guided by arcs
connecting the Whitney disk boundary arcs (see Section~2.5 of \cite{WTCCL}). 

Splitting simplifies the combinatorics of Whitney tower constructions and will be assumed, often without mention, in subsequent sections. Splitting an order $n$ (twisted) Whitney tower $\cW$ does not change $\tau_n^\iinfty(\cW)\in\cT^\iinfty_n$ (Lemma~2.12 of \cite{WTCCL}).

\subsection{Intersection forests for split twisted Whitney towers}\label{subsec:t(cW)}\label{subsec:int-forest}
Recall from \cite[Def.2.11]{WTCCL} that the disjoint union of signed 
trees and $\iinfty$-trees associated to the unpaired intersections and
$\pm 1$-twisted Whitney disks in a split twisted Whitney tower $\cW$ is denoted by $t(\cW)$, and called the \emph{intersection forest} of $\cW$. Here each tree $t_p$ associated to
an unpaired intersection $p$ is equipped with the sign of $p$, and
each $\iinfty$-tree $J^\iinfty$ associated to a clean $\pm 1$-twisted Whitney disk is given the corresponding 
sign $\pm 1$.

In any split $\cW$, the intersection forest can be thought of as an embedding of the disjoint union of trees $t(\cW)$ into $\cW$ which embodies both the geometric and algebraic data associated to $\cW$: If we think of the trees as subsets of $\cW$, then all singularities of $\cW$ are contained in a neighborhood of $t(\cW)$; and if we think of the trees as generators, then $t(\cW)$ is an ``abelian word'' representing
$\tau^\iinfty_n(\cW)$. 
(In any $\cW$ of order $n$, it is always possible to eliminate all intersections of order strictly greater than $n$, for instance by performing finger moves (``pushing down'') to create algebraically canceling pairs of order $n$ intersections, see discussion in Section~4 of \cite{WTCCL}). 

\begin{rem}
In the older papers \cite{CST,S1,ST2} we referred to $t(\cW)$ as the ``geometric intersection tree'' (and to the group element
$\tau_n(\cW)$ as the order $n$ intersection ``tree'', rather than ``invariant''), but the term ``forest'' better describes
the disjoint union of (signed) trees $t(\cW)$.
\end{rem}

\section{Twisted Whitney towers and gropes}\label{section:twisted-towers-and-gropes}
For use in subsequent sections, this section recalls the correspondence
between (split) Whitney towers and (dyadic) capped gropes \cite{CST,S1} in the $4$--ball, and extends this
relationship to the twisted setting. 
The main goal is to describe how this correspondence
preserves the associated disjoint unions of signed trees (intersection forests). In particular, Lemma~\ref{lem:twisted-tower-to-grope} below will be used 
in Section~\ref{sec:Milnor-thm-proof} to exhibit the relationship between twisted Whitney towers and Milnor invariants in the proof of Theorem~\ref{thm:Milnor invariant}.
A detailed understanding of the material in this section relies heavily on having digested (the proof of) Theorem~5 in \cite{S1}. 
An illustration of the tree-preserving Whitney tower-grope correspondence can be seen in  
Figure~\ref{bing-hopf-example-tower-and-grope-with-tree} below.

\subsection{Dyadic gropes and their associated trees}\label{subsec:gropes}
This subsection reviews and fixes some basic grope terminology.
It will suffice to work with \emph{dyadic} gropes, i.e.~gropes whose higher stages are all genus one; these correspond to split Whitney towers 
(Section~\ref{subsec:split-w-towers}),
and gropes in $4$--manifolds can always be modified to be dyadic by Krushkal's ``grope splitting'' operation \cite{Kr}.

A {\em dyadic grope} $G$ is a $2$-complex constructed by the following method:
\begin{enumerate}
\item Start with a compact orientable connected surface of any positive
genus, called the {\em bottom stage} of $G$, and choose a symplectic
basis of circles on this bottom stage surface.
\item Attach punctured tori to any number of the basis circles and
choose hyperbolic pairs of circles on each attached torus.
\item Iterate the second step a finite number of times, i.e.~attach punctured tori
to any number of previously chosen basis circles that don't already have a torus attached to them, and choose 
hyperbolic circle-pairs for these new tori.
\end{enumerate}
The attached tori are the {\em higher stages} of $G$, and at each iteration
in the construction tori can be attached to circles in any stage.
The basis circles in all stages of $G$ that do not have a torus
attached to them are called the {\em tips} of $G$.

Our requirement that the bottom stage of $G$ has positive genus serves only to simplify
notation and terminology, as the genus zero case will not be needed in our constructions. 

Attaching
$2$--disks along all the tips of $G$ yields a {\em capped} (dyadic) grope, denoted $G^c$,
and the uncapped grope $G$ is called the \emph{body} of $G^c$.

Cutting the bottom stage of $G$ into genus one pieces decomposes $G$ (and $G^c$) into \emph{branches},
and our notion of dyadic grope (following \cite{CST,S1}) is more precisely called
a ``grope with dyadic branches'' in \cite{Kr}.
 
With an eye towards defining intersection forests for capped gropes in $B^4$, we start by 
associating to an abstract capped grope $G^c$
the following disjoint union $t(G^c)$ of unlabeled and unoriented unitrivalent trees:
Assume first that the bottom stage of $G^c$ is a genus one surface
with boundary.  Then define $t(G^c)$ to be the unitrivalent
tree which is dual to the $2$--complex $G^c$. Specifically, the tree $t(G^c)$
can be embedded in $G^c$ in the following way. Choose a 
vertex in the interior of each surface stage and each cap of $G^c$. Then  
each edge of $t(G^c)$ is a sheet-changing path between vertices in
adjacent stages or caps (here ``adjacent'' means ``intersecting in
a circle'').
One univalent vertex of $t(G^c)$ sits in the
bottom stage of $G^c$, each of the other univalent vertices is a
point in the interior of a cap of $G^c$, and each higher stage of
$G^c$ contains a single trivalent vertex of $t(G^c)$ 
(see e.g.~the right-hand side of Figure~\ref{bing-hopf-example-tower-and-grope-with-tree} below.)

In the case where the bottom stage of $G^c$ has genus $>1$, then
$t(G^c)$ is defined by cutting the bottom stage into genus one
pieces and taking the disjoint union of the unitrivalent trees just described.
Thus, each branch of $G^c$ contains a single tree in $t(G^c)$.

Note that each tree in $t(G^c)$ has exactly one univalent vertex which sits in the bottom
stage of $G^c$; these vertices can naturally be considered as roots, and it is customary
to associate rooted trees to gropes. Here we prefer to ignore this extra information, since we will be identifying $t(G^c)$ with the unrooted trees associated to Whitney towers.

The {\em class} of a capped grope $G^c$ is one more than the minimum of the orders
of the trees in $t(G^c)$. The body $G$ of $G^c$ inherits the same union of trees,
$t(G):=t(G^c)$, and the same notion of class. 

\textbf{Convention:} \emph{For the rest of this paper gropes may be assumed to be dyadic, even if not explicitly stated.}

\subsection{Intersection forests for capped gropes bounding links}\label{subsec:gropes-bounding-links}
The \emph{boundary} $\partial G$ of a grope $G$ is the boundary of its bottom stage.
An embedding $(G,\partial G)\hookrightarrow (B^4,S^3)$ is \emph{framed}
if a disjoint parallel push-off of the bottom stage of $G$ induces a given framing of $\partial G\subset S^3$ and extends to
a disjoint parallel push-off of $G$ in $B^4$. 

\begin{defn}\label{def:link-bounds-grope}
For a framed link $L\subset S^3$, the statement ``$L$ bounds a capped grope
$G^c$ in $B^4$'' means that the link components $L_i$ bound disjointly embedded framed gropes
$G_i\subset B^4$, such that the tips of the $G_i$ bound framed caps whose interiors are disjointly
embedded, with each cap having a single transverse interior intersection with the bottom stage 
of some $G_j$. Here a cap is \emph{framed} if the parallel push-off of its boundary in the grope extends to
a disjoint parallel copy of the entire cap. The union of the gropes is denoted $G:=\cup_i G_i$, and $G^c:=\cup_i G_i^c$ is the union of $G$ together with all the caps. 
\end{defn}

All previous grope notions carry over to this setting, even though the bottom stage of $G$
is not connected; e.g.~we refer to the grope $G$ as the body of the capped grope $G^c$.
In particular, the disjoint union of trees $t(G^c):=\amalg_i\, t(G^c_i)$ can now be considered as a subset of $B^4$.
This provides labels from $\{1,2,\ldots,m\}$ for all univalent vertices: The bottom-stage univalent vertex of each tree in 
$t(G^c_i)$ inherits the label $i$; and if a cap intersects the bottom stage of $G_j$, then the vertex
corresponding to that cap inherits the label $j$, as shown in 
the right-hand side of Figure~\ref{bing-hopf-example-tower-and-grope-with-tree}. 
Orientations on all stages of $G$ induce
orientations of the trivalent vertices in $t(G^c)$, and orientations on all caps determine signs
for each cap-bottom stage intersection. To each tree in $t(G^c)$ is associated a sign $\pm$ which is the product of the signs of its caps. We assume the convention that the orientations of the bottom stages of $G$ correspond to the link orientation.
Thus, when $G^c$ is oriented, meaning that all stages and caps are oriented, $t(G^c)$ is a disjoint union of signed oriented labeled trees
which we call the \emph{intersection forest} of $G^c$, in line with the terminology for Whitney towers.

\subsection{Intersection forests for twisted capped gropes bounding links}\label{subsec:twisted-gropes-bounding-links}
A \emph{twisted capped grope} $G^c$ in $B^4$ is the same as a capped grope as in Definition~\ref{def:link-bounds-grope} just above, 
except that at most one cap in each branch of $G^c$ is allowed to be arbitrarily \emph{twisted} 
as long as its interior is embedded and disjoint from all other caps and stages of $G^c$.
Here a cap $c$ is $k$-twisted, for $k\in\Z$, if the parallel push-off of its boundary in the grope 
determines a section of the normal bundle of $c\subset B^4$ with relative Euler number $k$.
(So a $0$-twisted cap is framed.)

\begin{defn}\label{def:link-bounds-twisted-grope-and-forest}
A link $L\subset S^3$ \emph{bounds a twisted capped grope}
if the link components $L_i$ bound disjointly embedded framed gropes
$G_i\subset B^4$ which extend to a twisted capped grope $G^c=\cup_i G_i^c\subset B^4$.

The \emph{intersection forest $t(G^c)$ of a twisted capped grope bounding a link} is defined as
the extension of
the framed definition
which labels each univalent vertex that corresponds to a non-trivially $k$-twisted cap with the twist symbol $\iinfty$,
and takes the twisting $k$ as a coefficient. 
\end{defn}

Recall from~\ref{subsec:gropes} above that for a capped grope $G^c$, 
if $n$ is the minimum of the orders of the trees in
$t(G^c)$, then the class of $G^c$ is $n+1$.

Motivated by the correspondence with twisted Whitney towers described below,
we define the \emph{class of a twisted capped grope} $G^c$ to be $n+1$
if $n$ is the minimum of the orders of the non-$\iinfty$ trees in
$t(G^c)$, and if the $\iinfty$-trees in $t(G^c)$ are of order at least $n/2$.

\subsection{From twisted Whitney towers to twisted capped
gropes}\label{subsec:w-tower-to-grope} 
\begin{figure}
\centerline{\includegraphics[scale=.55]{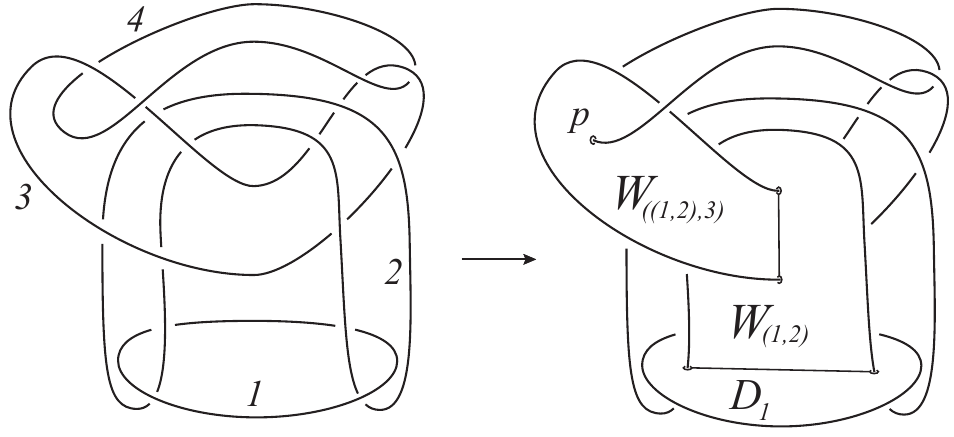}}
         \caption{Moving into $B^4$ from left to right, a Bing-doubled Hopf link $L\subset S^3$ bounds an order $2$ Whitney tower $\cW$:
         The order $0$ disk $D_1$ consists of a collar on $L_1$ together with the indicated embedded disk on the right. The other three order $0$ disks $D_2$, $D_3$ and $D_4$
         consist of collars on the other link components which extend further into $B^4$ and are capped off by disjointly embedded disks.
         The Whitney disk $W_{(1,2)}$ pairs $D_1\cap D_2$, and $W_{((1,2),3)}$
         pairs $W_{(1,2)}\cap D_3$, with $p=W_{((1,2),3)}\cap D_4$
         the only unpaired intersection point in $\cW$.}
         \label{bing-hopf-example-with-tower}
\end{figure}

\begin{figure}
\centerline{\includegraphics[scale=.65]{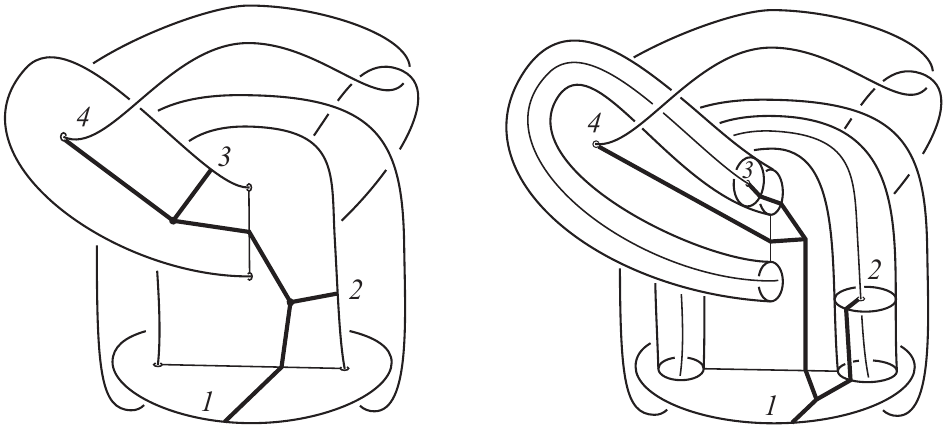}}
         \caption{Both sides of this figure correspond to the slice of $B^4$ shown in the right-hand side of Figure~\ref{bing-hopf-example-with-tower}.
         The tree $t_p=\langle ((1,2),3),4 \rangle$ is shown on the left as a subset of the order $2$ Whitney tower $\cW$. 
         Replacing this left picture by the picture on the right illustrates the tree-preserving construction
         of a class $3$ capped grope $G^c$ bounded by $L$. In this case, the component $G^c_1$ bounded by $L_1$ is the class $3$ capped grope
         shown (partly translucent) on the right (together with a collar on $L_1$) which is gotten by surgering $D_1$ and $W_{(1,2)}$. 
         The three other components of $G$ are just the disks $D_2$, $D_3$ and $D_4$ of $\cW$, each of which has a single intersection with a cap of $G^c_1$.}
         \label{bing-hopf-example-tower-and-grope-with-tree}
\end{figure}

In \cite{S1} a
``tree-preserving'' procedure for converting an order $n$ (framed) Whitney
tower $\cW$ into a class $n+1$ capped grope (and vice versa) is
described in detail. This construction will be extended to the twisted setting in Lemma~\ref{lem:twisted-tower-to-grope} just below, which will be used in the proof of Theorem~\ref{thm:Milnor invariant} given in the next section. The rough idea is that the ``subtower'' of Whitney disks containing
a tree in a split Whitney tower can be surgered to a dyadic branch of a capped grope containing the same tree, with the capped grope orientation inherited from that of the Whitney tower.
Orientation and sign conventions will be presented during the course of the proof.

\begin{lem}\label{lem:twisted-tower-to-grope} 
If $L$ bounds an order $n$ split twisted Whitney tower $\cW$, then 
$L$ bounds a dyadic class $n+1$ twisted capped grope $G^c$ such that:
\begin{enumerate}
\item $t(\cW)$ is isomorphic to $t(G^c)$.
\item Each framed cap of $G^c$ has intersection $+1$ with a bottom
  stage of $G$, except that one framed cap in each dyadic branch of $G^c$ with
  signed tree $\epsilon_p\cdot t_p$ has intersection $\epsilon_p$
  with a bottom stage.
  \item Each $\epsilon$-twisted cap of $G^c$ contains the corresponding $\iinfty$-labeled vertex of its $\iinfty$-tree in $t(G^c)$. 
\end{enumerate}
\end{lem}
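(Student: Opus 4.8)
The plan is to adapt the tree-preserving surgery procedure of \cite[Thm.~5]{S1} from the framed to the twisted setting, branch by branch. First I would recall the key mechanism of that construction: given a split (framed) Whitney tower $\cW$, each ``subtower'' consisting of a Whitney disk $W_{(I,J)}$ together with the lower-order Whitney disks and order zero surfaces carrying the descending edges of its tree can be surgered, one stage at a time from the top down, converting pairs of surface sheets paired by a Whitney disk into a punctured torus whose hyperbolic pair of circles bound the surgered copies of the old sheets; the Whitney disk itself becomes (the neighborhood of) a cap. Because this surgery is local around the embedded tree $t(\cW)\hookrightarrow\cW$ and $\cW$ is split, the branches do not interact, so it suffices to treat a single branch with tree $\epsilon_p\cdot t_p$ (or a single $\iinfty$-tree $\epsilon\cdot J^\iinfty$). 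The output is a dyadic capped grope whose dual tree is, by construction, exactly $t_p$, giving item (1), and the class count $n+1$ follows since the minimum order of a non-$\iinfty$ tree is $n$ and, by Definition~\ref{def:twisted-W-towers}, any $\iinfty$-tree has order $\geq n/2$, matching the definition of class for twisted capped gropes in~\ref{subsec:twisted-gropes-bounding-links}.

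Next I would track the orientations and signs, which is what items (2) and (3) are really about. The surgery turns the unique transverse intersection point in a framed Whitney disk $W_K$ (there is exactly one, since $\cW$ is split) into the single cap--bottom-stage intersection of the corresponding cap; with the ``negative corner'' convention of~\ref{subsec:w-tower-orientations} and the orientation conventions of~\ref{subsec:w-tower-orientations}, all such caps can be taken to meet a bottom stage with sign $+1$, \emph{except} that the innermost cap in the branch — the one that absorbs the actual unpaired intersection point $p$ of order $n$ — inherits the sign $\epsilon_p$ of $p$. (Equivalently: the product of cap signs in a branch must equal $\epsilon_p$ so that $t(G^c)$ carries the right sign, and one is free to push all the sign onto a single chosen cap, since changing the orientation of an intermediate stage flips the signs of two caps simultaneously and changes the tree by an AS relation that we are allowed to absorb.) This gives item (2). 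For item (3): a twisted Whitney disk $W_J$ of order $n/2$ with $\omega(W_J)=\epsilon\in\{\pm1\}$ has, after splitting, no interior singularities; the surgery converts $W_J$ into an $\epsilon$-twisted cap — the relative Euler number is preserved because the push-off framings correspond under the surgery — and the tip it caps off becomes the $\iinfty$-labeled univalent vertex of the $\iinfty$-tree $J^\iinfty$ sitting in that branch, which is exactly item (3).

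The two routine verifications I would then dispatch are: that the gropes $G_i$ are \emph{framed} in the sense of~\ref{subsec:gropes-bounding-links} (this follows because the order zero surfaces $D_i$ of $\cW$ are framed by hypothesis and surgery along framed Whitney disks preserves the ambient framing data — essentially the content of \cite{S1} in the framed case, unchanged here since twisting occurs only on caps), and that the resulting grope is dyadic, which is immediate because $\cW$ is split so every higher stage produced has genus one. The dyadic branch decomposition of $G^c$ corresponds exactly to the disjoint trees of $t(\cW)$, so $t(\cW)\cong t(G^c)$ as signed oriented labeled (and $\iinfty$-labeled) forests.

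**Expected main obstacle.** The genuinely new work beyond \cite{S1} is bookkeeping the twisting through surgery: making precise that an $\epsilon$-twisted Whitney disk surgers to an $\epsilon$-twisted cap with the push-off of the cap boundary inducing the right section of the normal bundle, and that the convention in Definition~\ref{def:twisted-W-towers} allowing twisted disks only in order $\geq n/2$ is exactly what is needed to land in the class $n+1$ notion of~\ref{subsec:twisted-gropes-bounding-links} and to ensure at most one twisted cap per branch. I expect this to be mostly a careful normal-bundle computation once the framed case is granted, but it is the step where an error would most easily creep in.
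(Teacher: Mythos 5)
Your overall plan is the same as the paper's: cite the unoriented framed case of \cite[Thm.5]{S1}, then sketch the new bookkeeping needed for orientations, signs, and twisted caps, and check that the twisting of a clean $\pm 1$-twisted Whitney disk passes to a $\pm 1$-twisted cap under surgery. Your class count and your handling of item (iii) are essentially correct.

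However, there is a concrete gap in how you describe the surgery procedure. You present it as simply tubing along a boundary arc of each Whitney disk (so the Whitney disk becomes a cap and an adjacent sheet gains genus), with the branches not interacting. That alone does \emph{not} produce a class-$(n{+}1)$ grope when the tree $t_p$ is not \emph{simple}, i.e.\ right- or left-normed. When a cap created by surgery meets a positive-order Whitney disk $W_K$ in its interior (rather than containing a boundary arc of a higher-order Whitney disk, or meeting a bottom stage), one must first perform a $W_K$-guided Whitney move to replace that interior intersection by a Whitney-disk boundary arc before the surgery step can continue; without this, the class of the resulting grope is too low. The paper explicitly includes this step, and the substantive new verification for the oriented setting is that these Whitney moves preserve the signed, oriented tree structure (Figure~\ref{fig:W-MOVE-trees}). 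Your sketch omits this entirely, so as written it only handles simple trees, and it also mis-attributes the source of the sign $\epsilon_p$: the unpaired point $p$ is moved around by Whitney moves before it ever becomes a cap-to-bottom-stage intersection. Relatedly, your ``absorb the AS relation'' reasoning for item (ii) is not available: $t(\cW)\cong t(G^c)$ is asserted as an isomorphism of signed oriented labeled forests, not an identity in the quotient $\cT^\iinfty_n$, so one cannot mod out by AS. The paper instead fixes explicit orientation conventions at each surgery step (the stage $S_I$ inherits the orientation of $c_I$, the cap $c_{(I,J)}$ inherits that of $W_{(I,J)}$, and the dual cap $c_J$ is oriented so its intersection with the $J$-sheet is positive) and then checks sign preservation through the Whitney-move step.
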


\begin{proof}

\textbf{Outline:} A detailed inductive proof of the framed unoriented case is given in \cite[Thm.5]{S1}.
We will adapt the proof from \cite{S1} to the current twisted setting, sketching
the construction while introducing orientation and sign conventions. 
The basic idea of the procedure is to tube (0-surger)
along one boundary-arc of each Whitney disk; but in order to
maximize the class of the resulting grope, Whitney moves may need
to be performed when trees are not \emph{simple}, meaning right-
or left-normed (see Figure~17 of Section~7 in \cite{S1}). As mentioned at the start of this section,
an appreciation of the role played by these subtleties in the current proof depends largely on
having understood the proof of Theorem~5 of \cite{S1}.

A simple example of the construction (in the framed case) is illustrated in Figures~\ref{bing-hopf-example-with-tower} and \ref{bing-hopf-example-tower-and-grope-with-tree}, 
which show how an order $2$ Whitney tower
bounded by the Bing-double of the Hopf link can be converted to a class $3$ capped grope.

For each $t_p\in t(\cW)$, the construction works upward from a
chosen Whitney disk having a boundary arc on an order zero disk $D_i$, which corresponds to
the choice of
an $i$-labeled univalent vertex of $t_p$, creating caps out of
Whitney disks, then turning these caps into surface stages whose
caps are created from higher-order Whitney disks, and so on, until the process terminates
when each framed cap has a single intersection with a bottom stage surface. The
resulting dyadic branch of $G^c$ will inherit the tree $t_p$ as a subset.
Similarly, for each $\iinfty$-tree
$\pm J^\iinfty\in t(\cW)$ the construction will yield a dyadic branch
containing $J^\iinfty$ with the $\iinfty$-vertex sitting in a
$\pm 1$-twisted cap.

\textbf{The surgery step:} Figure~\ref{surgery-stepAandB-fig} illustrates a surgery step and
the corresponding modification of the embedded tree
near a trivalent vertex corresponding to a Whitney disk $W_{(I,J)}$ in $\cW$.
The sheet $c_I$ is a (temporary) cap which has already been created, or is just an order zero disk $D_i$ with $I=i$ 
in the first step of creating a dyadic branch of $G^c$.
Any interior intersections of $W_{(I,J)}$ are not shown.
After the surgery which turns the $c_I$ into a surface stage
$S_I$, the Whitney disk $W_{(I,J)}$ minus part of a collar becomes
one cap $c_{(I,J)}$, and a normal disk to the $J$-sheet becomes a dual
cap $c_J$.  
\begin{figure}
\centerline{\includegraphics[width=120mm]{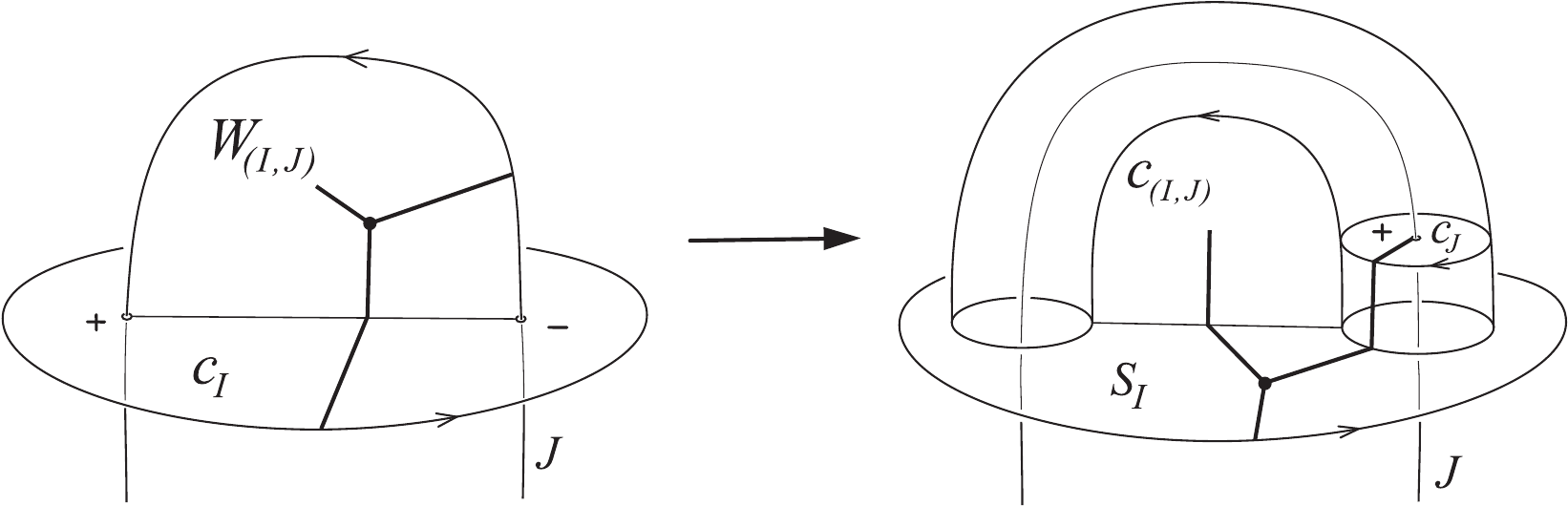}}
         \caption{A surgery step in the resolution of an order $n$ twisted Whitney tower to
         a class $n+1$ twisted capped grope. Any interior intersections in $W_{(I,J)}$ and $c_{(I,J)}$ are not shown.}
         \label{surgery-stepAandB-fig}
\end{figure}
The $S_I$ stage inherits the orientation of $c_I$, and the cap
$c_{(I,J)}$ inherits the orientation of $W_{(I,J)}$. As pictured in Figure~\ref{surgery-stepAandB-fig}, 
the effect of the surgery on the tree sends the trivalent
vertex in $W_{(I,J)}$ to the trivalent vertex in the $S_I$ sheet,
with the induced orientation. The cap $c_J$ is a parallel copy of what
used to be a neighborhood in $c_I$ around the negative
intersection point paired by $W_{(I,J)}$, but with the opposite
orientation, so that $c_J$ has a single positive intersection with
the $J$-sheet.

Here the $I$-subtree sits in the part of the grope branch which has already been constructed, 
while the $J$-subtree and any $K$-subtree corresponding to intersections with $c_{(I,J)}$ sit in
sub-towers of $\cW$ which have yet to be converted to grope stages.
The proof proceeds by considering the various cases depending on the orders of $I$, $J$, and $K$.

We call a Whitney disk or cap \emph{clean} if it is embedded and free of any interior intersections with any surface sheets.

\textbf{The surgery cases:}
If $W_{(I,J)}$ had a single interior intersection with an order zero disk $D_k$,
then so does the cap $c_{(I,J)}$; and we relabel this cap as $c_k$. If in this case
$J=j$ is also order zero, then there is no
further modification to $c_j$ and $c_k$, which remain as caps intersecting the bottom stages
of the gropes $G_j$ and $G_k$ when the construction is complete.

If $W_{(I,J)}$ was a clean $\epsilon$-twisted Whitney disk, 
then $c_{(I,J)}$ is a clean $\epsilon$-twisted cap of $G^c$ containing the $\iinfty$-label of the $\iinfty$-tree associated to the branch. In this
case there is no further modification of the cap, which will be
denoted $c^\iinfty_{(I,J)}$.

Note that surgering Whitney disks to caps preserves twistings: See
Figure~\ref{twistings-preserved-fig}.
\begin{figure}
\centerline{\includegraphics[width=110mm]{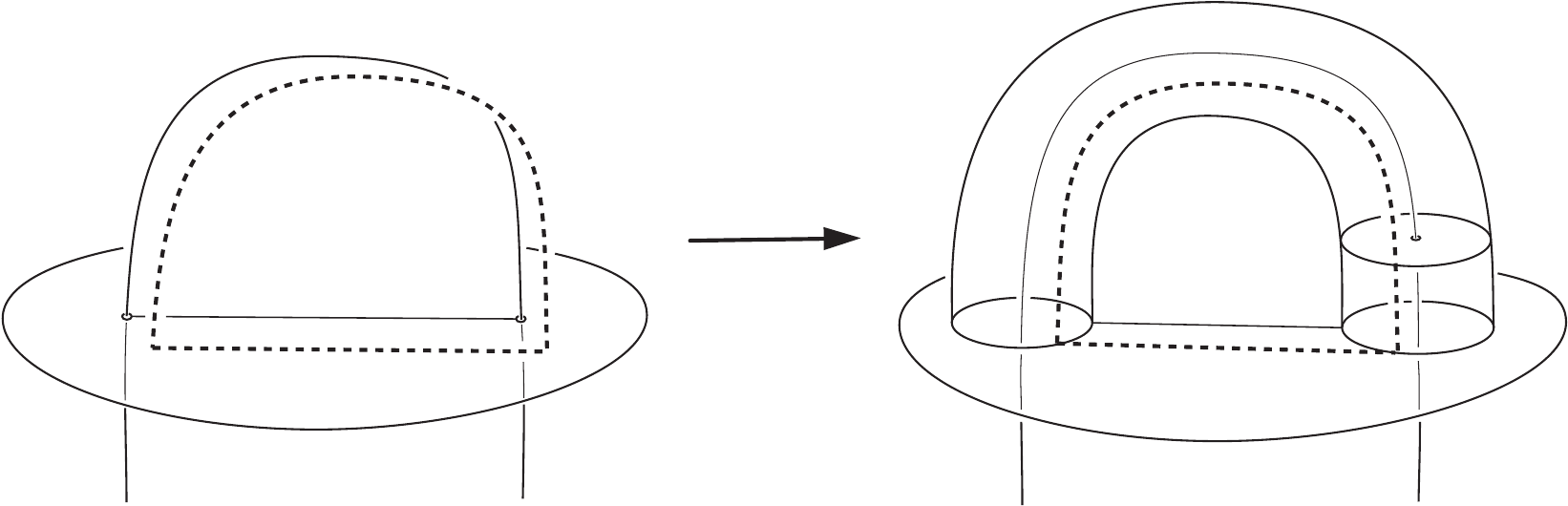}}
         \caption{The framing obstruction determined by
         the Whitney section over the boundary of a Whitney disk 
         is passed on to the framing obstruction on the cap resulting from surgery.}
         \label{twistings-preserved-fig}
\end{figure}

If $J=j$ is order zero and $W_{(I,j)}$ was a clean $\epsilon$-twisted
Whitney disk yielding $c^\iinfty_{(I,j)}$, then there is also no further
modification to the dual cap $c_j$. If $J=(J_1,J_2)$ has positive order, then 
the clean twisted cap $c^\iinfty_{(I,J)}$ remains ``as is'', but the dual cap $c_J$ is modified as described below
(in the next-to-last paragraph of the proof).

If the cap $c_{(I,J)}$ contains an arc of a Whitney disk boundary, then
the just-described surgery step for $c_I$ applies to $c_{(I,J)}$. Otherwise, the
grope construction requires a Whitney move as described next.

If the cap $c_{(I,J)}$ intersects some $W_K$ transversely in
the single point $p$, with $\sign (p)=\epsilon_p$, and
$K=(K_1,K_2)$ of positive order, then the grope construction
proceeds by doing a Whitney move guided by $W_K$ on either the $K_1$-sheet
or the $K_2$-sheet: The
effect of this $W_K$-Whitney move is to replace $p$ by a Whitney-disk boundary-arc in 
$c_{(I,J)}$ so that the surgery step can be
applied. Here $p$ could be the original unpaired intersection in
$t_p$, or an intersection created during the construction, and
Figure~\ref{fig:W-MOVE-trees} illustrates how the oriented
tree and the sign of the unpaired intersection are preserved in
the case $\epsilon_p=+1$; the case $\epsilon_p=-1$ can be checked
in the same way.
\begin{figure}
\centerline{\includegraphics[width=110mm]{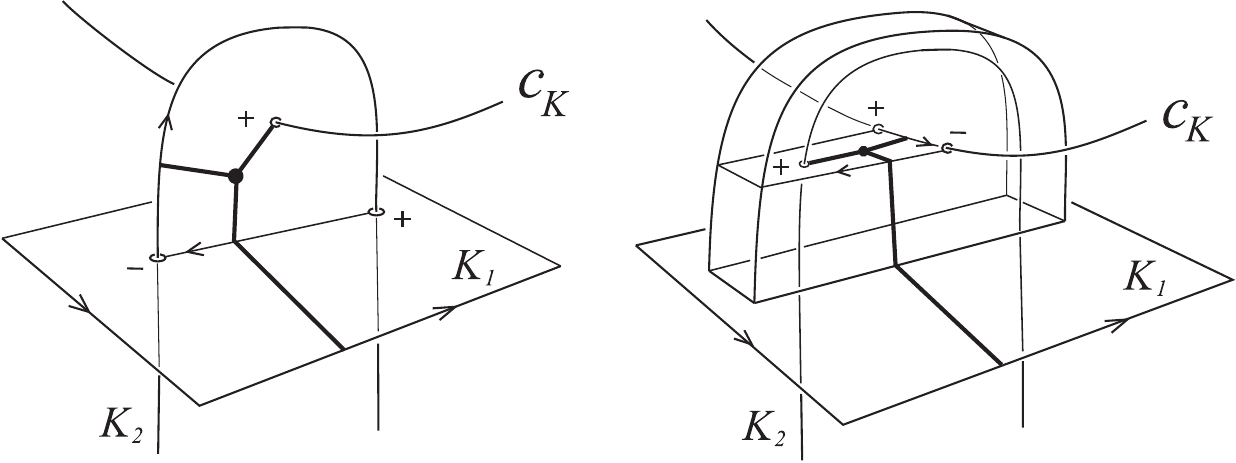}}
         \caption{A Whitney move preserves the sign and orientation at a trivalent vertex.}
         \label{fig:W-MOVE-trees}
\end{figure}

Similarly, if $J=(J_1,J_2)$ has positive order, then the grope construction proceeds by doing a
$W_J$-Whitney move to replace the positive intersection point
between $c_J$ and $W_J$ by a boundary arc of a Whitney disk, so
that the surgery step can be applied to $c_J$. That this preserves
the oriented tree and the $+1$ sign of the unpaired intersection
also follows from (a re-labeling of) Figure~\ref{fig:W-MOVE-trees}.

For each tree in $t(\cW)$ this procedure terminates when each
framed cap has a single intersection with a bottom stage, creating
a dyadic branch of the capped grope $G^c$; and applying the procedure to all trees in
$t(\cW)$ yields $G^c$, containing its intersection forest
$t(G^c)$, with all vertex orientations induced by the
orientation of $G^c$. Since conditions (ii) and (iii) of the lemma are satisfied, it follows that
$t(G^c)$ and $t(\cW)$ are isomorphic, since the coefficients of the trees are also preserved.
\end{proof}

\section{Proof of
Theorem~\ref{thm:Milnor invariant}}\label{sec:Milnor-thm-proof}
Recall the content of Theorem~\ref{thm:Milnor invariant}: For $L$ bounding an order~$n$ twisted Whitney tower $\cW$, the first non-vanishing order $n$ Milnor invariant $\mu_n(L)$ can be computed from $\cW$ as
$$
\mu_n(L)=\eta_n(\tau_n^\iinfty(\cW))
$$
where $\mu_n(L):=\sum_i X_i \otimes \mu_n^i(L) \in \sL_1 \otimes\sL_{n+1}$
collects the length $n+1$ iterated commutators determined by the link longitudes considered
as Lie brackets $\mu_n^i(L)$ in the free $\Z$-Lie algebra, and the map $\eta_n$ converts trees into rooted trees (Lie brackets) by summing over all choices of roots (the definition of $\eta_n$ is recalled below). The proof of this statement will also show that 
$\mu_k (L)$ vanishes for all $k<n$.

We first note that Theorem~\ref{thm:Milnor invariant} holds in two special cases:

\textsc{The order 0 case:}
It is easily checked from the definitions in Section~\ref{subsec:intro-Milnor-review} that for $i\neq j$ the coefficient of 
$X_i\otimes X_j$ in $\mu_0(L)$ is the linking number of $L_i$ and $L_j$, which via the well-known computation of linking numbers by counting signed intersections
between the properly immersed disks $D_i$ and $D_j$ bounded by $L_i$ and $L_j$ is also equal to the coefficient 
of $X_i\otimes X_j$ in $\eta_0(\tau_0^\iinfty(\cW))$.

Although Milnor invariants are not usually defined for knots, for framed links it is natural to consider the 
framing $f_i$ of $L_i$ as an order $0$ (length $2$) integer Milnor invariant, and the coefficient of $X_i\otimes X_i$ in $\mu_0(L)$ is exactly $f_i$ when this framing is used to determine the $i$th longitude. To see that the coefficient 
in $\eta_0(\tau_0^\iinfty(\cW))$ of
$X_i\otimes X_i$ is also equal to $f_i$, let $d_i$ denote the number of positive self
intersections of $D_i$ minus the number of negative self intersections  of $D_i$. Then
the relative Euler number of $D_i$ with respect to the framing $f_i$ on $L_i=\partial D_i$ is equal to $f_i-2d_i$
(see e.g.~Figure~19 of \cite{WTCCL} and accompanying discussion), and the terms of 
$\tau_0^\iinfty(\cW)$ which contribute via $\eta_0$ to the coefficient of $X_i\otimes X_i$
are exactly 
$
(d_i)\cdot \,i\,-\!\!\!-\!\!\!-\,i+(f_i-2d_i) \cdot\iinfty \,-\!\!\!-\!\!\!-\,i,
$
which get sent by $\eta_0$ to
$(f_i)\cdot X_i\otimes X_i$.

So we may assume in the rest of the proof  
that $\cW$ is of positive order.
Note that this means that all link component framings are zero,
since the self-intersections
of all order zero disks come in canceling pairs (paired by order $1$ Whitney disks).

\textsc{The case of slice links:} In the case that $\cW$ consists of disjointly embedded slice disks for $L$, then
Theorem~\ref{thm:Milnor invariant} is true for all $n>0$ since
$\mu_n(L)$ vanishes, and so does $\tau_n^\iinfty(\cW)$ since $\cW$ is an order $n$ twisted Whitney tower for all $n$.

So in the rest of the proof we may also assume that the intersection forest $t(\cW)$ (defined in Section~\ref{subsec:int-forest}) is non-empty.

\subsection{Outline of the proof}\label{subsec:Milnor-thm-proof-outline}
To prove Theorem~\ref{thm:Milnor invariant} we will first convert the order $n$
twisted Whitney tower $\cW$ bounded by $L$ to an  order $n+1$ twisted capped
grope $G^c$, as in Lemma~\ref{lem:twisted-tower-to-grope}. 
It will follow from an extension of \emph{grope duality} \cite{KT} to the setting of twisted capped gropes, together with Dwyer's theorem \cite{Dw}, that we can compute the link
longitudes in $\pi_1(B^4\setminus G^c)$ instead of $\pi_1(S^3\setminus L)$. Via the capped grope duality construction the
iterated commutators determined by the longitudes will be seen to correspond exactly to the image 
of $\tau_n^\iinfty(\cW)$ under the map $\eta_n$.
To preview the computation of the longitudes the reader can examine Figures~\ref{bing-hopf-example-with-tower} and \ref{bing-hopf-example-tower-and-grope-with-tree} which show
the Whitney tower-to-capped grope conversion for $L$ the Bing-double of the Hopf link.
It should be clear from the right-hand side of Figure~\ref{bing-hopf-example-tower-and-grope-with-tree} that the longitude for component $L_1$ is a triple commutator $[x_2,[x_3,x_4]]$ of meridians to the other components, as exhibited by the class $3$ capped grope $G_1^c$ bounded by $L_1$ and containing the order $2$ tree. As a consequence of grope duality, it will turn out that a the other longitudes
also bound class $3$ gropes which correspond to choosing roots at the other univalent vertices on the same order $2$ tree
(although these gropes are not so visible in the figure).

For the reader's convenience we recall from the introduction the definition of the map $\eta_n:\cT^\iinfty_n\to \sD_n$,
where $\sD_n=\sD_n(m)$ is the kernel
of the bracket map $\sL_1 \otimes \sL_{n+1}\rightarrow \sL_{n+2}$.

For $v$ a univalent vertex of an order $n$ tree
$t$, denote by $B_v(t)\in\sL_{n+1}$ the
Lie bracket of generators $X_1,X_2,\ldots,X_m$ determined by the formal bracketing from $\{1,2,\ldots,m\}$
which is gotten by considering $v$ to be a root of $t$.

Denoting the label of a univalent vertex $v$ by $\ell(v)\in\{1,2,\ldots,m\}$, the
map $\eta_n:\cT^\iinfty_n\rightarrow \sL_1 \otimes \sL_{n+1}$
is defined on generators by
$$
\eta_n(t):=\sum_{v\in t} X_{\ell(v)}\otimes B_v(t)
\quad \, \,
\mbox{and}
\quad \, \,
\eta_n(J^\iinfty):=\frac{1}{2}\eta_n(\langle J,J \rangle)
$$
where the first sum is over all univalent vertices $v$ of $t$, and the second expression is indeed in $\sL_1 \otimes\sL_{n+1}$ since the coefficient of $\eta_n(\langle J,J \rangle)$ is even.

The following lemma is proved in Section~\ref{subsec:lemma-eta-well-defined-proof}:
\begin{lem}\label{lem:eta-well-defined}
The homomorphism $\eta_n:\cT^\iinfty_n\rightarrow \sD_n$ is a well-defined surjection.
\end{lem}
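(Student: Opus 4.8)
The plan is to break this into two independent parts: that $\eta_n$ is \emph{well-defined} (as a map $\cT^\iinfty_n \to \sL_1 \otimes \sL_{n+1}$, and that it lands in the subgroup $\sD_n$), and that it is \emph{surjective} onto $\sD_n$. For well-definedness, I would first verify that the defining formulas respect all the relations listed in Definitions~\ref{def:Tau} and~\ref{def:T-infty}. The AS and IHX relations on ordinary order-$n$ trees: for AS this is immediate since reversing the cyclic order at a single trivalent vertex changes the sign of $B_v(t)$ simultaneously for \emph{every} choice of root $v$ (a bracket picks up one sign per flipped vertex), so $\eta_n(-t) = -\eta_n(t)$; for IHX one checks that $\sum_{v} X_{\ell(v)}\otimes B_v(\,\cdot\,)$ applied to the $I=H-X$ (or $I-H+X=0$) combination vanishes, which reduces to the Jacobi identity in $\sL_{n+1}$ applied rootwise — the univalent vertices away from the local $I/H/X$ picture contribute the Jacobi relation at the relevant internal edge, and the three ``new'' univalent vertices (none, since IHX doesn't change the leaf set) — actually all leaves are shared, so the check is purely that the rooted brackets satisfy Jacobi, which they do. In even orders I must additionally check the boundary-twist relations (odd orders), and the symmetry, twisted-IHX, and interior-twist relations (even orders). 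Symmetry $(-J)^\iinfty = J^\iinfty$ holds because $\langle -J,-J\rangle = \langle J,J\rangle$ (two sign flips cancel), so $\eta_n((-J)^\iinfty) = \tfrac12\eta_n(\langle J,J\rangle) = \eta_n(J^\iinfty)$. The interior-twist relation $2 J^\iinfty = \langle J,J\rangle$ is built into the definition ($\eta_n(J^\iinfty) := \tfrac12 \eta_n(\langle J,J\rangle)$), and I should note the coefficients of $\eta_n(\langle J,J\rangle)$ are genuinely even — this follows since $\langle J,J\rangle$ has a symmetry swapping its two halves, so for each leaf $v$ in one half there is a mirror leaf $v'$ with $B_v = B_{v'}$ and $\ell(v)=\ell(v')$, pairing up the terms. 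The twisted-IHX relation $I^\iinfty = H^\iinfty + X^\iinfty - \langle H,X\rangle$ requires expanding $\tfrac12\eta_n\langle I,I\rangle$, $\tfrac12\eta_n\langle H,H\rangle$, $\tfrac12\eta_n\langle X,X\rangle$ and $\eta_n\langle H,X\rangle$ and checking the identity; since $I = H - X$ as rooted trees (by IHX on the rooted product), one has $\langle I,I\rangle = \langle H,H\rangle - 2\langle H,X\rangle + \langle X,X\rangle$ formally, and dividing by $2$ gives exactly the claimed relation — this is the cleanest of the even-order checks. The boundary-twist relation $i -\!\!\!<^J_J = 0$ in $\cT^\iinfty_{2k-1}$: here the tree $\langle (i), (J,J)\rangle$ — wait, more precisely the tree obtained by attaching leaf $i$ to the pair $(J,J)$ — I must show $\eta_{2k-1}$ kills it; this should follow from an antisymmetry: the tree has an automorphism swapping the two $J$-branches which acts by $-1$ on orientation (it flips an odd number of trivalent vertices, the count being the number of internal vertices of $J$ plus one), while fixing the leaf set and the map $v\mapsto (X_{\ell(v)}, B_v)$ up to that swap, forcing $\eta = -\eta$ hence $\eta = 0$ — I need to get the parity of flipped vertices right, and this is the spot where I expect sign-bookkeeping to be delicate.

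For the claim that $\eta_n$ lands in $\sD_n = \ker(\sL_1\otimes\sL_{n+1}\to\sL_{n+2})$: compose with the bracket map. For a tree $t$, $\sum_v [X_{\ell(v)}, B_v(t)]$ — each term is the full bracket of the whole tree $t$ re-rooted to have a ``stem'' at $v$ then bracketed back with $X_{\ell(v)}$, which is the same as the bracket of the tree $t$ with an extra edge... in fact $\sum_v [X_{\ell(v)}, B_v(t)] = 0$ is exactly the ``cyclic symmetry'' / ``switchback'' identity in the free Lie algebra: summing over all ways of singling out a leaf and bracketing gives zero by repeated Jacobi — this is a standard fact (cf.\ the reference to \cite{FT2} for cyclic symmetry of $\sD_n$), and I would cite it or give the two-line inductive argument on the number of leaves. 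For the $\iinfty$-generators it follows since $\langle J,J\rangle$ is itself a tree and we've just handled trees, and dividing by $2$ stays in the kernel.

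For \textbf{surjectivity}, I would argue that $\eta_n(\cT_n) = \sD_n$ already using only ordinary trees. The standard fact (Orr \cite{O}, and as used throughout \cite{WTCCL}) is that $\sD_n$ is spanned by the images $\eta_n(t)$ of order-$n$ trees — indeed the map $\eta_n$ on the ``tree group'' $\cT_n$ is essentially Levine's map whose surjectivity onto $\sD_n$ is classical and was reproved in this circle of work; more concretely, $\sD_n$ is spanned by elements $X_i \otimes B - X_j\otimes B'$ coming from a single bracket with two of its leaves designated, and such an element is visibly (up to the kernel) in the image of $\eta_n$ of the corresponding tree. I would spell out: given a generator $X_i \otimes w \in \sL_1\otimes\sL_{n+1}$ lying in $\sD_n$, use the tree $t$ obtained from the bracket $w$ by adding a leaf labeled $i$ at its root; then $\eta_n(t) = X_i\otimes w + (\text{other terms})$, and an induction on $n$ (or a direct linear-algebra count using $\operatorname{rank}\sD_n = m r_{n+1} - r_{n+2}$) shows these span.

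The \textbf{main obstacle} I anticipate is not any single deep step but the even-order sign bookkeeping — specifically verifying the boundary-twist and twisted-IHX relations with all orientation conventions consistent, and pinning down exactly why the coefficients of $\eta_n(\langle J,J\rangle)$ are even (so that $\tfrac12\eta_n(\langle J,J\rangle)$ is an honest integral element of $\sL_1\otimes\sL_{n+1}$) — this parity is what makes the whole definition on $\iinfty$-trees legitimate, and it rests on the branch-swap symmetry of $\langle J,J\rangle$ acting trivially on the relevant data. Surjectivity, by contrast, I expect to be routine given the rank formulas and the classical identification of $\sD_n$ with Orr's group of Milnor invariants.
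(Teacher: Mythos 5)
Your overall structure is correct and your verification of the symmetry, interior-twist, and twisted-IHX relations matches the paper's computation essentially verbatim, but there are two notable points of divergence worth flagging.

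\textbf{Leverage of Levine's result.} The paper does not verify the AS and IHX relations directly, does not argue directly that $\eta_n$ lands in $\sD_n$, and does not prove surjectivity by hand. Instead it cites Levine's theorem (from his addendum to the homology-cylinder paper, here reference~[L2]) that the analogous map $\eta'_n\colon\cT_n\to\sD'_n$ into the \emph{quasi-Lie} kernel is a well-defined surjection; since $\sL$ and $\sD_n$ are quotients of $\sL'$ and $\sD'_n$, everything you want on the tree part of $\cT^\iinfty_n$ drops out in a single stroke, and the only thing left to check are the relations peculiar to $\cT^\iinfty_n$. Your direct attempt is conceptually sound but considerably more work, and your treatment of the IHX case and of surjectivity is quite informal (hand-waving about ``rootwise Jacobi'' and ``a direct linear-algebra count''); if you wanted to avoid citing Levine you would need to flesh these out substantially. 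Citing Levine is both shorter and more robust, since it also gives the quasi-Lie refinement used elsewhere in this circle of papers.

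\textbf{The boundary-twist relation.} Here your approach differs from the paper's and contains an error. The paper computes $\eta_n(\langle(i,J),J\rangle)$ directly: the root at the $i$-leaf gives the symmetric bracket $[J,J]=0\in\sL_{n+1}$, and the remaining terms cancel in pairs corresponding to the two copies of $J$. You instead argue that the tree $t=\langle(i,J),J\rangle$ is $2$-torsion (via the branch-swapping automorphism and the AS relation) and then use torsion-freeness of $\sD_n$. That strategy is valid, but your count of the number of trivalent vertices at which the automorphism reverses orientation is wrong: you claim it is ``the number of internal vertices of $J$ plus one,'' which would be $k$ when $J$ has order $k-1$, and hence \emph{even} when $k$ is even --- in which case $\sigma$ would be orientation-preserving and your argument would give nothing. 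In fact the correct count is exactly \emph{one}: the branch swap reverses the cyclic order at the single central trivalent vertex where $i$ meets the two $J$-subtrees, while inside the two $J$-branches the swap is an orientation-preserving isomorphism of identically oriented copies of $J$, contributing no sign. So $\sigma(t)=-t$ always holds, and your conclusion is right, but for the wrong reason; with your stated count the argument would break in half the cases. (The same orientation-preserving-inside-$J$ observation is what justifies your claim that the coefficients of $\eta_n\langle J,J\rangle$ pair up and are even --- there you correctly get $B_v=B_{v'}$ with matching signs because no vertex flips.)
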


We will also use:
\begin{lem}\label{lem:grope-duality}
If $L\subset S^3$ bounds a class $(n+1)$ twisted capped grope
$G^c\subset B^4$, then the inclusion $S^3\setminus
L\hookrightarrow B^4\setminus G^c$ induces an isomorphism
$$
\frac{\pi_1(S^3\setminus L)}{\pi_1(S^3\setminus L)_{n+2}} \cong \frac{\pi_1(B^4\setminus
G^c)}{\pi_1(B^4\setminus G^c)_{n+2}}.
$$
\end{lem}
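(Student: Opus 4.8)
The plan is to apply Dwyer's theorem \cite{Dw} to the inclusion $i\colon S^3\setminus L\hookrightarrow B^4\setminus G^c$. First I would pass to regular neighborhoods: since $G^c$ is a tame $2$--complex properly embedded in $B^4$ with $G^c\cap S^3=L$, the inclusion in the statement is homotopy equivalent to $S^3\setminus N(L)\hookrightarrow B^4\setminus N(G^c)$ for a regular neighborhood $N(G^c)$, and $\pi_1$ is unchanged. By Dwyer's theorem it then suffices to verify two things: (a) $i_*$ induces an isomorphism on $H_1$, and (b) the map $H_2(S^3\setminus L)\to H_2(B^4\setminus N(G^c))/\phi_{n+2}$ is onto, where $\phi_{n+2}=\ker\big(H_2(B^4\setminus N(G^c))\to H_2(\pi_1(B^4\setminus N(G^c))/\pi_1(B^4\setminus N(G^c))_{n+1})\big)$ is Dwyer's subgroup. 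This is stronger than what Stallings' theorem would give (we do \emph{not} expect $H_2$ to map onto). For capped gropes with no twisted caps, (a) and (b) are precisely the content of grope duality \cite{KT} (see also \cite{S1}), so the real content here is the extension to the twisted case; I would follow the argument of \cite{KT} verbatim except at the twisted caps.

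\textbf{The $H_1$ condition.} This is essentially unchanged from the framed case. A twisted cap is still an embedded $2$--disk, so it contributes nothing new to first homology, and a standard Alexander-duality computation (carried out, say, in $S^4=B^4\cup_{S^3}B^4$ via Mayer--Vietoris, exactly as in \cite{KT}) shows that $H_1(B^4\setminus N(G^c))\cong\Z^m$ with generators the meridians of the bottom-stage surfaces of the $G^c_i$; each bottom stage is ``capped off'' by its caps, so no other $1$--cycles survive. These bottom-stage meridians are exactly the meridians of $L$, and $i_*$ carries the generating meridians of $S^3\setminus L$ onto them, so $i_*$ is an isomorphism on $H_1$.

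\textbf{The $H_2$ condition.} I claim $\phi_{n+2}=H_2(B^4\setminus N(G^c))$, which makes (b) vacuous. By the (twisted) grope-duality construction, $B^4\setminus N(G^c)$ deformation retracts onto a regular neighborhood of a dual twisted capped grope of class $n+1$; consequently every class in $H_2(B^4\setminus N(G^c))$ is represented by a closed oriented surface $S$ admitting a symplectic basis of curves that bound gropes of class $n+1$ in the complement, hence represent elements of $\pi_1(B^4\setminus N(G^c))_{n+1}$. Such an $S$ then maps trivially on $\pi_1$ into $\pi_1(B^4\setminus N(G^c))/\pi_1(B^4\setminus N(G^c))_{n+1}$, so it is null-homotopic there and $[S]\in\phi_{n+2}$. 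Thus $\phi_{n+2}=H_2$, (b) holds trivially, and Dwyer's theorem yields the desired isomorphism
$\pi_1(S^3\setminus L)/\pi_1(S^3\setminus L)_{n+2}\cong\pi_1(B^4\setminus G^c)/\pi_1(B^4\setminus G^c)_{n+2}$.

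\textbf{The main obstacle: duality at twisted caps.} The one genuinely new point is describing the dual object associated to a nontrivially twisted cap and checking its class. For an ordinary framed cap $c$ with its single transverse intersection with a bottom stage, the dual is a ``Clifford-torus-type'' surface tubed into parallel push-offs of the stages of $G^c$ lying below $c$, exactly as in \cite{KT}. For an $\pm1$--twisted cap $c^\iinfty_J$ — which by definition of a class $n+1$ twisted capped grope has order at least $n/2$ and no interior intersections — it is instead the nontrivial normal framing that produces a dual torus, and its two symplectic curves are each a parallel copy of the boundary of the $J$--branch of $G^c$ pushed into the complement; this is the geometric source of the relation $2\cdot J^\iinfty=\langle J,J\rangle$. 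Since $J$ has order $\geq n/2$, the ``doubled'' dual grope that these curves bound has class $\geq 2\cdot(n/2)+1=n+1$, which is exactly what the Dwyer argument above requires. Verifying carefully that the framed and twisted dual surfaces together generate $H_2(B^4\setminus N(G^c))$, and that the curves on them bound gropes of class exactly $n+1$ (indeed disjointly embedded framed ones, though only the weaker algebraic statement is needed above), is the heart of the matter and the place where one must go beyond \cite{KT}; once this is established, the three steps above complete the proof.
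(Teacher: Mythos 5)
Your strategy matches the paper's: apply Dwyer's theorem, verify the $H_1$ condition via Alexander duality, and then show the ``excess'' second homology lies in the Dwyer filtration by generalizing the grope duality of \cite{KT} to the twisted setting. The $H_1$ step is fine and essentially identical to the paper. The issue is in the $H_2$ step, which you yourself flag as ``the heart of the matter'' and leave unverified, and the sketch you give of how that step would go contains a concrete error.

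Specifically, you write that for a twisted cap $c^\iinfty_J$ ``it is instead the nontrivial normal framing that produces a dual torus,'' suggesting twisted caps contribute their own generators of $H_2(B^4\setminus N(G^c))$. They do not. By definition a twisted cap is \emph{clean}: it is embedded with interior disjoint from everything else, and in particular has no intersection with a bottom stage. Since $H_2$ of the complement is insensitive to framings and is generated (by Alexander duality) precisely by the Clifford tori around the cap--bottom-stage intersections, the clean twisted caps contribute \emph{no} Clifford tori and hence no new $H_2$ classes. The twisted caps enter only indirectly: when you run the KT grope construction starting from the Clifford torus of a \emph{framed} cap $c_k$ lying in the same dyadic branch as $c^\iinfty_J$, the construction eventually demands a subgrope bounded by a normal push-off of $\partial c^\iinfty_J$. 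Because $c^\iinfty_J$ is $\pm1$-twisted, this push-off \emph{is} (up to sign) the meridian $x_J$ to the cap, which in turn bounds the usual dual torus for the $J$-branch; this is what ``reflects'' the $J$-subtree and turns the associated tree into the doubled tree $\langle J,J\rangle$, yielding order $n$ and closed grope class $n+2$. Your ``doubled dual grope'' intuition and class count $\geq 2(n/2)+1$ are aimed in the right direction, but the doubling happens inside the grope built on a framed cap's Clifford torus, not on a dual torus attached to the twisted cap itself.

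Two smaller points. First, you assert that $B^4\setminus N(G^c)$ deformation retracts onto a regular neighborhood of a dual twisted capped grope of class $n+1$; this is stronger than what grope duality provides or what is needed. The paper (following \cite{KT}) only identifies a generating set for $H_2$ and shows each generator extends to a closed grope of high class; no global deformation retraction is claimed. Second, the relation you invoke, $2\cdot J^\iinfty=\langle J,J\rangle$, is the interior-twist relation and has a different geometric origin (self-intersections of twisted Whitney disks); the relevant fact here is rather that the push-off of $\partial c^\iinfty_J$ equals the meridian $x_J$, which is what makes the dual construction go through. Filling in the step you deferred, with the corrected role of the twisted caps, is exactly what the paper's proof does.
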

The proof of Lemma~\ref{lem:grope-duality}
is given below in
Section~\ref{subsec:grope-duality-lemma-proof}.

\subsection{Computing the longitudes}\label{subsec:computing-the-longitudes} 
By
Lemma~\ref{lem:grope-duality} we can compute the iterated commutators determined by the link longitudes
in $\pi_1(B^4\setminus G^c)$ modulo $\pi_1(B^4\setminus G^c)_{n+2}$. The computation will show that the
longitudes lie in $\pi_1(B^4\setminus G^c)_{n+1}$, which implies that 
$\mu_k (L)$ vanishes for all $k<n$. 

\textsc{Terminology note:} Throughout this proof we will use the word \emph{meridian} to refer to
fundamental group elements represented by normal circles to deleted surfaces in $4$--space; and on occasion such
circles will themselves be referred to as ``meridians''.

\textsc{Conventions:}
Via the
isomorphisms of Lemma~\ref{lem:grope-duality} and
Section~\ref{subsec:intro-Milnor-review} we make the identifications
$$
\frac{\pi_1(B^4\setminus G^c)_{n+1}}{\pi_1(B^4\setminus
G^c)_{n+2}}\cong\frac{\pi_1(S^3\setminus L)_{n+1}}{\pi_1(S^3\setminus L)_{n+2}}\cong\frac{F_{n+1}}{F_{n+2}}
$$
where the generators $\{x_1,x_2,\ldots,x_m\}$ are meridians to the
bottom stages of $G^c$, with $x_i$ chosen to have linking number
$+1$ with the bottom stage of the grope component $G_i$ which is bounded
by $L_i$.

Orientations of surface sheets and their boundary circles are related by the usual ``outward
vector first'' convention.

We use the commutator notation $[g,h]:=ghg^{-1}h^{-1}$, and exponential notation $g^h:=hgh^{-1}$
for group
elements $g$ and $h$.

Since an element in $\frac{F_{n+1}}{F_{n+2}}$ determined by an
$(n+1)$-fold commutator of elements of $\frac{F}{F_{n+2}}$ only depends on the conjugacy classes of
the elements, we can and will suppress basings of meridians from
computations. This follows easily from the commutator relation $[xy,z]=[y,z]^x[x,z]$ which holds in any group.
The following relations in $\frac{F_{n+1}}{F_{n+2}}$ will be useful:

For any length $n+1$ commutator $[x_I,x_J]$, and $\epsilon=\pm 1$,
\begin{equation}
[x_I,x^\epsilon_J]=[x^\epsilon_I,x_J]=[x^{-\epsilon}_J,x_I]=[x^\epsilon_J,x_I]^{-1}=[x^\epsilon_J,x^{-1}_I].
\label{eqno1}
\end{equation}

\begin{figure}[h]
\centerline{\includegraphics[width=110mm]{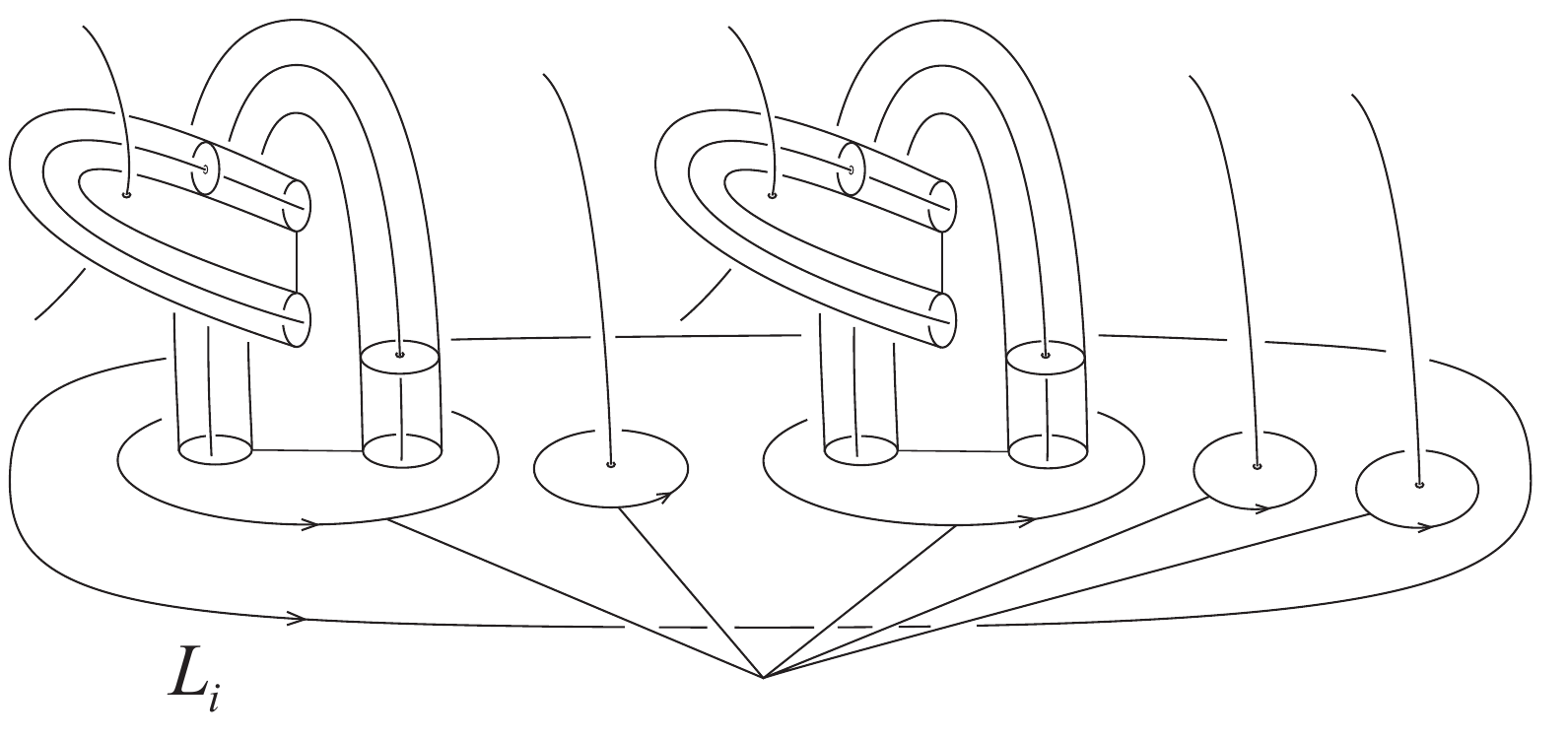}}
         \caption{A parallel push-off of $L_i$ is isotopic to a product of loops which are boundary circles of
         parallel push-offs of dyadic branches of $G_i$, or meridional circles to framed caps of $G^c$. So the corresponding factors
         $\gamma_{i_r}$ of the $i$th longitude $\gamma_i=\prod_r \gamma_{i_r}$ are in one-to-one correspondence with the $i$-labeled
         vertices of the trees in $t(G^c)$.}
         \label{longitude-Li-fig}
\end{figure}

For $L$ bounding $\cW$ of positive order, the longitudes $\gamma_i$ are represented by $0$-parallel
push-offs of the link components. As illustrated in
Figure~\ref{longitude-Li-fig}, each
longitude factors as $\gamma_i=\prod_r \gamma_{i_r}$, with each
$\gamma_{i_r}$ represented by either a parallel push-off of the
boundary of a dyadic branch of $G_i$, or a meridian to a
framed cap in $G^c$. (The ordering of the factors of $\gamma_i$ is
irrelevant since $\frac{F_{n+1}}{F_{n+2}}$ is abelian.)

For each $i$, the factors $\gamma_{i_r}$ are in
one to one correspondence with the set of $i$-labeled vertices $v_{i_r}$ on all the trees in
$t(G^c)$ (since each $i$-labeled univalent vertex on a tree corresponds either to an intersection between a framed cap and
the bottom stage of $G_i$, or to the $i$-labeled vertex sitting in the bottom stage of a dyadic branch of $G_i$). To finish the proof of Theorem~\ref{thm:Milnor invariant} it
suffices to check that each $\gamma_{i_r}$ is equal to the iterated
commutator $\beta_{v_{i_r}}(t)^\epsilon\in\frac{F_{n+1}}{F_{n+2}}$ determined by
putting a root at $v_{i_r}$ on the tree $\epsilon\cdot t\in t(G^c)$
containing $v_{i_r}$,
or equal to $\beta_{v_{i_r}}(\langle J,J \rangle)^\epsilon$
for $v_{i_r}$ in an $\iinfty$-tree $\epsilon\cdot  J^\iinfty\in t(G^c)$. 
Here the correspondence between rooted trees and iterated commutators is directly analogous to the correspondence with Lie brackets, and   
the isomorphism
 $\frac{F_{n+1}}{F_{n+2}}\cong \sL_{n+1}$ in the definition (\ref{subsec:intro-Milnor-review}) of $\mu_n(L)$ maps 
 $\beta_{v_{i_r}}(t)^\epsilon$  to the Lie bracket $\epsilon \cdot B_{v_{i_r}}(t)\in\sL_{n+1}$
as in the definition 
of the map $\eta_n$.
Similarly for $\iinfty$-trees $J^\iinfty$, $\beta_{v_{i_r}}(\langle J,J \rangle)^\epsilon$ maps to the correct 
Lie bracket $\epsilon \cdot B_{v_{i_r}}(\langle J,J \rangle)\in\sL_{n+1}$ if $n$ is even.
Then $\mu_n^i(L)$ is the sum of these Lie brackets over all the $v_{i_r}$.

\subsubsection{The order 1 case:}\label{subsec:main-proof-order-1}
As a warm-up and base case for the general proof we check that $\eta_1$ takes
$\tau^\iinfty_1(\cW)$ to $\mu_1(L)$ (the ``triple linking numbers'' of $L$) for any order $1$ twisted Whitney tower $\cW$ bounded by $L$. In this case the grope
construction yields a class $2$ twisted capped grope $G^c$
bounded by $L$, with intersection forest $t(G^c)$ a
disjoint union of signed order $1$ $\sY$-trees representing
$\tau^\iinfty_1(\cW)$.  The body $G$ is just a collection of disjointly embedded surfaces, and there are no twisted caps (since 
odd-order twisted Whitney towers do not contain twisted Whitney disks).

First consider the case where
$t(G^c)=\epsilon_p\cdot t_p=\epsilon_p\cdot \langle (i,j),k\rangle$ is a single
$\sY$-tree, with $i$, $j$ and $k$ distinct, and $G$ consists of a single genus one surface $G_i$ bounded by $L_i$, together with disjointly embedded disks $G_j$ and $G_k$ bounded by the link components $L_j$ and $L_k$ (Figure~\ref{order-one-longitude-A1-fig}).
\begin{figure}
\centerline{\includegraphics[width=60mm]{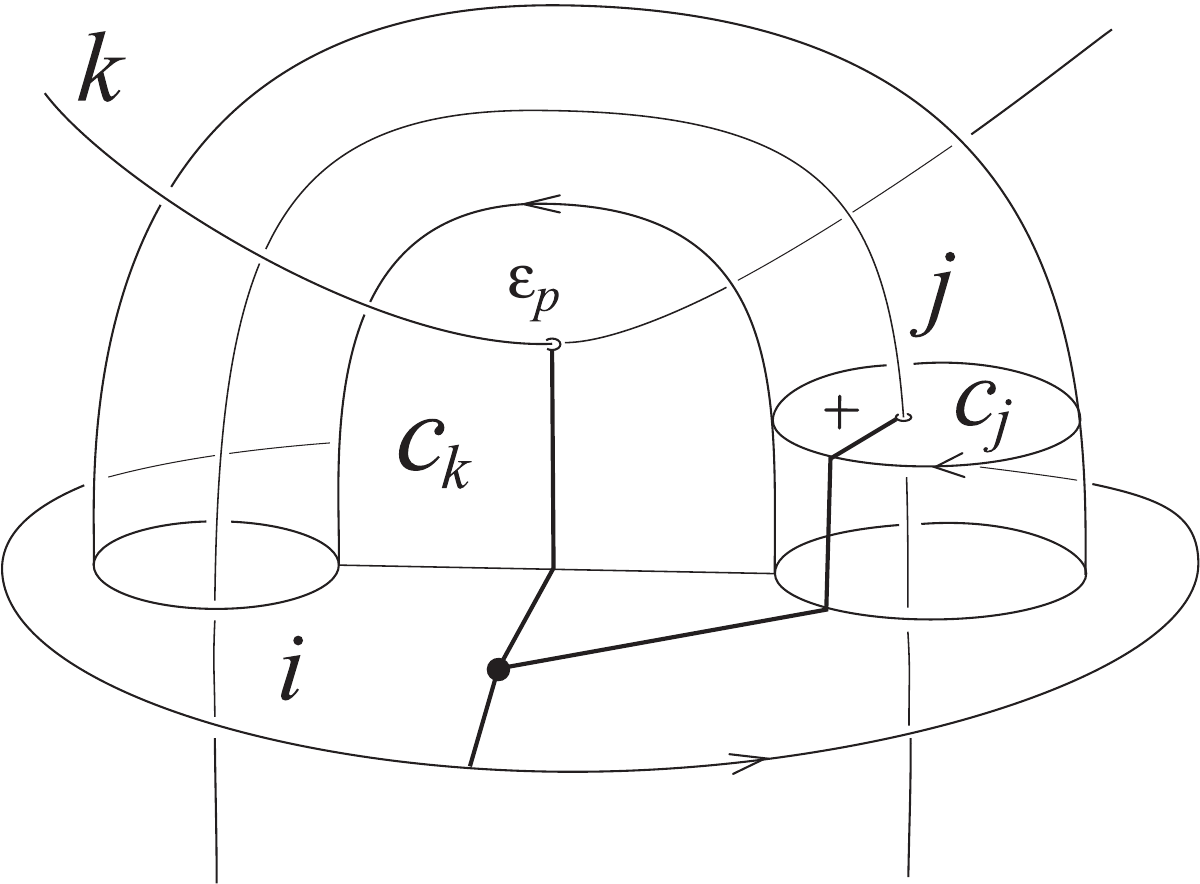}}
         \caption{Near the trivalent vertex of the signed $\sY$-tree $\epsilon_p\cdot t_p=\epsilon_p\cdot\langle (i,j),k\rangle$ in a dyadic class $2$ capped grope component (capped surface) bounded by $L_i$.}
         \label{order-one-longitude-A1-fig}
\end{figure}

We want to check that: 
$$
\mu_1(L)=\eta_1(\epsilon_p\cdot \langle (i,j),k\rangle)= 
\epsilon_p\cdot X_i\otimes\,-\!\!\!\!\!-\!\!\!<^{\,\,k}_{\,\,j}\,+ \,\, \epsilon_p\cdot X_j\otimes \,{\scriptstyle i}-\!\!\!\!\!-\!\!\!<^{\,\,k}_{\,\,}\,  + \,\, \epsilon_p\cdot X_k\otimes \, {\scriptstyle i}\,-\!\!\!\!\!-\!\!\!<^{\,\,}_{\,\,j}
$$

A parallel push-off of $L_i$ bounds a parallel push-off of $G_i$ in $B^4\setminus G^c$ and 
the longitude $\gamma_i$ can be computed from
Figure~\ref{order-one-longitude-A1-fig} (using the commutator relations
(\ref{eqno1}) above):
$$
\gamma_i=[x^{-1}_j,x^{-\epsilon_p}_k]=[x_j,x_k]^{\epsilon_p}
$$
This is the correct commutator $\beta_{v_i}( t_p)^{\epsilon_p}\in\frac{F_2}{F_3}$ 
corresponding to choosing a root for $t_p$ at the
$i$-labeled vertex $v_i$, confirming the first term in the right-hand side of the above expression for $\mu_1(L)$:
$$ 
X_i\otimes \,\epsilon_p\cdot B_{v_i}(t_p)=X_i\otimes\,\epsilon_p\cdot[X_j,X_k]=X_i\otimes\mu_1^i(L)
$$

\begin{figure}
\centerline{\includegraphics[width=60mm]{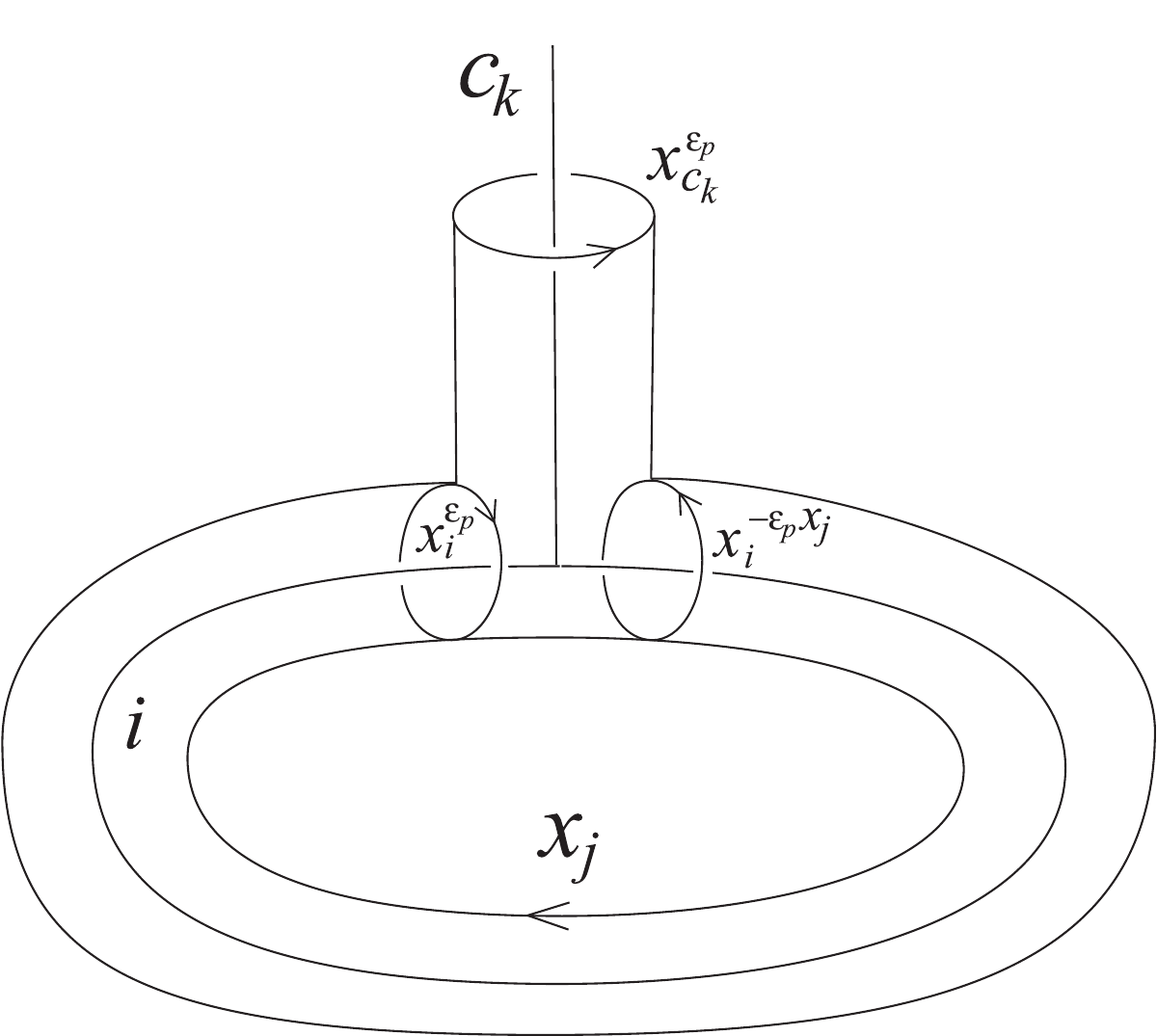}}
         \caption{A meridian to the cap $c_k$ in Figure~\ref{order-one-longitude-A1-fig} bounds a genus one surface which is a punctured normal torus to the surface stage containing the cap boundary. This normal torus consists of circle fibers in the normal circle bundle over a dual circle to the cap boundary in the surface stage. This dual circle is parallel to the boundary of the dual cap (which in Figure~\ref{order-one-longitude-A1-fig} represents the meridian $x_j$). Since the (closed) normal torus has a single intersection with the cap it is also called a ``dual torus'' for the cap.}
         \label{normal-torus-fig}
\end{figure}
A parallel push-off of $L_k$ bounds a parallel push-off of the embedded disk $G_k$ in $B^4\setminus G$,
with $G_k$ intersecting $G^c$ in the single point $p\in c_k$ with sign $\epsilon_p$.  Thus, the longitude $\gamma_k$
is equal to $x_{c_k}^{\epsilon_p}\in\frac{F}{F_{n+2}}$, the positive meridian $x_{c_k}$ to the
cap $c_k$ raised to the power $\epsilon_p$. This meridian can be expressed
in terms of the generators using the ``dual torus'' to $c_k$
illustrated in Figure~\ref{normal-torus-fig}, giving:
$$
\gamma_k=x_{c_k}^{\epsilon_p}=x^{\epsilon_p}_ix_i^{-\epsilon_px_j}=
x^{\epsilon_p}_ix_jx_i^{-\epsilon_p}x_j^{-1}=[x^{\epsilon_p}_i,
x_j]=[x_i,x_j]^{\epsilon_p}
$$
which is the correct commutator $\beta_{v_k}( t_p)^{\epsilon_p}$ 
when the root of $t_p$ is at the
$k$-labeled vertex $v_k$. (One way to check this expression for
$x_{c_k}^{\epsilon_p}$ directly from
Figure~\ref{order-one-longitude-A1-fig} is to push the $k$-sheet
down off $c_k$ into the $i$-sheet by a finger move (the vertical
tube in Figure~\ref{normal-torus-fig}) to get a cancelling pair of
intersection points which correspond to the factors
$x^{\epsilon_p}_i$ and $x_i^{-\epsilon_px_j}$.)
This confirms the third term in the right-hand side of the expression for $\mu_1(L)$:
$$ 
X_k\otimes \epsilon_p\cdot B_{v_k}(t_p)=X_k\otimes\epsilon_p\cdot[X_i,X_j]=X_k\otimes\mu_1^k(L)
$$

By similarly using a dual torus, one can also check that the contribution
to $\gamma_j$ coming from the intersection point in the cap $c_j$
is equal to $\beta_{v_j}(t_p)^{\epsilon_p}=[x_k,x_i]^{\epsilon_p}$,
confirming the second term in the right-hand side of the expression for $\mu_1(L)$:
$$ 
X_j\otimes \epsilon_p\cdot B_{v_j}(t_p)=X_j\otimes\epsilon_p\cdot[X_k,X_i]=X_j\otimes\mu_1^j(L)
$$

Since all other link components bound disjointly embedded disks,
this confirms Theorem~\ref{thm:Milnor invariant} in this case where 
$t(\cW)=t(G^c)=\epsilon_p\cdot t_p=\epsilon_p\cdot \langle (i,j),k\rangle$
with $i$, $j$ and $k$ distinct. If $i$, $j$ and $k$ are not distinct, then $t_p=0\in\cT^\iinfty_1$ by the boundary-twist relations
$\langle (i,j),j\rangle=0$ (and the just-described computation will show that $t_p$ contributes trivially to $\mu_1(L)$, since $[x_j,x_j]=0$ and $[x_j,x_i]+[x_i,x_j]=0$).
The general
order $1$ case follows by summing the above computation over all factors of each longitude.

\subsubsection{The higher-order framed case:}
\begin{figure}
\centerline{\includegraphics[width=65mm]{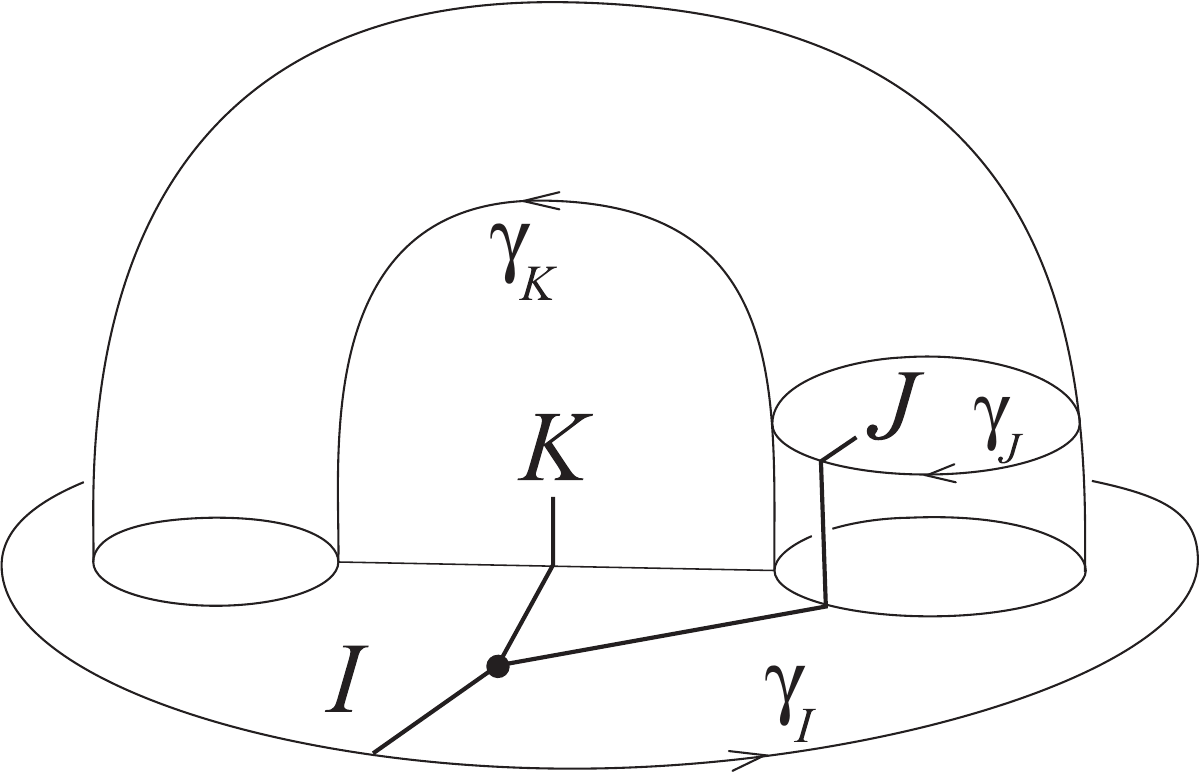}}
         \caption{Near a trivalent vertex in a dyadic branch of $G^c$.}
         \label{higher-order-longitude-vertex-fig}
\end{figure}
Now consider the general order $n$ case with the assumption that $\cW$ contains no twisted Whitney disks,
so that $G^c$ is a class $n+1$ capped grope with no twisted caps. 
That the longitude factors are equal to the iterated commutators
corresponding to putting roots at the univalent vertices of $t(G^c)$ for $n>1$
will follow by applying the computations for $n=1$ to recursively
express the relations between meridians and push-offs of
boundaries of surface stages of $G^c$ at an arbitrary trivalent
vertex of $t(G^c)$. As before we start by considering the case where $G^c$ consists of a single dyadic branch containing $t_p=t(G^c)$:

Figure~\ref{higher-order-longitude-vertex-fig}
shows three surface stages in $G^c$ around a trivalent
vertex which decomposes the (un-rooted) tree $t_p$
into three (rooted) subtrees $I$, $J$, and $K$ (whose roots are identified at the trivalent vertex), 
with the $I$-subtree
reaching down to the bottom stage of $G^c$, and where we assume for the moment that $J$ and $K$ are of positive order (so the $J$- and $K$-sheets are not caps).
Push-offs of the
boundaries of the stages represent fundamental group elements $\gamma_I$,
$\gamma_J$, and $\gamma_K$; and we denote by $x_I$, $x_J$, and
$x_K$ meridians to these stages.

The same computations as in the $n=1$ case now give the three
relations:
$$
\gamma_I=[\gamma_J,\gamma_K],\quad x_J=[\gamma_K,x_I],\quad
\mbox{and}\quad x_K=[x_I,\gamma_J]
$$

If either of $J$ or $K$ is order zero, say $K=k$, then the  
corresponding cap $c_k$ intersects the bottom stage $G_k$, and so
the cap boundary (labeled $\gamma_K$ in Figure~\ref{higher-order-longitude-vertex-fig}) will be a meridian $x_k$ to $G_k$, 
and the cap meridian $x_K$ will be denoted $x_{c_k}$; and the relations become: 
$$
\gamma_I=[\gamma_J,x_k],\quad x_J=[x_k,x_I],\quad
\mbox{and}\quad x_{c_k}=[x_I,\gamma_J]
$$

It follows recursively that, when $J$ and $K$ are of positive order, each of $\gamma_I$, $x_J$, and
$x_K$ are equal to the iterated commutators in the generators corresponding
to $I$, $J$ and $K$:
$$
\gamma_I=[J,K],\quad x_J=[K,I],\quad
\mbox{and}\quad x_K=[I,J].
$$
And if $K=k$ is order zero, then we have 
$$
\gamma_I=[J,x_k],\quad x_J=[x_k,I],\quad
\mbox{and}\quad x_{c_k}=[I,J]
$$
with similar relations for order zero $J=j$.

\begin{figure}
\centerline{\includegraphics[width=75mm]{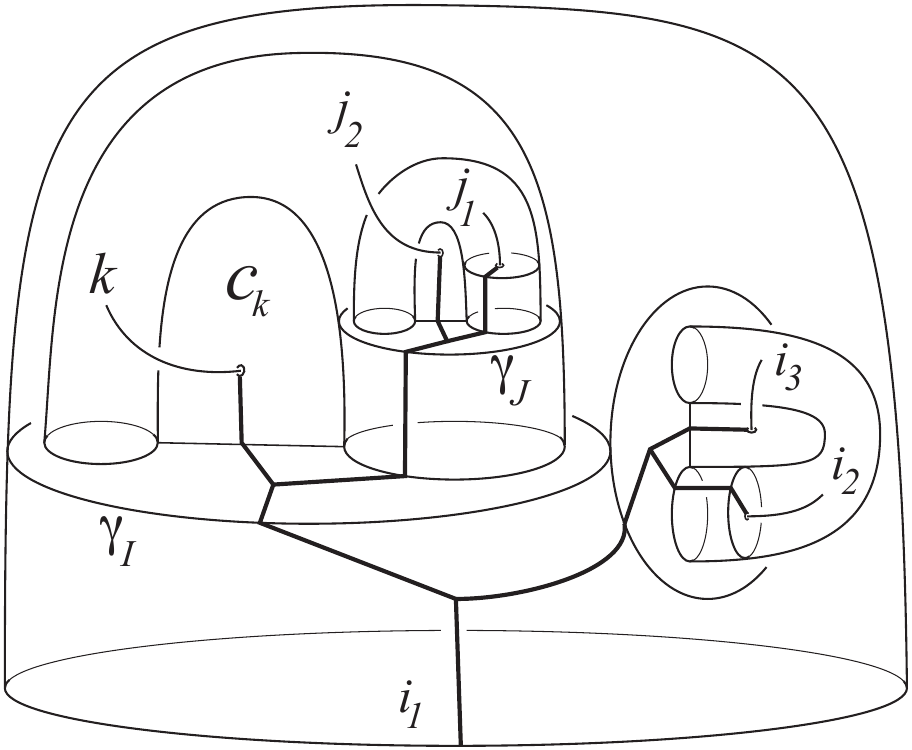}}
         \caption{An example of Figure~\ref{higher-order-longitude-vertex-fig} with $I=(i_1,(i_2,i_3))$ of order $2$,
         $J=(j_1,j_2)$ of order $1$, and 
         $K=k$ of order zero.}
         \label{higher-order-vertex-grope-example-fig}
\end{figure}

Theorem~\ref{thm:Milnor invariant} is confirmed in this case by taking
any of $I$, $J$, and $K$ to be order zero, which shows that the
corresponding factor contributed to the longitude is the iterated
commutator gotten by putting a root at that univalent vertex on
$t_p$. The general framed case follows by summing this computation over all dyadic branches.

For instance, referring to the example of Figure~\ref{higher-order-vertex-grope-example-fig} in the case $n=4$,
the contribution to the longitude $\gamma_k$ coming from the pictured intersection between $G_k$ and the cap $c_k$
is represented by the cap meridian:
$$
x_{c_k}=[x_I,\gamma_J]=[[x_{i_1},[x_{i_2},x_{i_3}]],[x_{j_1},x_{j_2}]]
$$
which is the iterated commutator $\beta_{v_k}(\langle (I,J),k\rangle)$ determined by putting a root at the $k$-labeled vertex $v_k$ of the tree
$\langle (I,J),k\rangle$.

\subsubsection{The general twisted case:}\label{subsubsec:general-twisted-case}

Now consider the general order $n$ case where $G^c$ may contain twisted caps (for even $n$)
corresponding to $\pm 1$-twisted Whitney disks (of order $n/2$) in $\cW$. Again, by additivity of the computation over the dyadic branches
it is enough to consider a single dyadic
branch of $G^c$ containing a $\pm 1$-twisted cap $c_J^\iinfty$, and check that the corresponding $\iinfty$-tree $J^\iinfty$
contributes $\eta_n(J^\iinfty)=\frac{1}{2}\eta_n(\langle J,J \rangle)$ to $\mu_n(L)$.

The key observation in this case is that because the cap
$c_J^\iinfty$ is $\pm 1$-twisted, the element $\gamma_\iinfty$ represented by a parallel push-off of the
(oriented) boundary of the cap is the ($\pm$)-meridian
$x_J^{\pm 1}$ to the cap. For $J=(J_1,J_2)$, referring to Figure~\ref{higher-order-longitude-twisted-vertex-J-fig}
and using the same dual torus as for a framed
cap (Figure~\ref{normal-torus-fig}) this element can be expressed
as the commutator: 
$$
\gamma_\iinfty=x_{(J_1,J_2)}^{\pm 1}=[x_{J_1},\gamma_{J_2}]^{\pm 1}
$$  
where if $J_2=j_2$ is order zero, then $\gamma_{J_2}$ is replaced by the meridian $x_{j_2}$ to $G_{j_2}$ 
(as in the notation for the previous untwisted case). 
\begin{figure}
\centerline{\includegraphics[width=65mm]{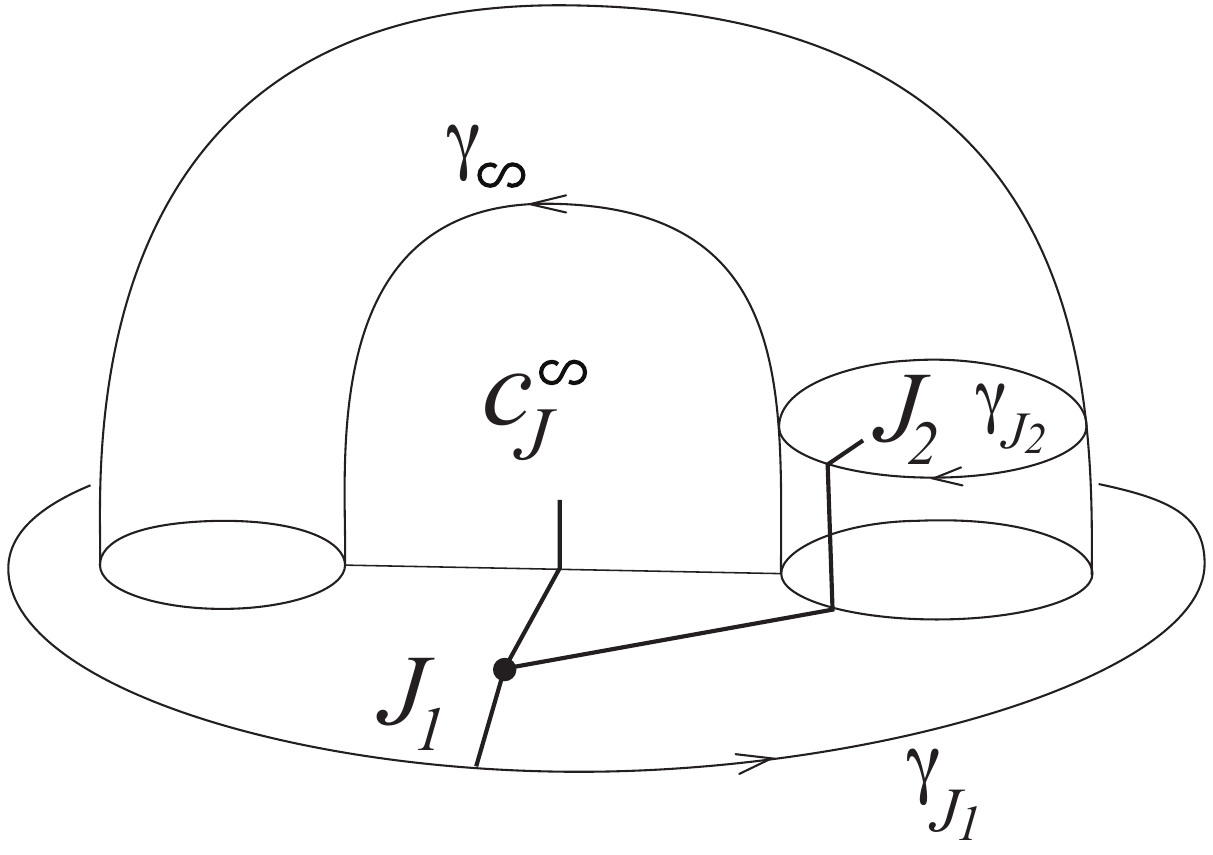}}
         \caption{Near a twisted cap in a dyadic branch of $G^c$.}
         \label{higher-order-longitude-twisted-vertex-J-fig}
\end{figure}

So the analogous computations as in Figure~\ref{higher-order-longitude-vertex-fig}
applied to the twisted setting of Figure~\ref{higher-order-longitude-twisted-vertex-J-fig}
give the relations:
$$
\gamma_{J_1}=[\gamma_{J_2},[x_{J_1},\gamma_{J_2}]]^{\pm 1}\;\;\;\;\;
\mbox{and}\;\;\;\;\;x_{J_2}=[[x_{J_1},\gamma_{J_2}],x_{J_1}]^{\pm 1}
$$
and recursively as in the framed case:
$$
\gamma_{J_1}=[J_2,[J_1,J_2]]^{\pm 1}\;\;\;\;\;
\mbox{and}\;\;\;\;\;x_{J_2}=[[J_1,J_2],J_1]^{\pm 1}
$$
with $J_1$ and $J_2$ denoting the corresponding iterated commutators in the meridional generators.  

To see that the contribution to $\gamma_{i_r}$ corresponding to any $i_r$-labeled vertex $v_{i_r}$ of $J^\iinfty$ is the iterated commutator 
$\beta_{v_{i_r}}(\langle J,J \rangle)$, 
observe that if $v_{i_r}$ is in $J_2$ then the contribution will be an iterated commutator containing $x_{J_2}$, 
and if $v_{i_r}$ is in $J_1$ then the contribution will be the iterated commutator containing
$\gamma_{J_1}$.  Thus, the effect of the twisted cap is to ``reflect'' the iterated commutator determined by $J$ at the $\iinfty$-labeled root.
For instance, in the 
example of Figure~\ref{twisted-vertex-grope-example-fig}
for the case $n=8$, the contribution to the longitude $\gamma_{j_1}$ corresponding to the boundary of the dyadic branch
is: 
$$
\begin{array}{ccl}
[[x_{j_2},x_{j_3}],\gamma_{J_1}] & = & [[x_{j_2},x_{j_3}],[\gamma_{J_2},\gamma_\iinfty]] \\
                                                      & = & [[x_{j_2},x_{j_3}],[[x_{j_4},x_{j_5}],x_{c_J}]]  \\
                                                       & = & [[x_{j_2},x_{j_3}],[[x_{j_4},x_{j_5}],[x_{J_1},\gamma_{J_2}]]] \\
                                                       & = & [[x_{j_2},x_{j_3}],[[x_{j_4},x_{j_5}],[[x_{j_1},[x_{j_2},x_{j_3}]],[x_{j_4},x_{j_5}]]]]\\
                                                       & = & \beta_{v_{j_1}}(\langle J,J \rangle)
\end{array}
$$
for $J=(J_1,J_2)=((j_1,(j_2,j_3)),(j_4,j_5))$ and assuming the twisting of $c_J^\iinfty$ is $+1$.

Since each univalent vertex of $J$ contributes one term to $\mu_n(L)$,
the total contribution of the branch is equal to $\eta_n(J^\iinfty)=\frac{1}{2}\eta_n(\langle J,J \rangle)$.

\begin{figure}
\centerline{\includegraphics[width=65mm]{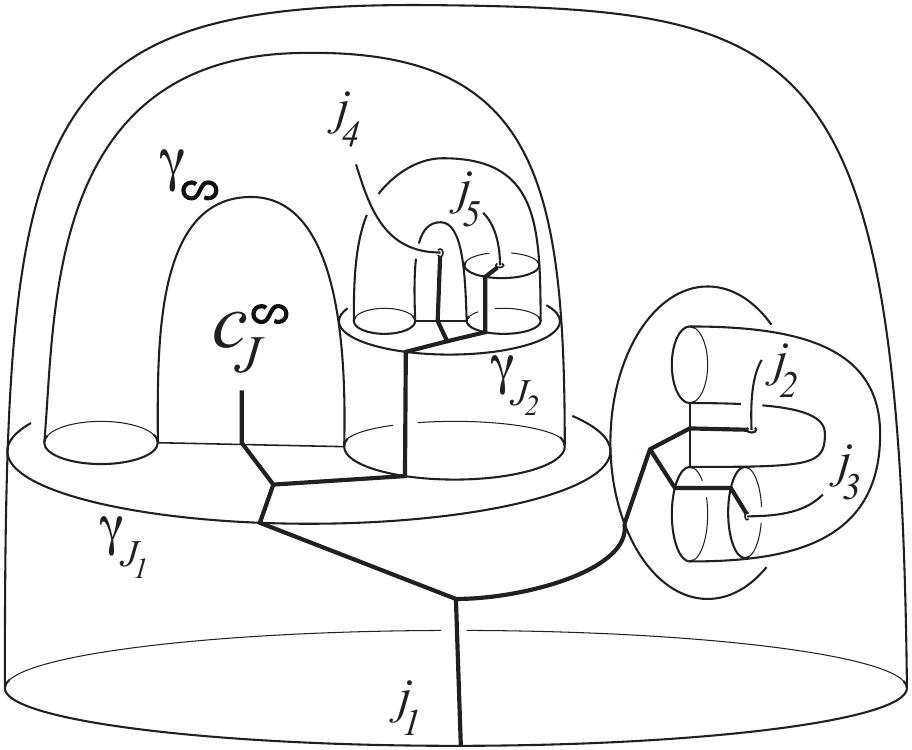}}
         \caption{An example of Figure~\ref{higher-order-longitude-twisted-vertex-J-fig} with $J_1=(j_1,(j_2,j_3))$ of order $2$,
         and $J_2=(j_4,j_5)$ of order $1$. }
         \label{twisted-vertex-grope-example-fig}
\end{figure}

This completes the proof of Theorem~\ref{thm:Milnor invariant}, modulo the proofs of Lemma~\ref{lem:eta-well-defined} and Lemma~\ref{lem:grope-duality} which follow.

\subsection{Proof of Lemma~\ref{lem:eta-well-defined}}\label{subsec:lemma-eta-well-defined-proof}
Lemma~\ref{lem:eta-well-defined} states that $\eta_n:\cT^\iinfty\to\sD_n$ is a well-defined surjection.
Levine showed in \cite{L2} that an analogous map $\eta'_n:\cT_n\to \sD'_n:=\Ker\{\sL'_1 \otimes \sL'_{n+1}\to\sL'_{n+2}\}$ is a well-defined surjection,
where $\eta_n'$ is defined on trees using the same ``sum over all choices of root'' formula as $\eta_n$,
and $\sL'=\oplus_{n\in\N}\sL_n'$ is the free \emph{quasi Lie algebra} on $\{X_1,X_2,\ldots,X_m\}$ gotten from the free $\Z$-Lie algebra $\sL$ by replacing the self-annihlation
relations $[X,X]=0\in\sL$ with the antisymmetry AS relations $[X,Y]+[Y,X]=0\in\sL'$. (See also \cite{CST3}, where we prove the \emph{Levine Conjecture}: that $\eta'_n$ is an isomorphism for all $n$.) It follows that $\eta_n$ vanishes on the usual IHX and AS relations, and maps onto $\sD_n$. So it suffices to check that $\eta_n$ respects the other 
relations in $\cT^\iinfty_n$.

First consider the odd order case. To see that $\eta_n$ vanishes on the boundary-twist relations, observe that 
$\eta_n(\langle (i,J),J\rangle)=0$, since placing a root at the \mbox{$i$-labeled} vertex determines a trivial symmetric bracket in $\sL_{n+1}$,
and all the other Lie brackets come in canceling pairs corresponding to putting roots on vertices in each of the isomorphic
$J$ sub-trees.

Now considering the even order case, we have
$$
\eta_n((-J)^\iinfty)=\frac{1}{2}\eta_n(\langle -J,-J\rangle)=\frac{1}{2}\eta_n(\langle J,J\rangle)=\eta_n(J^\iinfty),
$$
and
$$
\eta_n(2J^\iinfty)=2\cdot\eta_n(J^\iinfty)=2\cdot\frac{1}{2}\eta_n(\langle J,J\rangle)=\eta_n(\langle J,J\rangle).
$$
And for $I$, $H$, and $X$, the terms in a Jacobi relation $I=H-X$, we have
$$
\begin{array}{ccl}
\eta_n(I^\iinfty)  & =  & \frac{1}{2}\eta_n(\langle I,I\rangle)  \\
  & =  & \frac{1}{2}\eta_n(\langle H-X,H-X\rangle)  \\
  & =  &  \frac{1}{2}(\eta_n(\langle H,H\rangle)-2\cdot\eta_n(\langle H,X\rangle)+\eta_n(\langle X,X\rangle))\\
  & = & \eta_n(H^\iinfty+X^\iinfty-\langle H, X\rangle )
\end{array}
$$
where the second equality comes from applying the Jacobi relation to the sub-trees $I$ inside the inner product $\langle I,I\rangle$ and expanding.\hfill$\square$

\subsection{Proof of Lemma~\ref{lem:grope-duality}}\label{subsec:grope-duality-lemma-proof}
By Dwyer's theorem \cite{Dw}, it suffices to show that the inclusion $(S^3 \setminus L)\hookrightarrow (B^4\setminus G^c)$
induces an isomorphism on first homology, and that the relative (integral) second homology group $H_2(B^4\setminus G^c,S^3 \setminus L)$
is generated by maps of closed gropes of class at least $n+2$ (where a grope is \emph{closed} if its bottom stage is compact with empty boundary).

Observe first that $H_1(S^3 \setminus L)$ is Alexander dual to $H^1(L)$ and is hence generated by meridians. Similarly,  $H_1(B^4\setminus G^c)$ is generated by meridians to the bottom stages of the grope. It follows that the inclusion induces an isomorphism on $H_1$, since meridians of the link go to meridians of the bottom
stages.

By Alexander duality, the generators of $H_2(B^4\setminus G^c)$ which don't come from
the boundary are the Clifford tori (or ``linking tori'', see e.g.~1.1, 2.1 of \cite{FQ}) around the intersections
between the caps and the bottom stages of $G^c$. Each such Clifford torus contains a pair of dual circles,
one a meridian $x_k$ to the $k$th bottom stage of $G$ and the other a meridian $x_{c_k}$ to the cap $c_k$. 
Referring to
Figure~\ref{order-one-longitude-A1-fig} and Figure~\ref{normal-torus-fig} (with $i$ and $j$ replaced by $I$ and $J$,
respectively),
the cap meridian $x_{c_k}$ bounds a genus one surface $T_{c_k}$ containing a pair of dual circles, one a meridian 
$x_I$ to the
$I$-stage of $G^c$ containing $\partial c_k$, and the other a parallel push-off of the boundary of
the $J$-stage representing  $\gamma_J$ which is dual to $c_k$ (if $c_k$ is dual to another cap $c_j$, then $\gamma_J$ is just a meridian $x_j$
to the $j$th bottom stage of $G^c$).

Consider first the case where the dyadic branch of $G^c$ containing $c_k$ does not contain a twisted cap, and let
$t=\langle k,(I,J) \rangle\in t(G^c)$ be the corresponding order $n$ tree. Applying the grope duality construction of section 4 in
\cite{KT} to $T_{c_k}$ yields a class $n+1$ grope in $B^4\setminus G$ having $T_{c_k}$ as a bottom stage and associated tree $\langle k,(I,J) \rangle$ 
(with the $k$-labeled univalent vertex corresponding to $T_{c_k}$). 
Since this class $n+1$ grope consists of normal tori and parallel push-offs of higher stages
of $G$ it actually lies in the complement of the caps of $G^c$ (which only intersect the bottom stages of $G$).
The union of this class $n+1$ grope with the Clifford torus is a class $n+2$ closed grope 
with associated order $n+1$ rooted tree $(k,(I,J))$ (with the root corresponding to the Clifford torus), completing the proof in the case where $G^c$ has no twisted caps.

Now consider the case where the dyadic branch of $G^c$ containing $c_k$ does contain a twisted cap 
$c^\iinfty_J$,
with associated $\iinfty$-tree $J^\iinfty$. 
Recall the observation of Section~\ref{subsubsec:general-twisted-case} above that a normal push-off 
of the cap boundary $\partial c^\iinfty_J$ representing $\gamma_\iinfty\in\pi_1(B^4\setminus G^c)$ is a meridian $x_J$ to $c^\iinfty_J$. 
In this case, the grope duality construction of the previous paragraph which builds a grope on $T_{c_k}$
will at some step look for a subgrope bounded by a normal push-off 
of $\partial c^\iinfty_J$. Just as the computations 
in Section~\ref{subsec:computing-the-longitudes} show that $x_J$ represents the iterated commutator in $\pi_1(B^4\setminus G^c)$ corresponding to the rooted tree $J$, the punctured dual torus to $c^\iinfty_J$ bounded by $x_J$ extends to a grope in $B^4\setminus G^c$ with tree $J$.  
Thus the torus $T_{c_k}$ extends to (a map of) a grope in $B^4\setminus G^c$ whose associated tree is gotten by putting a root at 
the corresponding $k$-labeled univalent vertex of (either one of the sub-trees) $J$ in $\langle J,J \rangle$.
It follows that the  
Clifford torus near the cap $c_k$ extends to a grope whose corresponding tree is gotten by inserting
a (rooted) edge into the edge of $\langle J,J \rangle$ adjacent to the $k$-labeled univalent vertex.
Since the order of $\langle J,J \rangle$ is $n$, it follows that
the class of the grope containing the Clifford torus as a bottom stage is $n+2$.$\hfill\square$


\section{The order $2$ twisted intersection invariant and the classical Arf invariant}\label{sec:proof-lem-Arf}
This section contains a proof of Lemma~\ref{lem:Arf} from the introduction.

Recall the statements of Lemma~\ref{lem:Arf}: Any knot $K$ bounds a twisted Whitney tower $\cW$ of order $2$ and the classical Arf invariant of $K$ can be identified with the intersection invariant
$ \tau^\iinfty_2(\cW) \in \cT^\iinfty_2(1) \cong \Z_2$;
and more generally, the classical Arf invariants of the components of an $m$-component link give an isomorphism
$\Arf: \Ker(\mu_2:\W^\iinfty_2\sra \sD_2) \overset{\cong}{\to} (\Z_2\otimes \sL_1) \cong (\Z_2)^m$.

\begin{proof}
Starting with the first statement, observe that any knot $K\subset S^3$ bounds an immersed disk $D \looparrowright B^4$, and by performing cusp homotopies as needed it can be
arranged that all self-intersections of $D$ come in canceling pairs admitting order $1$ Whitney disks. These Whitney disks can be made to have disjointly embedded boundaries
by a regular homotopy applied to Whitney disk collars (Figure~3 in \cite{ST1}).
It is known that the sum modulo $2$ of the number of intersections between $D$ and the Whitney disk interiors together with the framing obstructions on all the Whitney disks
is equal to $\Arf (K)$ (see \cite{FK,FQ,Ma} and sketch just below).
By performing boundary-twists on the Whitney disks as in Figure~\ref{boundary-twist-and-section-fig} (each of which changes a framing obstruction by 
$\pm 1$), it can be arranged that all intersections between $D$ and the Whitney disk interiors 
come in canceling pairs. This means that $\Arf (K)$ is now equal to the sum modulo $2$ of the twistings on all the order $1$ Whitney disks, and that all order $1$ intersections can be paired by
order $2$ Whitney disks. So $K$ bounds an order $2$ twisted Whitney tower $\cW$ with $\Arf (K)=\tau^\iinfty_2(\cW)$
which counts the $(1,1)^\iinfty$ in
$\cT^\iinfty_2(1)\cong\Z_2$.
On the other hand, given an arbitrary order $2$ twisted Whitney tower $\cW$ bounded by $K$, one has  $\Arf (K)=\tau^\iinfty_2(\cW)\in\cT^\iinfty_2(1)\cong\Z_2$ determined again as the sum modulo $2$ of twistings on all order $1$ Whitney disks.

\begin{figure}
\centerline{\includegraphics[width=130mm]{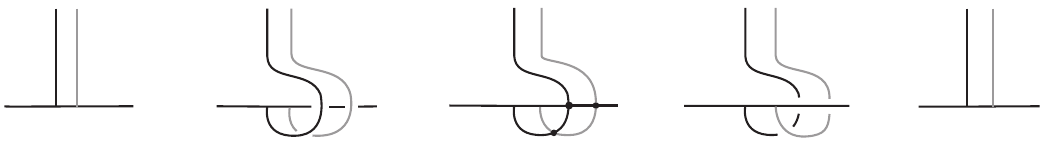}}
         \caption{Boundary-twisting a Whitney disk $W$ changes $\omega (W)$ by $\pm 1$ and creates an intersection point with one of the sheets paired by $W$. The horizontal arcs trace out part of the sheet, the dark non-horizontal arcs
         trace out the newly twisted part of a collar of $W$, and the grey arcs indicate part of the Whitney section over $W$. The bottom-most
         intersection in the middle picture corresponds to the $\pm 1$-twisting created  by the move.}
         \label{boundary-twist-and-section-fig}
\end{figure}

We sketch here a proof that $\Arf (K)$ is equal to the sum modulo $2$ of the order $1$ intersections plus framing obstructions in any  \emph{weak} order $1$ Whitney tower $\cW\subset B^4$ bounded by $K\subset S^3$. Here ``weak'' means that the Whitney disks are not necessarily framed. (We are assuming that the Whitney disk boundaries are disjointly embedded, although we could instead also count Whitney disks boundary singularities.)  Any $K$ bounds a Seifert surface $F\subset S^3$, and by definition
$\Arf (K)$ equals the sum modulo $2$ of the products of twistings on dual pairs of $1$-handles of $F$. Restricting to the case where $F$ is genus $1$, denote by $\gamma$ and $\gamma'$ core circles of the pair of dual $1$-handles of $F$, with respective twistings $a$ and $a'$, so that $\Arf(K)$ is the product $aa'$ modulo $2$. Let $D_\gamma$ be any immersed disk bounded by $\gamma$ into $B^4$, so that the interior of $D_\gamma$ is disjoint from $F$. After performing $|a|$ boundary-twists on $D_\gamma$, each of which creates a single intersection between $D_\gamma$ and $F$, it can be arranged that $D_\gamma$ is framed with respect to $F$, so that surgering $F$ along $D_\gamma$ creates only canceling pairs of self-intersections in the resulting disk $D$ bounded by $K$. Each self-intersection in $D_\gamma$ before the surgery contributes \emph{two} canceling pairs of self-intersections of $D$, since the surgery adds both $D_\gamma$ and an oppositely oriented parallel copy of $D_\gamma$ to create $D$. On the other hand, the $|a|$ intersections between $D_\gamma$ and $F$ before the surgery give rise to exactly $|a|$ canceling pairs of self-intersections of $D$, so the total number of canceling pairs of self-intersections of $D$ is equal to $a$ modulo $2$. Observe that all of these canceling pairs admit Whitney disks constructed from parallel copies of any immersed disk bounded by $\gamma'$ with interior in $B^4$.  The framing obstruction on each of these Whitney disks is equal to the twisting $a'$ along $\gamma'$, and the only order $1$ intersections between the Whitney disk interiors and $D$ come in canceling pairs, since they correspond to intersections with $D_\gamma$ and its parallel copy. Thus the sum of framing obstructions and order $1$ intersections is equal to the product $aa'$ modulo $2$. The higher genus case is similar. That this construction is independent of the choice of weak Whitney tower follows from the fact that the analogous homotopy invariant for $2$--spheres in $4$--manifolds vanishes on any immersed $2$--sphere in the $4$--sphere (e.g.~\cite{ST1}, or \cite{FQ} 10.8A and 10.8B).  

Considering now the second statement of Lemma~\ref{lem:Arf} regarding links, it follows from Corollary~\ref{cor:Milnor invariants} and Proposition~\ref{prop:kerEta4k-2} that if $L$ is any link in $\Ker(\mu_2)<\W_2^\iinfty$, and $\cW$ is any order $2$ 
twisted Whitney tower bounded by $L$; then $\tau^\iinfty_2(\cW)$ is contained in the subgroup of $\cT_2^\iinfty$  spanned by the symmetric twisted trees $(i,i)^\iinfty$, and this subgroup is isomorphic to $(\Z_2)^m$. By the first statement of Lemma~\ref{lem:Arf}, the desired isomorphism $\Arf(L)$ is given by $\tau^\iinfty_2(\cW)$.

So to finish the proof of Lemma~\ref{lem:Arf} it suffices to show that for any $L\in\Ker(\mu_2)<\W_2^\iinfty$, $L=0\in\W_2^\iinfty$ if and only if 
$\tau^\iinfty_2(\cW)=0$. But if $L=0\in\W_2^\iinfty$, then by definition $L$ bounds an order $3$ twisted Whitney tower, so
$\tau^\iinfty_2(\cW)=0$.  And if $\tau^\iinfty_2(\cW)=0$, then $L$ bounds an order $3$ twisted Whitney tower by Theorem~\ref{thm:twisted-order-raising-on-A}, hence $L=0\in\W_2^\iinfty$.
\end{proof}


\section{Boundary links and higher-order Arf invariants}\label{sec:Bing-Arf-proof}
This section contains proofs of Lemma~\ref{lem:Bing} and Proposition~\ref{prop:Arf-2-and-greater}
from the introduction.

\subsection{Proof of Lemma~\ref{lem:Bing}}\label{subsec:proof-lemma-Bing}
Recall the statement of Lemma~\ref{lem:Bing}: 
For any rooted tree $J$ of order $k-1$,
by performing iterated untwisted Bing-doublings and interior band sums on the figure-eight knot $K$ one can create a boundary link $K^J$ as the boundary of a twisted Whitney tower $\cW$ of order $4k-2$ with
$\tau^\iinfty_{4k-2}(\cW)= \iinfty\!\!-\!\!\!\!\!-\!\!\!<^{\,\,J}_{\,\,J}$.

\begin{figure}
\centerline{\includegraphics[width=135mm]{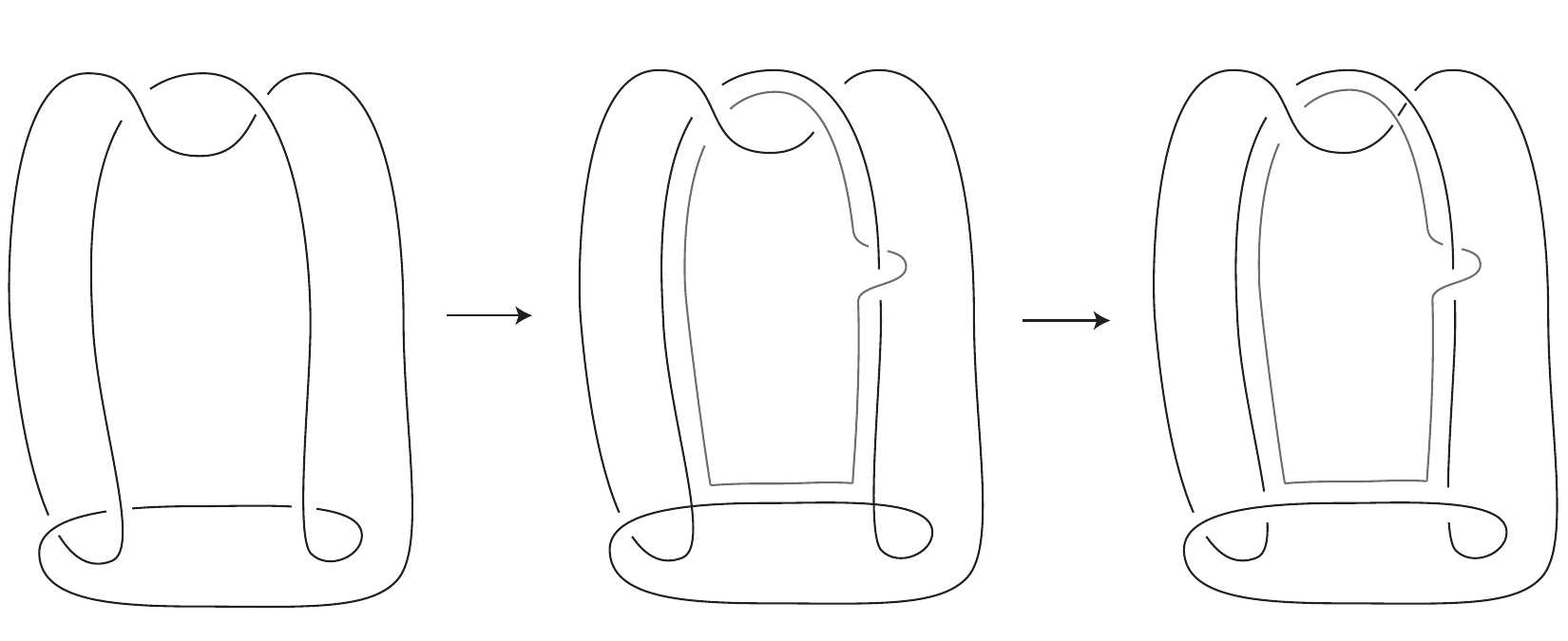}}
         \caption{From left to right: The trace of a null-homotopy of the figure-eight knot describes an order $0$ disk $D$ with a canceling pair of self-intersections that are paired by a clean $+1$-twisted Whitney disk $W$. A collar of $W$ is indicated by the grey loop in the middle diagram, and the unlink in the right hand diagram can be capped off by two embedded disks which form the rest of $D$ and $W$. The twisting $\omega (W)=1$ of $W$ corresponds to the twist in the collar annulus in the middle diagram, as explained in Figure~\ref{Fig8knot-Wdisk-Whitney-section-fig}.}
         \label{Fig8knot-and-twisted-Wdisk-fig}
\end{figure}
\begin{figure}
\centerline{\includegraphics[width=95mm]{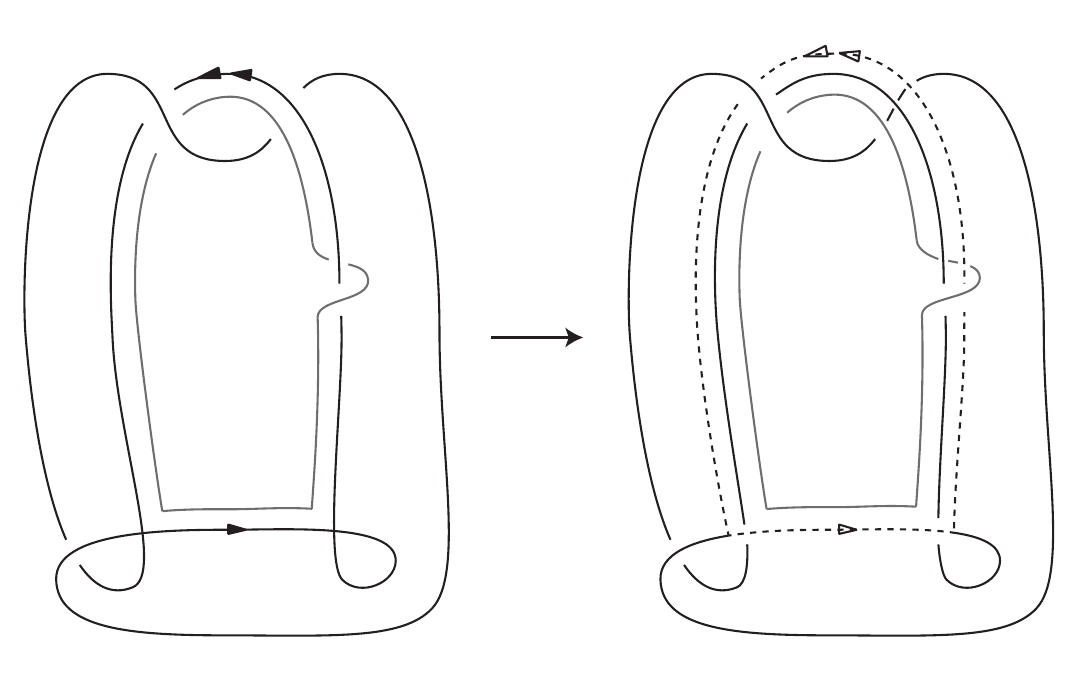}}
         \caption{Shown are the two right-most pictures from Figure~\ref{Fig8knot-and-twisted-Wdisk-fig} with the Whitney disk boundary arcs indicated by the arrows on the left, and the corresponding arcs of the Whitney section shown by the dotted arcs on the right. 
(Compare with the framed Whitney disk in Figure~\ref{Framing-of-Wdisk-fig}.) The $+1$-linking between the grey circle (boundary of a collar of $W$) and the dotted circle (Whitney section) corresponds to the twisting 
$\omega(W)=1$. }
         \label{Fig8knot-Wdisk-Whitney-section-fig}
\end{figure}

First of all, Figures~\ref{Fig8knot-and-twisted-Wdisk-fig} and \ref{Fig8knot-Wdisk-Whitney-section-fig} show that the figure-eight knot $K$ bounds an order $2$ twisted Whitney tower consisting of an order $0$ disk $D$ which contains a single canceling pair of self-intersections, and a clean $+1$-twisted Whitney disk $W$ pairing the self-intersections of $D$. (This exhibits the fact that $K$ has non-trivial classical Arf invariant, as in Lemma~\ref{lem:Arf}; and the same computation shows that the Whitney disk in Figure~\ref{fig:Whitehead-12infty-tower} is $+1$-twisted.)

The link $\Bing (K)$ pictured in the left hand side of Figure~\ref{BingFig8knot-and-banded-sheets-fig} is the untwisted Bing-double of the figure-eight knot, where ``untwisted'' refers to the untwisted band with core $K$ used to guide the construction of the two clasped unknotted components. The right-hand side of 
Figure~\ref{BingFig8knot-and-banded-sheets-fig} shows how
the untwisted Bing-double of any boundary link is again a boundary link, as disjoint Seifert surfaces for the new pair of components can be constructed by banding together four parallel copies of the Seifert surface for the original component which was doubled.

Moving into $B^4$, a null-homotopy of 
$\Bing(K)$ which pulls apart the clasps (and is supported near the untwisted band) describes embedded order $0$ disks $D_1$ and $D_2$ which have a single canceling pair of intersections (corresponding to the crossing-changes undoing the clasps).  The boundary of an order $1$ Whitney disk $W_{(1,2)}$ pairing $D_1\cap D_2$ sits in an $S^3$-slice of $B^4$ as a figure-eight knot, exactly as in the left-hand side of Figure~\ref{Fig8knot-and-twisted-Wdisk-fig}, with the rest of
$W_{(1,2)}$ described by the same null-homotopy as for $D$ of Figure~\ref{Fig8knot-and-twisted-Wdisk-fig}. Since $\Bing(K)$ is untwisted,
$W_{(1,2)}$ is framed \cite[Sec.3]{WTCCL}. The interior of $W_{(1,2)}$ is disjoint from $D_1$ and $D_2$, but contains a canceling pair of self-intersections corresponding to the self-intersections of $D$ in Figure~\ref{Fig8knot-and-twisted-Wdisk-fig}. These self-intersections can be paired by an order $3$ clean $+1$-twisted Whitney disk (the $W$ in Figure~\ref{Fig8knot-and-twisted-Wdisk-fig}) which has $\iinfty$-tree $((1,2),(1,2))^\iinfty$. Thus, $K^J:=\Bing(K)$ for $J=(1,2)$ bounds an order $6$ twisted Whitney tower $\cW$ with
$\tau_6^\iinfty(\cW)=((1,2),(1,2))^\iinfty\in\cT^\iinfty_6$. As mentioned in the introduction, Conjecture~\ref{conj:Arf-k} would imply that
the $\Bing(K)$ does \emph{not} bound any order $7$ (twisted) Whitney tower $\cW'$ on immersed disks in the $4$--ball.
Note that if $\Bing(K)$ did indeed bound such a $\cW'$, then taking the union of $\cW$ and $\cW'$ along $\Bing(K)$ would yield
a pair of immersed $2$--spheres in the $4$--sphere supporting the order $6$ twisted Whitney tower $\cV$ with $\tau_6(\cV)=((1,2),(1,2))^\iinfty$.
This would imply that $\Arf_2(L)=0$ for any link $L$ since by tubing these $2$--spheres as needed into any order $6$ twisted Whitney tower bounded by 
$L$ one could kill any $2$-torsion elements  $((i,j),(i,j))^\iinfty\in\cT^\iinfty_6$.

\begin{figure}
\centerline{\includegraphics[width=115mm]{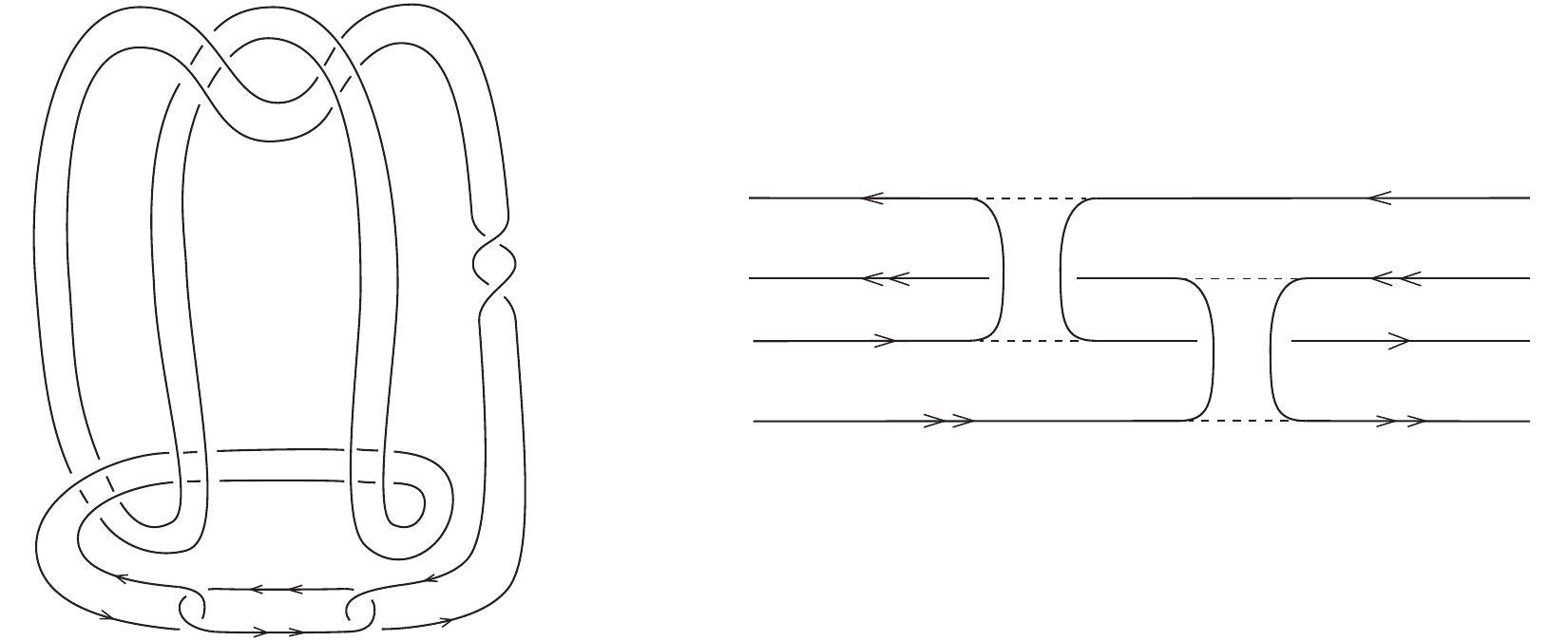}}
         \caption{On the left, the (untwisted) Bing-double of the figure-eight knot. On the right, disjoint Seifert surfaces for the two components of a Bing-doubled knot can be constructed from four copies of a Seifert surface bounded by the original knot.}
         \label{BingFig8knot-and-banded-sheets-fig}
\end{figure}

The construction for arbitrary $J$ follows inductively by observing that having realized
$(J,J)^\iinfty$ by $K^J$, with $J$ of order $r$, Bing-doubling a component of $K^J$ realizes  $(J',J')^\iinfty$,
with $J'$ of order $r+1$ gotten from $J$ by adding two new edges to a univalent vertex of $J$ (see Figure~\ref{Bing-W-disk-in-collar-fig}); and any $J'$ of order $r+1$ can be gotten from some such $J$, by banding together some link components as necessary to get repeated univalent labels. For instance, to realize $((1,(1,2)),(1,(1,2)))^\iinfty$ from $((1,2),(1,2))^\iinfty$ realized by
$\Bing(K)$ above, just Bing-double the second component of $\Bing(K)$ and then band one of the new components into the old first component. 
$\hfill\square$

\subsection{Proof of Proposition~\ref{prop:Arf-2-and-greater}} \label{subsec:proof-prop-Arf-2-and-greater}
As stated in Conjecture~\ref{conj:Arf-k}, we believe that $\Arf_k$ is non-trivial for all $k$; however interest in the first unknown ``test case''  $k=2$ is heightened by Proposition~\ref{prop:Arf-2-and-greater} from the introduction which states that if $\Arf_2=0$ then $\Arf_k$ is trivial for all $k\geq 2$.

\begin{figure}
\centerline{\includegraphics[scale=.375]{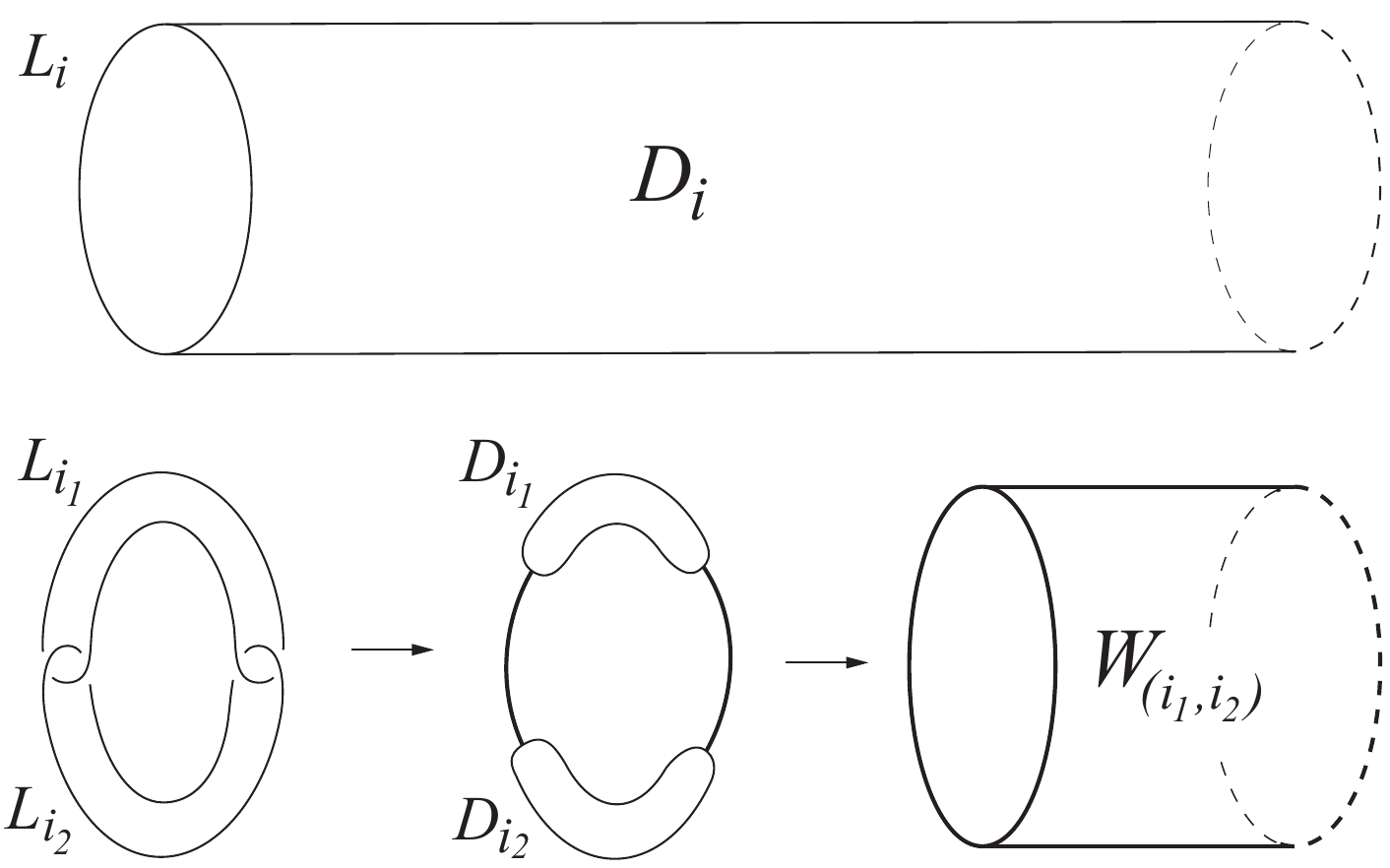}}
         \caption{Moving into $B^4$ from left to right: Above, a collar of $L_i$ in $D_i$.
         Below, $D_i$ yields a Whitney disk $W_{(i_1,i_2)}$ for the intersections between $D_{i_1}$ and $D_{i_2}$ bounded by $L_{i_1}$ and $L_{i_2}$, the untwisted Bing-double of $L_i$. $D_{i_1}$ and $D_{i_2}$ are traced out by null-homotopies of $L_{i_1}$ and $L_{i_2}$; and the curved vertical arcs are part of $W_{(i_1,i_2)}$.}
         \label{Bing-W-disk-in-collar-fig}
\end{figure}

By Proposition~\ref{prop:kerEta4k-2} it suffices to show that if the untwisted Bing-double $\Bing(K)=K^{((1,2),(1,2))}$ bounds an order $7$ twisted Whitney tower for some $K$ with non-trivial classical Arf invariant, then each link $K^J$ of Lemma~\ref{lem:Bing} with $J$ of order $k-1$ bounds an order $4k-1$ twisted Whitney tower, for $k>2$.

Note that the assumption that $\Bing(K)$ bounds an order $7$ twisted Whitney tower implies that
$\Bing(K)$ in fact bounds an order $10$ twisted Whitney tower $\cW$ by Theorem~\ref{thm:twisted-three-quarters-classification}, since boundary links have vanishing Milnor invariants in all orders. 
Now applying the Bing-doubling and banding construction of the proof of Lemma~\ref{lem:Bing} to get $K^{J'}$ from $\Bing(K)$, where $J'$ is any 
order $2$ tree gotten from the order $1$ tree $J=(1,2)$,  
yields $K^{J'}$ bounding an order $11$ twisted Whitney tower gotten from $\cW$
by converting an order $0$ disk of $\cW$ into an order $1$ Whitney disk (Figure~\ref{Bing-W-disk-in-collar-fig}). Inductively, if $K^J$, with $J$ of order $k-1$, bounds an order $4k-1$ twisted Whitney tower, then $K^J$ also bounds an order $4k+2$ twisted Whitney tower by Theorem~\ref{thm:twisted-three-quarters-classification}, and since Bing-doubling a component of $K^J$ raises the order by at least $1$ it follows that $K^{J'}$ bounds an order $4(k+1)-1$ twisted Whitney tower,
for any $J'$ gotten from $J$ by attaching at least one pair of new edges to a univalent vertex of $J$. 
As was observed in the proof of Lemma~\ref{lem:Bing}, 
all trees can be gotten by this process of adding new pairs of edges to univalent vertices of lower-order trees.
 $\hfill\square$


\section{Milnor invariants and geometric $k$-sliceness}\label{sec:Milnor-geo-k-slice}
This section gives proofs of Theorem~\ref{thm:geo-k-slice-equals-odd-W} and Theorem~\ref{thm:mega-k-slice} from the introduction. The proof of Theorem~\ref{thm:geo-k-slice-equals-odd-W} uses the classification of the twisted Whitney tower filtration from \cite{WTCCL}, together with the Whitney tower-to-grope techniques of \cite{S2} (as sketched in Section~\ref{section:twisted-towers-and-gropes} above). The proof of Theorem~\ref{thm:mega-k-slice} will use Theorem~\ref{thm:geo-k-slice-after-sums} of the introduction, together with a mild generalization of Theorem~\ref{thm:Milnor invariant} given by Proposition~\ref{prop:mu=tau-on-immersed-surfaces} below.
 
 \subsection{Proof of Theorem~\ref{thm:geo-k-slice-equals-odd-W}}
Recall from Section~\ref{subsec:intro-k-slice} that Theorem~\ref{thm:geo-k-slice-equals-odd-W} states: a link $L$ is geometrically $k$-slice if and only if $\mu_n(L)=0$ for all $n\leq 2k-2$ and $\Arf_n(L)=0$ for all  $n\leq \frac{1}{2}k$; where $L\subset S^3$ is \emph{geometrically $k$-slice} if the components $L_i$ bound disjointly embedded (oriented) surfaces $\Sigma_i\subset B^4$ such that a symplectic basis of curves on each $\Sigma_i$
bound disjointly embedded framed class $k$ gropes in the complement of $\Sigma := \cup_i\Sigma_i$.

\begin{proof}
By the classification of the twisted Whitney tower filtration
\cite[Cor.1.16]{WTCCL}, the stated vanishing of $\mu_n(L)$ and $\Arf_n(L)$ 
is equivalent to $L$ bounding an order $2k-1$ twisted Whitney tower in $B^4$ (see Theorem~\ref{thm:twisted-three-quarters-classification}, Proposition~\ref{prop:kerEta4k-2} and Definition~\ref{def:Arf-k} above).
So it will suffice to show that $L$ is in $\mathbb W^\iinfty_{2k-1}$ if and only if $L$ is geometrically $k$-slice.

Recall that $\mathbb W^\iinfty_{2k-1}=\mathbb W_{2k-1}$ by definition, so we may assume that $L$ bounds a framed Whitney tower $\cW$ of order $2k-1$. By applications of the Whitney-move IHX construction
 (Section~7 of \cite{S1}) it can be arranged that all trees in the intersection forest $t(\cW)$ are \emph{simple}, meaning that
 every trivalent vertex is adjacent to at least one univalent edge. Since all these simple trees are of order (at least) $2k-1$ we can choose a preferred univalent vertex on each tree which is (at least) $k-1$ trivalent vertices away from both ends of its tree.
Now converting the order $2k-1$ Whitney tower $\cW$ to a class $2k$ embedded grope $G$ via (the framed part of) the above construction in the proof of Lemma~\ref{lem:twisted-tower-to-grope} (as described in detail in \cite{S1}), with the preferred univalent vertices corresponding to the bottom stages of the connected components of $G$, yields dyadic branches having bottom stages with a symplectic basis of circles bounding gropes of class (at least) $k$ (each tree yields one hyperbolic pair of such circles).

Note that the construction of \cite{S1} used here, as in the proof of Lemma~\ref{lem:twisted-tower-to-grope}, yields a \emph{capped} grope $G^c$ which is contained in any small neighborhood of $\cW$. In this argument we only need the body $G$.

On the other hand, being geometrically $k$-slice is the same as bounding a particular kind of embedded class $2k$ grope $G\subset B^4$. Since $B^4$ is simply connected, caps can be found, and can be framed by twisting as necessary. All intersections in the caps can be pushed down into the bottom grope stages using finger moves, yielding a capped grope $G^c$ bounded by the link, which can be converted to an order $2k-1$ Whitney tower via the inverse operation to that used in the proof of Lemma~\ref{lem:twisted-tower-to-grope} above (see Theorem~6 of \cite{S1}).
\end{proof}

 \subsection{Proof of Theorem~\ref{thm:mega-k-slice}}\label{subsec:proof-cor-mega-k-slice}
Recall the statement of Theorem~\ref{thm:mega-k-slice}:
A link $L=\cup_i L_i$ has $\mu_n(L)=0$ for all $n\leq 2k-2$
if and only if the link components $L_i$ bound
disjointly embedded surfaces $\Sigma_i$ in the $4$--ball, with each surface a
connected sum of two surfaces $\Sigma'_i$ and $\Sigma''_i$ such that
 a symplectic basis of curves on $\Sigma'_i$
 bound disjointly embedded framed gropes $G_{i,j}$ of class $k$ in the complement of 
 $\Sigma := \cup_i\Sigma_i$, and 
 a symplectic basis of curves on $\Sigma''_i$ bound immersed disks in the
 complement of 
 $\Sigma\cup G$, where $G$ is the union of all $G_{i,j}$.

 \begin{proof}
Given $L$ with vanishing Milnor invariants of all orders $\leq 2k-2$, by Theorem~\ref{thm:geo-k-slice-after-sums} there exist finitely many boundary
links as in Lemma~\ref{lem:Bing} such that taking band sums of $L$ with all these boundary links yields a geometrically $k$-slice link $L'\subset S^3$. Consider each of these boundary links to be contained in a $3$--ball, and embed these $3$--balls disjointly in a single $3$--sphere, so the union of the boundary links forms a single boundary link denoted $U$. Decompose the $3$--sphere $S^3=B^3_L\cup_{S_0^2}B^3_U$ containing $L'$ into two $3$--balls exhibiting the band sum of 
$L'=L\#U$, with $L\subset B^3_L$, and $U\subset B^3_U$. Each band in the sum intersects the separating $2$--sphere $S_0^2$ in a single transverse arc. Since $L'$ is geometrically $k$-slice, $L'$ bounds $\Sigma'\subset B^4$ which satisfies the conditions in the first item of Theorem~\ref{thm:mega-k-slice}.

Now consider $S^3=B^3_L\cup_{S_0^2}B^3_U$ as the equator of a $4$--sphere $S^4$, with the interior of $\Sigma'\subset B^4$ contained in the `southern hemisphere' $B^4\subset S^4$. The components of the boundary link $U$ bound disjoint
Seifert surfaces which are contained in $B^3_U$, and symplectic bases of these Seifert surfaces bound immersed disks into the `northern hemisphere' $4$--ball in $S^4$. We may assume that the interiors of these immersed disks are contained in a `northern quadrant' of $S^4$, which is a $4$--ball $B^4_+$ bounded by
a $3$--sphere consisting of $B^3_+\cup_{S_0^2}B^3_U$, where $B^3_+$ is a $3$--ball bounded by $S^2_0$ whose interior cuts the northern hemisphere into two $4$--balls.
Gluing $B^4_+$ to the southern hemisphere $B^4$ along $B^3_U$, with the boundaries of the Seifert surfaces glued along $U$, has the effect of eliminating $U$ from the band sum with $L$ ($U$ gets replaced by the unlink). This leaves $L$ in the $3$--sphere $B^3_+\cup_{S_0^2}B^3_+$ which bounds the `other' northern quadrant of $S^4$, and the union $\Sigma''$ of the Seifert surfaces       
together with $\Sigma'$ form the surfaces $\Sigma$ as desired.

Conversely, suppose the components $L_i$ of $L$ bound disjointly embedded surfaces $\Sigma_i\subset B^4$ as in the statement. The class $k$ gropes $G_{i,j}$ attached to dual circles in $\Sigma_i'$ can be thought of as grope branches of class $2k$ by subdividing each $\Sigma_i'$ into genus one pieces. 
Caps can be chosen for all tips of these branches, and by pushing down intersections (using finger moves) it can be arranged that the caps only intersect the bottom stages $\Sigma_i'$. This means that the caps are disjointly embedded, and disjoint from the immersed disks bounded by the symplectic bases in $\Sigma''$. Now applying the capped grope-to-Whitney tower construction of \cite[Thm.6]{S1} to these capped branches yields an order $2k-1$ Whitney tower $\cW$ on immersed surfaces $S_i$ each bounded by $L_i$ such that all Whitney disks and singularities of $\cW$ are contained
in a neighborhood of the capped branches, and with $\Sigma_i''\subset S_i$ for each $i$.
In particular, a symplectic basis on each $S_i$ bounds immersed disks whose interiors are contained in the complement of $\cW$. 

The proof is completed by the following proposition, which mildly generalizes Theorem~\ref{thm:Milnor invariant} and in particular implies that $L$ as above has vanishing Milnor invariants of all orders $\leq 2k-2$. 
\end{proof}

\begin{prop}\label{prop:mu=tau-on-immersed-surfaces}
Theorem~\ref{thm:Milnor invariant} holds for
an order $n$ twisted Whitney tower $\cW\subset B^4$ on order zero immersed surfaces $S_i$ bounded by
$L$ such that a symplectic basis of curves
on each $S_i$ bounds immersed disks in the complement of $\cW$: The Milnor invariants $\mu_k(L)$ vanish for $k<n$, and $\mu_n(L)=\eta_n\circ\tau^\iinfty_n(\cW)$.
\end{prop}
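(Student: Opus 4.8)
The plan is to follow the proof of Theorem~\ref{thm:Milnor invariant} in Section~\ref{sec:Milnor-thm-proof} almost verbatim, the only new input being that the extra genus of the order zero surfaces $S_i$ can be absorbed into the grope-duality argument. As in the opening reductions of Section~\ref{sec:Milnor-thm-proof}, I would first dispose of the order zero case and the case of slice links, so that we may assume $\cW$ has positive order and non-empty intersection forest. After splitting $\cW$ as in Section~\ref{subsec:split-w-towers}, choose on each $S_i$ a symplectic basis of circles $\{c_{i,\ell}, c'_{i,\ell}\}$, each curve of which bounds an immersed disk ($\Delta_{i,\ell}$, resp.\ $\Delta'_{i,\ell}$) with disjointly embedded collar and interior in $B^4\smallsetminus\cW$, as provided by hypothesis. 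Applying Lemma~\ref{lem:twisted-tower-to-grope} to the Whitney-disk part of $\cW$ converts $\cW$ into a dyadic class $n+1$ twisted capped grope $G^c$ whose bottom stages are the $S_i$: the $0$-surgeries along Whitney-disk boundary arcs only add new genus to the $S_i$ and are supported in a small neighborhood of $\cW$, so after a small isotopy the immersed disks remain disjoint from $G^c$ except for their boundaries on the bottom stages, and $t(G^c)\cong t(\cW)$.

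The first substantive step is the analogue of Lemma~\ref{lem:grope-duality}: the inclusion $S^3\smallsetminus L\hookrightarrow X:=B^4\smallsetminus G^c$ induces an isomorphism $\pi_1(S^3\smallsetminus L)/\pi_1(S^3\smallsetminus L)_{n+2}\cong\pi_1(X)/\pi_1(X)_{n+2}$. By Dwyer's theorem \cite{Dw} this reduces, just as in Section~\ref{subsec:grope-duality-lemma-proof}, to checking that the inclusion is an isomorphism on $H_1$ (both groups are generated by meridians to the bottom stages, which match up under inclusion) and that $H_2(X, S^3\smallsetminus L)$ is generated by maps of closed gropes of class at least $n+2$. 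The relative $H_2$-generators coming from the caps of $G^c$ are handled exactly as in Section~\ref{subsec:grope-duality-lemma-proof}, using the capped-grope-duality construction of \cite{KT} and, for twisted caps, the observation that $x_J$ realizes the iterated commutator determined by $J$. The only new generators are ``handle tori'' $T_{i,\ell}\subset X$, one for each genus handle of $S_i$, each carrying a pair of dual circles isotopic in $X$ to parallel push-offs of $c_{i,\ell}$ and $c'_{i,\ell}$. Since these push-offs bound the immersed disks $\Delta_{i,\ell}$ and $\Delta'_{i,\ell}$ in $X$, and a null-homotopic loop bounds a map of a grope of arbitrarily high class, each $T_{i,\ell}$ extends to a map of a closed grope of class $\geq n+2$, as needed.

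Granting this, the longitude computation of Section~\ref{subsec:computing-the-longitudes} goes through with one extra bookkeeping point. A parallel push-off of $L_i$ bounds a parallel push-off of the framed surface $S_i$ in $X$, so $\gamma_i$ factors as a product of the branch-boundary and cap-meridian factors $\gamma_{i_r}$ analyzed in Sections~\ref{subsec:main-proof-order-1}--\ref{subsubsec:general-twisted-case} together with one commutator $[\gamma_{c_{i,\ell}},\gamma_{c'_{i,\ell}}]$ for each dual pair in the chosen symplectic basis of $S_i$, where $\gamma_{c_{i,\ell}}$ is represented by a parallel push-off of $c_{i,\ell}$. Each $\gamma_{c_{i,\ell}}$ is trivial in $\pi_1(X)$ because $c_{i,\ell}$ bounds the immersed disk $\Delta_{i,\ell}$ with interior in $X$, so all of these extra commutators vanish, and $\gamma_i$ agrees modulo $\pi_1(X)_{n+2}$ with the longitude computed purely from the Whitney-tower part. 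The computations of Sections~\ref{subsec:main-proof-order-1} and \ref{subsubsec:general-twisted-case} then give $\gamma_i\in\pi_1(X)_{n+1}$ --- whence $\mu_k(L)=0$ for $k<n$ --- and identify $\mu_n^i(L)$ with the sum of the iterated commutators obtained by rooting the trees of $t(\cW)$ at their $i$-labeled vertices; summing over $i$ yields $\mu_n(L)=\eta_n\circ\tau^\iinfty_n(\cW)$, exactly as in Theorem~\ref{thm:Milnor invariant}.

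I expect the main obstacle to be the careful verification of the Dwyer hypothesis in the presence of \emph{immersed} rather than embedded disks: correctly identifying the handle tori $T_{i,\ell}$ as the extra relative $H_2$-generators (keeping track of how the $0$-surgeries in Lemma~\ref{lem:twisted-tower-to-grope} interact with the genus of $S_i$) and being precise about the claim that the immersed disk $\Delta_{i,\ell}$ supplies a map of a closed grope of arbitrarily high class with attaching region in the complement of $G^c$. Everything downstream is a routine transcription of Section~\ref{sec:Milnor-thm-proof}.
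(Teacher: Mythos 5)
Your proposal follows the same architecture as the paper's proof — reduce to the positive-order, non-slice case, convert $\cW$ to a twisted capped grope via Lemma~\ref{lem:twisted-tower-to-grope} (leaving the basis curves of the $S_i$ uncapped), rerun the Dwyer/grope-duality argument, and observe that the longitude computation is unchanged because the basis curves are null-homotopic in $\pi_1(B^4\setminus G^c)$. The longitude bookkeeping in your last paragraph of argument matches the paper exactly. However, the crucial step you flag as the ``main obstacle'' is precisely where your description is off, and as written it is a genuine gap.

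The new $H_2$-generators coming from the genus of the $S_i$ are \emph{not} ``handle tori, one per genus handle, with a pair of dual circles isotopic to push-offs of $c_{i,\ell}$ and $c'_{i,\ell}$.'' No such torus is naturally present in $B^4\setminus G^c$, and the count is also off: by Alexander duality there is one new generator for each \emph{basis curve}, i.e.\ two per genus handle, not one. The correct objects (which is what the paper uses) are the normal circle-bundle tori: for each basis curve $c$ on $S_i$, take the torus $T_c$ consisting of fibers of the normal circle bundle of $S_i$ restricted to $c$. Its two dual circles are (a) a meridian to $S_i$, which bounds a small normal disk meeting $S_i$ once and thus exhibits the Alexander duality of $T_c$ to the \emph{other} basis curve $c'$ of the handle, and (b) a parallel push-off of $c$ itself, which by hypothesis bounds an immersed disk in $B^4\setminus G^c$. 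Surgering $T_c$ along circle (b) using that immersed disk produces an immersed $2$-sphere in $B^4\setminus G^c$, which is a grope of arbitrarily high class; this is what verifies Dwyer's hypothesis. Your version implicitly needs \emph{both} dual circles of a single torus to be push-offs of basis curves, which is not what the normal tori give you, and without that the ``surger one side'' move you rely on doesn't apply. Replacing your ``handle tori'' with the normal tori over individual basis curves, and noting that only the push-off circle (not the meridian) needs to bound an immersed disk in the complement, closes the gap; everything else in your proposal is a faithful rendering of the paper's argument.
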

\begin{proof}
We work through each step of the proof of Theorem~\ref{thm:Milnor invariant} given in Section~\ref{sec:Milnor-thm-proof}, checking that the assertions still hold when the order $0$ disks bounded by the link components are replaced by the surfaces $S_i$:

The twisted Whitney tower $\cW$ is resolved to a twisted capped grope $G^c$ just as in the proof of Lemma~\ref{lem:twisted-tower-to-grope}
except that the bases of curves from the $S_i$ are left uncapped. Note that $G^c$ does not really have class $n+1$ because no higher grope stages are attached to these basis curves;
however, we will see that the proof still goes through since
these curves bound immersed disks in $B^4\setminus G^c$.

To see that Lemma~\ref{lem:grope-duality} still holds, the only new point that needs to be checked in the proof given in Section~\ref{subsec:grope-duality-lemma-proof}
is that the new generators of $H_2(B^4\setminus G^c)$ which are Alexander dual to the basis curves on the $S_i$ are represented by maps of gropes of class at least $n+2$. These new generators are in fact represented by maps of $2$--spheres (which are gropes of arbitrary high class): A torus consisting of the union of circle fibers in the normal circle bundle over a basis curve contains a dual pair of circles, one of which is a meridian to $S_i$ (and bounds a normal disk to the basis curve, exhibiting Alexander duality), while the other circle (which is parallel to the basis curve) bounds by assumption an immersed disk in the complement of $G^c$. Therefore, each such torus can be surgered to an immersed $2$--sphere in $B^4\setminus G^c$.

It only remains to check that the computation of the link longitudes in Section~\ref{subsec:computing-the-longitudes}
still corresponds to the composition $\eta_n(\tau^\iinfty_n(\cW))$. 
But this is clear since all the basis curves from the $S_i$ represent trivial elements in 
$\pi_1(B^4\setminus G^c)$. 
\end{proof}




   \end{document}